\pgfplotsset{compat=1.16}
\DeclareRobustCommand{\ccong}{\mathrel{\mathpalette\@verequiv\sim}}
\newcommand{\@verequiv}[2]{%
  \lower.5\p@\vbox{
    \lineskiplimit\maxdimen
    \lineskip-.5\p@
    \ialign{%
      $\m@th#1\hfil##\hfil$\crcr
      #2\crcr
      \equiv\crcr
    }%
  }%
}
\newtheorem{theorem}{Theorem}[section]
\newtheorem{proposition}[theorem]{Proposition}
\newtheorem{lemma}[theorem]{Lemma}
\newtheorem{corollary}[theorem]{Corollary}
\newtheorem{remark}[theorem]{Remark}
\title{Stability of partially congested travelling wave solutions for the extended Aw-Rascle system}
\date{\today}
\author{Émile Deléage\thanks{Aix Marseille Univ., CNRS, Centrale Marseille,  I2M UMR CNRS 7373, 13000 Marseille, France and Univ. Grenoble Alpes, CNRS, INRAE, IRD, Grenoble INP, IGE, 38000 Grenoble, France; emile.deleage@univ-amu.fr}, Muhammed Ali Mehmood\thanks{Imperial College London, London, United Kingdom; muhammed.mehmood21@imperial.ac.uk}}
\begin{document}

\maketitle

\begin{abstract}
We prove the non-linear stability of a class of travelling-wave solutions to the extended Aw-Rascle system with a singular offset function, which is formally equivalent to the compressible pressureless Navier-Stokes system with a singular viscosity. These solutions encode the effect of congestion by connecting a congested left state to an uncongested right state, and may also be viewed as approximations of solutions to the ‘hard-congestion model'. By using carefully weighted energy estimates we are able to prove the non-linear stability of viscous shock waves to the Aw-Rascle system under a small zero integral perturbation, which in particular extends previous results that do not handle the case where the viscosity is singular. 
\end{abstract}

\bigskip
\noindent{\bf Keywords:} Aw-Rascle system, compressible Navier-Stokes equations, singular limit, viscous shock waves, nonlinear stability.
	
\medskip
\noindent{\bf MSC:} 35Q35, 35L67, 35B25, 76T20.

\section{Introduction}

\subsection{Presentation of the model}

We study the following generalised Aw-Rascle system on the real line:
\begin{equation} \label{ARoriginal}
    \left\{
    \begin{aligned}
    &\partial_t \rho +\partial_y(\rho u)=0,\\
    &\partial_t(\rho w)+\partial_y (\rho u w)=0.
    \end{aligned}
    \right.
\end{equation}
Here, the quantities $\rho$ represents the density while $u$ and $w$ respectively refer to the actual and desired velocities of agents. 
 This system was originally coined in 2000 by Aw and Rascle \cite{aw2000resurrection} and has popularly been used to model the evolution of a system of interacting agents, such as the flow of vehicular traffic \cite{sheng_concentration_2022, HCL, HCLMehmood} or crowd dynamics \cite{aceves2023pedestrian}. The standard Aw-Rascle system is complemented by the relation $w = u + P(\rho)$, where $P=P(\rho)$ is known as the `offset' function.
In this paper, we consider the case where $w = u+\partial_y p_\epsilon(\rho)$, with $p_\epsilon$ being a singular function of the density depending on a parameter $\epsilon$. More precisely, for $\epsilon>0$ fixed, we take
\begin{equation} \label{offsetINTRO}
    w= u+ \partial_{y}p_\epsilon(\rho) =u+ \partial_{y} \left(\epsilon \left(\frac{\rho}{1-\rho}\right)^\gamma \right),\qquad \gamma \geq 1.
\end{equation}
Note that the singularity as $\rho$ approaches $1$ in \eqref{offsetINTRO} is physically significant, since it implies that the density of agents within the system may not exceed a maximal packing constraint, i.e. $\rho < 1$. In one-dimension it is interesting to note that the system may be formally rewritten as 
\begin{equation}
\label{pressurelessns}
    \left\{
    \begin{aligned}
        &\partial_t \rho +\partial_y(\rho u)=0,\\
        &\partial_t(\rho u)+\partial_y(\rho u^2)-\partial_y(\rho^2p_\epsilon'(\rho)\partial_y u)=0,
    \end{aligned}
    \right.
\end{equation}
which resembles the one-dimensional compressible pressureless Navier-Stokes equations with a singular degenerate diffusion coefficient $\lambda_\epsilon := \rho^2p_\epsilon'(\rho)$.

The system \eqref{pressurelessns} with \eqref{offsetINTRO} was rigorously derived by Lefebvre-Lepot and Maury in \cite{lefebvre_maury} from a microscopic lubrication model, and describes the evolution of particles suspended in a viscous fluid that interact with each other via a lubricating force. The diffusion coefficient obtained by Lefebvre-Lepot and Maury is $\epsilon(1-\rho)^{-1}$, where $\epsilon$ is the viscosity of the interstitial fluid. The case $\gamma = 1$ has also proven to be physically relevant in applications. Indeed, the diffusion coefficient of dense granular suspensions was experimentally measured to behave like $(\phi_c-\phi)^{-2}$ as the solid volume fraction $\phi$ (which corresponds to the non-dimensionalised density) approaches the maximal volume fraction $\phi_c$ (see for instance \cite{guazzelli2018rheology} for a review). 

Let us now go back to the momentum equation of \eqref{pressurelessns}. In the region where $\rho$ is far from 1, the viscosity $\lambda_\epsilon$ vanishes uniformly as $\epsilon$ goes to 0 and \eqref{pressurelessns} formally degenerates towards the pressureless gas system \cite{berthelin2002existence,boudin2000solution}. On the other hand, in the congested region, $\rho$ is very close to 1, and the singularity $(1-\rho)^{-(\gamma+1)}$ compensates the degeneracy in $\epsilon$. The limit $\epsilon \to 0$ in model \eqref{pressurelessns}-\eqref{offsetINTRO} (see \cite{HCL}) is then viewed as a transition towards a granular suspension model, where the interacting force is governed by the contact between the solid grains. The Aw-Rascle system with this choice of offset function was investigated by the authors of \cite{HCL} on a one-dimensional periodic domain, where the global well-posedness for fixed $\epsilon$ was studied, as well as the limit $\epsilon \to 0$, which is known as the ‘hard-congestion limit'. The authors demonstrated that up to a subsequence, solutions of \eqref{ARoriginal} with \eqref{offsetINTRO} converge towards weak solutions of the ‘hard-congestion model' (see also \cite{HCLMehmood,bresch2014singular}):
\begin{subequations} \label{HCL}
\newcommand{\mystrut}{\vphantom{\pder{}{}}}
\begin{numcases}{}
    \partial_{t} \rho + \partial_{y} (\rho u) = 0, \label{HCL-L1}   \\[1ex]
    \partial_{t} (\rho  u + \partial_{y}\pi) + \partial_{y}((\rho u + \partial_{y}\pi)u)  = 0,  \label{HCL-L2} \\[1ex]
    0 \le \rho \le 1,~ (1-\rho)\pi = 0,~ \pi \ge 0. \label{HCL-L3}
\end{numcases}
\end{subequations}   In this sense, the original system \eqref{ARoriginal} may be referred to as an approximation of \eqref{HCL}. System \eqref{HCL} is an example of a free-congested system, where the free phase refers to the region where $\rho < 1$ and the congested region is where $\rho = 1$. The potential $\pi$ is obtained in the limit and can be viewed as a Lagrange multiplier associated with the incompressibility constraint $\partial_{x}u=0$ which holds in the congested region. The existence of strong solutions to the system \eqref{HCL} is still not known and there are also no results on the stability of non-trivial solutions (i.e. where $\pi$ is not identically $0$). Note however that the existence of weak and measure-valued (duality) solutions to this model was recently obtained on the real line in \cite{duality2024HCL}. We also refer to \cite{chaudhuri_analysis_2022,locallycosntrainedAR,resonanceAr} and the references therein for further examples of the theoretical and numerical analysis pertaining to the Aw-Rascle system.

In the present work, we are concerned with the stability of a specific class of solutions to the system \eqref{ARoriginal}, namely the solutions $(\rho, u)$ which are travelling waves that connect a congested left state ($\rho = 1$) to a non-congested right state ($\rho < 1$). Since systems \eqref{ARoriginal} and \eqref{pressurelessns} are equivalent (for sufficiently regular solutions), our task is closely related to the stability of travelling wave solutions to the compressible Navier-Stokes system, which has been studied by Dalibard and Perrin in \cite{dalibard2019existence}. There, the authors studied system \eqref{pressurelessns} with the addition of a pressure and a constant viscosity coefficient  $\mu>0$. A similar study was also carried out in \cite{vasseur2016nonlinear} by Vasseur and Yao. Both of these works make use of a new formulation of the system, which is obtained by introducing the ‘effective velocity' $w$ and rewriting the system in terms of $(w,v)$. The parabolic equation satisfied by $v$ and the transport structure for $w$ then allows for desirable energy estimates, which are carried out with the help of integrated variables (see Equation \eqref{integratedvariables} below). Rewriting the system using the effective velocity has also proven to be advantageous when investigating the existence and uniqueness of weak/strong solutions to the compressible Navier-Stokes system with a density-dependent viscosity of the form $\mu(\rho) = \rho^{\alpha}$ for $\alpha > 0$ (see \cite{Burtea_2020,Constantin_2020} for example).

 To the best of our knowledge, there are no known results concerning the stability of shock waves where the viscosity coefficient is singular. The interest in such a result is twofold. Firstly, the stability of strong solutions to \eqref{ARoriginal} is significant due to the equivalence with the compressible Navier-Stokes system \eqref{pressurelessns}, for which such a result does not exist in the literature as far as we know. On the other hand, a stability result for partially congested solutions would imply that the system \eqref{HCL} is also expected to be stable, which provides additional validity for the model \eqref{HCL} and further verifies the need for a rigid well-posedness theory. Note that in our case the presence of a singular, degenerate viscosity coefficient and the lack of pressure prevents us from using the arguments of \cite{dalibard2019existence,vasseur2016nonlinear} to obtain the estimates required to prove the global existence and stability of travelling wave solutions to \eqref{ARoriginal}. Nonetheless, we demonstrate in this paper that through a careful choice of weighted energy estimates and the identification of good unknowns taking congestion into account, we can obtain results analogous to those of \cite{dalibard2019existence} for the Aw-Rascle system \eqref{ARoriginal} with the singular offset function \eqref{offsetINTRO}.

 Let us now give an outline of the paper. In the next subsection we detail our main results, which are the existence of travelling wave solutions to System \eqref{ARoriginal}-\eqref{offsetINTRO},  and the nonlinear stability of these solutions. Then in Section \ref{sec:prelim} we introduce basic properties of travelling wave solutions, an integrated formulation of the system and some useful preliminary estimates. The bulk of our work goes into Section \ref{sectionGWP}, where we obtain the well-posedness of the integrated system. Lastly, we prove the equivalence between the integrated system and the original system in Section \ref{sec:ASYMstab}. 

\subsection{Main results} \label{sec:mainresults}We now mention our main results. Let the Lagrangian mass coordinate $x$ be such that $\mathrm{d}x=\rho \mathrm{d}y-\rho u\mathrm{d}t$, and $v:=1/\rho$ the specific volume. Then  \eqref{pressurelessns}
 becomes
\begin{equation}
\label{lagrangian}
    \left\{
    \begin{aligned}
        &\partial_t v -\partial_x u=0,\\
        &\partial_t u -\partial_x(\phi_\epsilon(v)\partial_x u)=0,
    \end{aligned}
    \right.
\end{equation}
where \begin{equation}
\label{diffusioncoefficient}
    \phi_\epsilon(v) := \frac{p_\epsilon'(1/v)}{v^3} =\frac{\epsilon\gamma}{v(v-1)^{\gamma+1}}, \qquad\mathrm{such}\,\,\,\mathrm{that}\qquad \partial_y p_\epsilon(\rho) = -\phi_\epsilon(v)\partial_x v.
\end{equation}
In these coordinates, it follows that 
\begin{equation*}
    w = u +\partial_y p_\epsilon(\rho) = u - \phi_\epsilon(v) \partial_x v
\end{equation*}
is constant, i.e. solves $\partial_t w =0$.
\begin{remark}
All of our results remain true if we replace \eqref{offsetINTRO} by  
\begin{equation*}
    w = u+ \partial_y \tilde{p}_\epsilon(\rho) = u +\partial_y\left(\frac{\epsilon f(\rho)}{(1-\rho)^{\gamma}}\right),\qquad \gamma \geq 1,
\end{equation*}
where $f$ is smooth on $\mathbb{R}_+^*$ and such that $\gamma f(\rho)+(1-\rho)f'(\rho)>0$, i.e. such that $\tilde{p}_\epsilon'(\rho)>0$. In this case, one obtains that 
\begin{equation}
\label{generalisedviscosity}
\phi_\epsilon(v) = \epsilon v^{\gamma-3}\frac{(v-1)f'(1/v) + \gamma f(1/v)}{(v-1)^{\gamma+1}}.    
\end{equation}
In order to improve readability and without loss of generality, we will stick to the case $f(\rho) = \rho^{\gamma}$, for which the coefficient $\phi_\epsilon$ can be written in the more compact form \eqref{diffusioncoefficient}. This is also the form considered in \cite{HCL}. All computations are similar (but heavier) in the general case \eqref{generalisedviscosity}.
\end{remark}

Our first lemma establishes the existence of travelling wave solutions, and gives a quantitative description of the profile.
\begin{proposition}
    \label{travelingsolution}
    Let $1=v_- < v_{+}$ and $ u_{-} > u_{+}$ be real numbers. Then there exists a unique (up to a translation) travelling wave solution $(\mathbf{u},\mathbf{v})(t,x) = (u_{\epsilon}, v_{\epsilon})(\xi)$ of \eqref{lagrangian}, complemented with the far field condition $(\mathbf{u},\mathbf{v})\rightarrow(u_\pm,v_\pm)$ as $\xi \to\pm\infty$, where $\xi:= x-st$ and $s$ is the shock speed which satisfies
    \begin{equation}
        s = \frac{u_{-}-u_{+}}{v_{+}-1}.
    \end{equation}
    The solution is a smooth monotone increasing function connecting 1 (at $-\infty$) to $v_+$ (at $+\infty$). 
    
    By setting $v_\epsilon(0) =(1+v_+)/2$, one then has the following estimates:
    \begin{itemize}
        \item In the congested region $(\xi<0)$,
    \begin{equation}
    \label{doublebound}
        1 + \left(B-\frac{A_0}{\epsilon}\xi\right)^{-1/\gamma} \leq v_\epsilon(\xi) \leq 1+\left(B-\frac{A_1}{\epsilon}\xi\right)^{-1/\gamma},
    \end{equation}
    where
    \begin{equation*}
        A_0:= \frac{s(v_+-1)(v_+ +1)}{2}, \qquad  A_1:= \frac{s(v_+-1)}{2}, \qquad \mathrm{and} \qquad B:= \left(\frac{2}{v_+-1}\right)^\gamma.
    \end{equation*}
    \item In the free region $\xi>0$,
    \begin{equation}
    \label{doublebound2}
        v_+-\frac{v_+-1}{2}\exp\left(-\frac{A_2}{\epsilon}\xi\right) \leq v_\epsilon(\xi) \leq v_+-\frac{v_+-1}{2}\exp\left(-\frac{A_3}{\epsilon}\xi\right),
    \end{equation}
    where 
    \begin{equation*}
        A_2:= \frac{s(v_++1)(v_+-1)^{\gamma+1}}{2^{\gamma+2}\gamma}\qquad \mathrm{and} \qquad A_3 :=  \frac{s v_+(v_+-1)^{\gamma+1}}{\gamma}.
    \end{equation*}
    \end{itemize}
    It follows that $v_\epsilon$ converges to the shock wave $v(\xi):= \mathbf{1}_{\xi<0}+v_+\mathbf{1}_{\xi>0}$ as $\epsilon\rightarrow 0$, and it holds
    \begin{equation}
        \label{asymptoticbound}
        v_\epsilon(\xi)\leq \left[1+\left(B-\frac{A_1}{\epsilon}\xi\right)^{-1/\gamma}\right]\mathbf{1}_{\xi<0}+v_+\mathbf{1}_{\xi\geq 0}.
    \end{equation}
\end{proposition}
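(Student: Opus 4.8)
The plan is to reduce the travelling-wave system to a single scalar first-order autonomous ODE for $v_\epsilon$. Substituting $(\mathbf{u},\mathbf{v})(t,x)=(u_\epsilon,v_\epsilon)(\xi)$ with $\xi = x-st$ into the first equation of \eqref{lagrangian} gives $u_\epsilon' = -s v_\epsilon'$; integrating and matching the two far-field states forces $u_\epsilon = u_+ + s(v_+-v_\epsilon)$ together with the Rankine--Hugoniot relation $s = (u_--u_+)/(v_+-1)$. Inserting this into the momentum equation, one integration gives $s v_\epsilon + \phi_\epsilon(v_\epsilon)\,v_\epsilon' = \mathrm{const}$, and the constant must equal $s v_+$ for the right far-field state to be attained (there $\phi_\epsilon(v_+)$ is finite and $v_\epsilon'\to 0$); equivalently,
\begin{equation*}
  v_\epsilon' \;=\; \frac{s\,(v_+-v_\epsilon)}{\phi_\epsilon(v_\epsilon)} \;=\; \frac{s}{\epsilon\gamma}\,(v_+-v_\epsilon)\,v_\epsilon\,(v_\epsilon-1)^{\gamma+1} \;=:\; F(v_\epsilon).
\end{equation*}
On $(1,v_+)$ the function $F$ is smooth and strictly positive (note $s>0$ since $u_->u_+$), so every solution is strictly increasing, and $v=1$, $v=v_+$ are its only equilibria.

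I would then obtain the profile as a heteroclinic orbit of $v'=F(v)$. Since $F$ is locally Lipschitz on $[1,v_+]$ with $F(1)=F(v_+)=0$, the Cauchy problem with the normalisation $v_\epsilon(0)=(1+v_+)/2$ has a unique maximal solution, which by uniqueness cannot meet the constant solutions $v\equiv 1$ and $v\equiv v_+$; hence it remains in the open interval $(1,v_+)$, is global in $\xi$, and being monotone and bounded converges as $\xi\to\pm\infty$ to equilibria, necessarily $1$ and $v_+$ respectively. Uniqueness up to translation is immediate since every such profile parametrises this same orbit, and one then checks that the pair $(u_\epsilon,v_\epsilon)$ so constructed solves \eqref{lagrangian} with the prescribed limits.

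The quantitative estimates come from sandwiching $F$ between constant-coefficient model equations on each side of $\xi=0$. For $\xi<0$ one has $v_\epsilon\in(1,(1+v_+)/2)$; writing $w_\epsilon := v_\epsilon-1$ and using $v_+-v_\epsilon\in\bigl((v_+-1)/2,\,v_+-1\bigr)$ and $v_\epsilon\in\bigl(1,(v_++1)/2\bigr)$ gives $\tfrac{A_1}{\epsilon\gamma}\,w_\epsilon^{\gamma+1}\le w_\epsilon'\le \tfrac{A_0}{\epsilon\gamma}\,w_\epsilon^{\gamma+1}$; since $w_\epsilon' w_\epsilon^{-(\gamma+1)} = -\tfrac1\gamma\tfrac{d}{d\xi}\bigl(w_\epsilon^{-\gamma}\bigr)$, integrating from $\xi$ to $0$ (where $w_\epsilon^{-\gamma}=B$) yields \eqref{doublebound}. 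For $\xi>0$ one has $v_\epsilon\in((1+v_+)/2,v_+)$; writing $z_\epsilon := v_+-v_\epsilon$ and bounding $v_\epsilon-1$ and $v_\epsilon$ from above and below gives the linear differential inequality $-\tfrac{A_3}{\epsilon}\,z_\epsilon\le z_\epsilon'\le -\tfrac{A_2}{\epsilon}\,z_\epsilon$, and a Gronwall integration from $0$ (where $z_\epsilon=(v_+-1)/2$) to $\xi$ gives \eqref{doublebound2}. Sending $\epsilon\to 0$ in these bounds yields the pointwise convergence $v_\epsilon\to \mathbf{1}_{\xi<0}+v_+\mathbf{1}_{\xi>0}$, while combining the upper bound in \eqref{doublebound} with the trivial bound $v_\epsilon\le v_+$ on $\{\xi\ge 0\}$ gives \eqref{asymptoticbound}.

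The step requiring the most care is the behaviour near the congested equilibrium $v=1$, where $\phi_\epsilon$ is singular: one must see that $F$ vanishes there to order $\gamma+1\ge 2$, so the model equation $w'\sim w^{\gamma+1}$ has solutions that decay only algebraically, $w\sim|\xi|^{-1/\gamma}$, reaching $v=1$ only as $\xi\to-\infty$. This algebraic, singular-but-integrable behaviour is precisely what produces the slow $|\xi|^{-1/\gamma}$ tail in the congested region, in contrast with the exponential tail on the free side.
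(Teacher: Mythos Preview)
Your proof is correct and follows essentially the same approach as the paper: reduce to the scalar ODE $v_\epsilon' = s(v_+-v_\epsilon)/\phi_\epsilon(v_\epsilon)$, invoke Cauchy--Lipschitz to get the monotone heteroclinic, and then sandwich the right-hand side between constant multiples of $(v_\epsilon-1)^{\gamma+1}$ for $\xi<0$ and of $(v_+-v_\epsilon)$ for $\xi>0$, integrating the resulting differential inequalities explicitly. The only cosmetic difference is your use of the auxiliary variables $w_\epsilon$ and $z_\epsilon$, which makes the argument slightly more readable; the content is identical.
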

\color{black}
With this in hand, we dedicate the rest of our effort towards studying the stability of the profiles $(u_{\epsilon}, v_{\epsilon})$ where $\epsilon < < 1$. We first express System \eqref{lagrangian} in term of the unknowns $v$ and $w=u-\phi_\epsilon(v)\partial_x v$:
\begin{equation}  \label{ARvw}
    \left\{
    \begin{aligned}
        &\partial_t w = 0,\\
        &\partial_t v -\partial_x w - \partial_x (\phi_\epsilon(v)\partial_x v)=0.
    \end{aligned}
    \right.
\end{equation}
In order to obtain effective energy estimates, we take inspiration from \cite{dalibard2019existence, vasseur2016nonlinear} and choose to re-write this system in terms of the integrated variables. Suppose we have initial data $(w_{0}, v_{0})$ such that $(w_{0} - (w_{\epsilon})(0), v_{0} - (v_{\epsilon})(0)) \in (L^{1}_{0}(\mathbb{R}) \cap L^{\infty}(\mathbb{R}))^{2}$ where $L^{1}_{0}(\mathbb{R})$ is the subset of $L^{1}(\mathbb{R})$ consisting of zero mean functions. Then we define the integrated initial data as
\begin{equation*}
    W_{0}(x) := \int_{-\infty}^{x} (w_{0}(z) - w_{\epsilon}(z))~dz, \quad V_{0}(x) := \int_{-\infty}^{x} (v_{0}(z) - v_{\epsilon}(z))~dz.
\end{equation*}
Supposing that $(w-w_{\epsilon}, v- v_{\epsilon})(t) \in L^{1}_{0}(\mathbb{R})$ holds true for positive times, we may also define the integrated variables
\begin{equation}
    \label{integratedvariables}
     W(t,x) := \int_{-\infty}^{x} (w(t,z) - w_{\epsilon}(t,z))~dz, \quad V(t,x) := \int_{-\infty}^{x} (v(t,z) - v_{\epsilon}(t,z))~dz.
\end{equation}  Integrating \eqref{ARvw} between $-\infty$ and $x$ formally, we see that $(W,V)$ solves 
\begin{equation} \label{INTEGRATEDsystem}
        \left\{
    \begin{aligned}
        &\partial_t W = 0,\\
        &\partial_t V -\partial_x W - \phi_\epsilon(v)\partial_x v+\phi_\epsilon(v_\epsilon)\partial_xv_\epsilon=0, \\ 
        &(W,V)(0, \cdot) = (W_{0}, V_{0}).
    \end{aligned}
    \right.
\end{equation}

Since $W$ is constant in time, we will denote it by its initial value $W_0$ from now on. The following result pertains to this system.
\begin{theorem}[Existence of a strong solution to the integrated system] \label{GWPintegrated} Let $T>0$ and $\gamma \ge 1$. Assume that $(W_{0}, V_{0}) \in (H^{2}(\mathbb{R}))^{2}$ and define 
    \begin{equation}
        \eta_0 := \frac{\partial_x V_0}{v_\epsilon|_{t=0}-1}.
    \end{equation}
    Suppose that $\eta_0 \in H^1(\mathbb{R})$, $\sqrt{x}\partial_x^k W_0 \in L^2(\mathbb{R}_+)$ for $k=0,1,2$, and that the following estimate holds:
    \begin{equation}
    \begin{aligned}
        \label{THMIntExIDassumption}
         \sum_{k=0}^2 \left(c_k \epsilon^{2k} E_{k}(0; V_{2})+\epsilon^{2k-1}\|\sqrt{x}\partial_x^k W_0\|_{L^2(\mathbb{R}_+)}^2\right) + \|W_0\|_{L^2_{(\mathbb{R})}}^2 + \left(\frac{T}{\epsilon}\right)^{1/\gamma}\left(\epsilon^2\|\partial_x W_0\|^2_{L^2_{(\mathbb{R})}}+\epsilon^{4} \|\partial_{x}^{2}W_{0}\|_{L^{2}_{(\mathbb{R})}}^{2}\right)  \leq \delta_0 \epsilon^3
    \end{aligned}
    \end{equation}
    \color{black}
    for some constants $\delta_{0}, c_0, c_1, c_2$ depending only on $s,\gamma, v_+$. Then there exists a unique solution $(W_0,V)$ to \eqref{INTEGRATEDsystem} on $(0,T)$ such that
    \begin{align*}
        &V \in C([0,T]; H^{2}(\mathbb{R})) \cap L^{2}(0,T; H^{3}(\mathbb{R})). 
    \end{align*} Moreover, there exists a constant $C = C(s,v_{+}, \gamma, \delta_{0})$ such that
    \begin{equation} \label{GWPintegratedfinalEST}
    \begin{aligned}
        \sup_{t \in [0,T]}  & \left(  \int_{\mathbb{R}} V^{2}~dx + \int_{0}^{t} \int_{\mathbb{R}} \phi_{\epsilon}(v_{\epsilon})|\partial_{x}V|^{2}~dxds \right) \\[1ex] &+ \sup_{t \in [0,T]} \left( \sum_{k=0}^{1}\epsilon^{2k+2} \left[  \int_{\mathbb{R}}  \left|\partial_{x}^{k}\left(\frac{\partial_{x}V}{v_{\epsilon}-1} \right)\right|^{2} ~dx + \int_{0}^{t} \int_{\mathbb{R}} \phi_{\epsilon}(v_{\epsilon})\left|\partial_{x}^{k+1}\left(\frac{\partial_{x}V}{v_{\epsilon}-1} \right)\right|^{2} ~dxds  \right]  \right) \le C \epsilon^{3}.
       \end{aligned}
    \end{equation}
\end{theorem}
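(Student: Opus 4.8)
The plan is to view the second line of \eqref{INTEGRATEDsystem} as a scalar quasilinear parabolic equation for $V$, with $W_0$ a time-independent source, to prove short-time well-posedness, and then to close a hierarchy of weighted energy estimates which both propagates \eqref{GWPintegratedfinalEST} and prevents the solution from touching the congestion constraint, so that the local solution extends to all of $[0,T]$. Concretely, since $\partial_x V = v - v_\epsilon$ and $w - w_\epsilon = \partial_x W_0$ is constant in time, \eqref{INTEGRATEDsystem} becomes
\begin{equation*}
    \partial_t V - \phi_\epsilon(v_\epsilon + \partial_x V)\,\partial_x^2 V = \partial_x W_0 + \big(\phi_\epsilon(v_\epsilon + \partial_x V) - \phi_\epsilon(v_\epsilon)\big)\,\partial_x v_\epsilon ,
\end{equation*}
which is uniformly parabolic as long as $v = v_\epsilon + \partial_x V$ stays in a fixed compact subset of $(1,+\infty)$: there $\phi_\epsilon$ is bounded above, while Proposition \ref{travelingsolution} gives $\phi_\epsilon(v_\epsilon) \ge c_0\epsilon > 0$ everywhere. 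Freezing the coefficient and the quadratic right-hand side at the previous iterate, and using the smoothness and the sharp profile bounds of Proposition \ref{travelingsolution}, a contraction mapping argument in $C([0,\tau];H^2) \cap L^2(0,\tau;H^3)$ (or a Galerkin scheme) yields, for some $\tau > 0$, a unique local solution with $v$ in such a compact set; uniqueness on the full interval will follow at the end from an $L^2$ estimate on the difference of two solutions and Gronwall's lemma, using $\partial_x^2 V \in L^\infty(0,T;L^2)$.

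\emph{Good unknown and the three-tier hierarchy.} Following the strategy announced in the introduction, I introduce the relative perturbation $\eta := \partial_x V/(v_\epsilon - 1)$, so that $v - 1 = (v_\epsilon - 1)(1 + \eta)$ and $\eta$ remains finite in the congested region; using the identity $\phi_\epsilon(v_\epsilon)\partial_x v_\epsilon = s(v_+ - v_\epsilon)$ satisfied by the profile and the estimates of Proposition \ref{travelingsolution} (algebraic decay of $v_\epsilon - 1$ and of $\partial_x v_\epsilon/(v_\epsilon - 1)$ for $\xi < 0$, exponential decay for $\xi > 0$), one derives the quasilinear parabolic equation for $\eta$ and then runs three tiers of estimates. \emph{Tier (i):} multiplying the $V$-equation by $V$ gives $\frac{d}{dt}\|V\|_{L^2}^2 + \int\phi_\epsilon(v_\epsilon)|\partial_x V|^2$, the source $\int V\,\partial_x W_0$ being absorbed into the dissipation after integration by parts, with $\phi_\epsilon(v_\epsilon) \ge c_0\epsilon$ and a Hardy-type inequality on $\mathbb{R}_+$ against $\sqrt x\,W_0$, and the remaining nonlinear terms quadratic in $\eta$ and absorbable. \emph{Tier (ii):} multiplying the $\eta$-equation by $\epsilon^2\eta$ gives $\epsilon^2\frac{d}{dt}\|\eta\|_{L^2}^2 + \epsilon^2\int\phi_\epsilon(v_\epsilon)|\partial_x\eta|^2$, the $W_0$-source now producing, via the profile bounds, a contribution of size $\epsilon^2(T/\epsilon)^{1/\gamma}\|\partial_x W_0\|_{L^2}^2$ — the factor $(T/\epsilon)^{1/\gamma}$ being the worst-case size of $1/(v_\epsilon - 1)$ at a fixed Lagrangian point swept a distance $\sim sT$ into the congested region along $\xi = x - st$. \emph{Tier (iii):} differentiating the $\eta$-equation once and multiplying by $\epsilon^4\partial_x^2\eta$ gives $\epsilon^4\frac{d}{dt}\|\partial_x\eta\|_{L^2}^2 + \epsilon^4\int\phi_\epsilon(v_\epsilon)|\partial_x^2\eta|^2$ together with commutator terms carrying $\partial_x v_\epsilon$, $\partial_x^2 v_\epsilon$, $\partial_x(\phi_\epsilon(v_\epsilon))$ and an analogous $W_0$-source of size $\epsilon^4(T/\epsilon)^{1/\gamma}\|\partial_x^2 W_0\|_{L^2}^2$. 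Each additional derivative carries a factor $\epsilon^2$ because, by Proposition \ref{travelingsolution}, the profile and all its $x$-derivatives live on the $O(\epsilon)$ length scale.

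\emph{Closing, continuation, and the main obstacle.} Summing the three tiers produces a differential inequality $\frac{d}{dt}\mathcal E + \mathcal D \le F + C\,\mathcal E^{1/2}\mathcal D + (\text{absorbable lower-order terms})$, with $\mathcal E = \|V\|_{L^2}^2 + \epsilon^2\|\eta\|_{L^2}^2 + \epsilon^4\|\partial_x\eta\|_{L^2}^2$, $\mathcal D$ the corresponding instantaneous sum of weighted dissipations, and $F$ gathering the $W_0$-sources with $\int_0^T F + \mathcal E(0)$ bounded by the left-hand side of \eqref{THMIntExIDassumption}, hence by $\delta_0\epsilon^3$. A continuity/bootstrap argument then shows that if $\mathcal E \le M\epsilon^3$ on a subinterval, with $M$ and then $\delta_0$ chosen small in terms of $s, v_+, \gamma$, one in fact gets $\mathcal E(t) + \int_0^t \mathcal D \le \tfrac{M}{2}\epsilon^3$, and moreover $\|\eta\|_{L^\infty}^2 \lesssim \|\eta\|_{L^2}\|\partial_x\eta\|_{L^2} \lesssim \epsilon^{-3}\mathcal E \lesssim M \ll 1$, so $v = v_\epsilon(1+\eta) + \ldots$ stays in a fixed compact subset of $(1,+\infty)$ and, via $\partial_x V = (v_\epsilon - 1)\eta$, $\partial_x^2 V = \eta\,\partial_x v_\epsilon + (v_\epsilon - 1)\partial_x\eta$ and the profile bounds, $\|V(t)\|_{H^2}$ stays finite; this rules out both blow-up of the $H^2$ norm and contact with the constraint, so the local solution extends to $[0,T]$, obeys \eqref{GWPintegratedfinalEST}, and has the stated $L^2(0,T;H^3)$ regularity once one reads off $\int_0^T\!\!\int\phi_\epsilon(v_\epsilon)|\partial_x^2\eta|^2 < \infty$. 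I expect the main difficulty to lie in Tier (iii): the $\eta$-equation already carries coefficients built from $v_\epsilon$, $\partial_x v_\epsilon \sim \epsilon^{-1}$ and $\phi_\epsilon$, all with delicate behaviour near congestion, and differentiating it produces a long list of commutator and nonlinear terms, each of which must be absorbed into a small multiple of the singular dissipation $\epsilon^4\int\phi_\epsilon(v_\epsilon)|\partial_x^2\eta|^2$ or into $\mathcal E^{1/2}\mathcal D$, every negative power of $v_\epsilon - 1$ being paid for by the sharp decay of Proposition \ref{travelingsolution}; keeping this bookkeeping consistent across the three tiers, together with the interplay between the singular weight $\phi_\epsilon(v_\epsilon)$ and the $(T/\epsilon)^{1/\gamma}$ factor, is the technical core of the argument.
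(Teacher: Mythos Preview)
Your overall strategy coincides with the paper's: the same good unknown $\eta = \partial_x V/(v_\epsilon-1)$, the same three-tier hierarchy on $(V,\eta,\partial_x\eta)$ with weights $1,\epsilon^2,\epsilon^4$, the same identification of the $(T/\epsilon)^{1/\gamma}$ contribution from $\partial_x W_0$ and $\partial_x^2 W_0$ via the profile bounds, and the same bootstrap $\|\eta\|_{L^\infty}^2 \lesssim \epsilon^{-3}\mathcal E \ll 1$ to keep $v>1$. The paper organises this slightly differently: instead of local existence plus continuation, it linearises as $\partial_t V - \partial_x(\phi_\epsilon(v_\epsilon)\partial_x V) = \partial_x W_0 + \partial_x H(\partial_x V)$, defines the Picard map $\mathcal A^\epsilon: V_1 \mapsto V_2$ on the ball $B_\delta = \{\|V\|_{\mathcal X} < \delta\epsilon^{3/2}\}$ in the space built from your $\mathcal E,\mathcal D$, and runs the very same three-tier estimates to prove directly that $\mathcal A^\epsilon$ maps $B_\delta$ into itself and is a contraction on $[0,T]$. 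Your continuation argument and the paper's global contraction are equivalent once the a priori estimates are in hand.

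There is one genuine imprecision in your local-existence step. You assert that the equation is uniformly parabolic as long as $v$ stays in a fixed compact subset of $(1,+\infty)$; but $v_\epsilon$ itself is not in any such set, since $v_\epsilon \to 1$ as $\xi\to-\infty$, and hence $\phi_\epsilon(v_\epsilon)$ (and $\phi_\epsilon(v)$) is \emph{unbounded above}. What one actually has is only $\phi_\epsilon(v) \ge c\epsilon$, so the operator is merely bounded below, and standard uniformly parabolic theory in $H^2$ does not apply directly. The paper addresses exactly this point (Appendix~A): passing to the moving frame $\xi = x-st$ to make the coefficient time-independent, writing the principal part as $-T^*T$ with $T = \phi_\epsilon^{1/2}\partial_\xi$ a closed densely defined operator, and invoking the self-adjointness of $T^*T$ to get an analytic semigroup, with $s\partial_\xi$ added as a relatively bounded perturbation. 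Your sketch would go through once you replace ``uniformly parabolic'' by this semigroup argument (or, equivalently, by working from the start in the weighted spaces $D_k$ rather than unweighted $H^2\cap H^3$).
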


\begin{remark}
    It follows from Proposition \ref{travelingsolution} that the coefficient of diffusion $\phi_\epsilon(v_\epsilon)$ (see \eqref{diffusioncoefficient} above) is singular and tends to $+\infty$ as $x$ tends to $-\infty$. As a consequence, we cannot close the estimates on $V$ in the classical Sobolev spaces $H^s$. This is why we work with the weighted quantity $\eta:=\partial_x V/(v_\epsilon-1)$ instead (see \eqref{defeta}), which yields better estimates. The drawback of this method is that the time-independent quantity $W_0$ cannot be bounded uniformly in time in such weighted spaces. Hence we only obtain local in time well-posedness when $W_0\neq 0$, with a time of existence $T$ proportional to $1/\|\partial_x W_0\|_{H^1}^{2\gamma}$. 
\end{remark}
\begin{remark} 
   The assumption $\sqrt{x} \partial_{x}^{k}W_0 \in L^{2}(\mathbb{R}_{+})$ for $k=0,1,2$ is classical for this kind of system and was already used in \cite{dalibard2021local}. However, it is possible to remove this assumption and only assume that $W_0\in H^2(\mathbb{R})$. One then obtains a shorter time of existence, proportional to  $1/\|W_0\|_{H^2}^2$ (see Remark \ref{REMARK_improvedWP} below).
\end{remark}

\color{black}
\begin{remark} \label{rmkgwpC}
     Using $v_{\epsilon} < v_{+}$ and the regularity of $v_\epsilon$, the bound \eqref{GWPintegratedfinalEST} implies that $V \in L^{\infty}(0,T; H^{2}(\mathbb{R})) \cap L^{2}(0,T; H^{3}(\mathbb{R}))$. Furthermore, by using the smallness assumption \eqref{THMIntExIDassumption},  we will show the lower bound $v>1$ for every $t,x$. In other words, the perturbation does not reach the congested state (see Remark \ref{stayuncongested}).
\end{remark}
Under the same assumptions, we also prove a stability result.
\begin{theorem}[Nonlinear stability of travelling wave solutions] \label{THMstability} Let $T>0$, $\gamma \ge 1$. Assume that the initial data $(u_{0}, v_{0})$ is such that \begin{equation} \label{THMstabilityIDassumptions}
    u_{0} - (u_{\epsilon})_{t=0} \in W^{1,1}_{0}(\mathbb{R}) \cap H^{1}(\mathbb{R}), \quad {\frac{\partial_x[u_0-(u_\epsilon)_{t=0}]}{v_\epsilon-1}\in L^2(\mathbb{R})}, \quad v_{0} - (v_{\epsilon})_{t=0} \in W^{2,1}_{0}(\mathbb{R}) \cap H^{2}(\mathbb{R}),
\end{equation} and the associated integrated initial data $(W_{0}, V_{0})$ satisfies \eqref{THMIntExIDassumption}. Then there exists a unique global solution $(u,v)$ to \eqref{ARoriginal} on $[0,T]$ which satisfies
\begin{align*}
    &u-u_{\epsilon} \in C([0,T]; H^{1}(\mathbb{R}) \cap L^{1}_{0}(\mathbb{R})), \\[1ex]
    &v-v_{\epsilon} \in C([0,T]; H^{1}(\mathbb{R}) \cap L^{1}_{0}(\mathbb{R})) \cap L^{2}(0,T; H^{2}(\mathbb{R})).
\end{align*}
     In particular, there exists a constant $C_{1}>0$ dependent on $\gamma, v_{+}, T$  such that \begin{equation*}
\|u-u_{\epsilon}\|_{L^{\infty}(0,T; H^{1}(\mathbb{R}))} + \|v-v_{\epsilon}\|_{L^{\infty}(0,T; H^{1}(\mathbb{R}))} + \|v-v_{\epsilon}\|_{L^{2}(0,T; H^{2}(\mathbb{R}))} \le C_{1}.
     \end{equation*} There also exists a constant $C_{2} > 0$ dependent on $\gamma, v_{+}, T, \epsilon$  such that
     \begin{equation*}
         \|u-u_{\epsilon}\|_{L^{\infty}(0,T; L^{1}(\mathbb{R}))} + \|v-v_{\epsilon}\|_{L^{\infty}(0,T; L^{1}(\mathbb{R}))} \le C_{2}.
     \end{equation*}
\end{theorem}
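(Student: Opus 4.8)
The plan is to deduce Theorem~\ref{THMstability} from Theorem~\ref{GWPintegrated} by reversing the integration procedure that produced \eqref{INTEGRATEDsystem}. First I would check that the hypotheses transfer. Given $(u_0,v_0)$ satisfying \eqref{THMstabilityIDassumptions}, set $w_0:=u_0-\phi_\epsilon(v_0)\partial_xv_0$ and form the primitives $W_0=\int_{-\infty}^{\cdot}(w_0-w_\epsilon|_{t=0})$ and $V_0=\int_{-\infty}^{\cdot}(v_0-v_\epsilon|_{t=0})$. The $W^{1,1}_0\cap H^1$ and $W^{2,1}_0\cap H^2$ assumptions on the perturbations, together with the smoothness and monotonicity of $v_\epsilon$ and the explicit decay in \eqref{doublebound}--\eqref{doublebound2}, are readily seen to give $(W_0,V_0)\in (H^2(\mathbb R))^2$, $\eta_0=\partial_xV_0/(v_\epsilon|_{t=0}-1)\in H^1(\mathbb R)$, and $\sqrt x\,\partial_x^kW_0\in L^2(\mathbb R_+)$ for $k=0,1,2$; the smallness inequality \eqref{THMIntExIDassumption} is assumed directly. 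Theorem~\ref{GWPintegrated} then yields a unique $(W_0,V)$ solving \eqref{INTEGRATEDsystem} with $V\in C([0,T];H^2)\cap L^2(0,T;H^3)$ and the bound \eqref{GWPintegratedfinalEST}.

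Next I would reconstruct the physical solution: set $v:=v_\epsilon+\partial_xV$, $w:=w_\epsilon+\partial_xW_0$, $u:=w+\phi_\epsilon(v)\partial_xv$. Differentiating the $V$-equation in \eqref{INTEGRATEDsystem} in $x$ and subtracting the identity satisfied by the travelling wave $(v_\epsilon,w_\epsilon)$ shows that $(v,w)$ solves \eqref{ARvw}, hence $(u,v)$ solves \eqref{lagrangian}, so that reverting to Eulerian coordinates via $\mathrm dx=\rho\,\mathrm dy-\rho u\,\mathrm dt$ with $\rho=1/v$ gives a solution $(u,v)$ of \eqref{ARoriginal}. This last step requires $v>1$, i.e. that the perturbation stays uncongested, which follows from the smallness assumption as in Remark~\ref{stayuncongested}; in fact smallness gives the stronger comparison $v-1\simeq v_\epsilon-1$, which I will need below. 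Uniqueness for \eqref{ARoriginal} follows from uniqueness in Theorem~\ref{GWPintegrated}, the bijectivity $(u_0,v_0)\leftrightarrow(W_0,V_0)$, and the reversibility of the coordinate change.

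It then remains to transfer the estimates. From $V\in C([0,T];H^2)\cap L^2(0,T;H^3)$ one gets $v-v_\epsilon=\partial_xV$ in $C([0,T];H^1)\cap L^2(0,T;H^2)$ with the claimed bounds. For $u-u_\epsilon=\partial_xW_0+\bigl(\phi_\epsilon(v)\partial_xv-\phi_\epsilon(v_\epsilon)\partial_xv_\epsilon\bigr)$, with $\partial_xW_0\in H^1$ time-independent, the task is to bound the viscous difference in $C([0,T];H^1)$. Decomposing it as $\phi_\epsilon(v)\partial_x(v-v_\epsilon)+\bigl(\phi_\epsilon(v)-\phi_\epsilon(v_\epsilon)\bigr)\partial_xv_\epsilon$ and re-expressing every factor through $\eta=\partial_xV/(v_\epsilon-1)$ and $\partial_x\eta$, I would bound the $L^2$ norms using \eqref{GWPintegratedfinalEST} for the $\eta$-quantities, the asymptotics \eqref{doublebound}--\eqref{doublebound2} for the singular prefactors $\phi_\epsilon(v_\epsilon)(v_\epsilon-1)^j$ and $\partial_xv_\epsilon$, and $v-1\simeq v_\epsilon-1$ to replace $\phi_\epsilon(v)$ by $\phi_\epsilon(v_\epsilon)$; this gives $u-u_\epsilon\in C([0,T];H^1)$ and the stated $L^\infty H^1$ bound. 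Finally, to legitimise the formal integration leading to \eqref{INTEGRATEDsystem} and to obtain the $L^\infty(0,T;L^1)$ bounds, I would propagate the $L^1_0$ structure: integrating the parabolic equation for $v-v_\epsilon$ in $x$ shows its flux $w-w_\epsilon+\phi_\epsilon(v)\partial_xv-\phi_\epsilon(v_\epsilon)\partial_xv_\epsilon$ vanishes at $\pm\infty$ (at $+\infty$ by $H^1$-decay and boundedness of $\phi_\epsilon$ there, at $-\infty$ by the estimates just obtained), so that $\int_{\mathbb R}(v-v_\epsilon)$ stays zero; a Kato-type $\operatorname{sgn}$-multiplier argument on the same equation, with the source terms in $L^1$ uniformly in $t$ by \eqref{THMstabilityIDassumptions} and the weighted bounds, closes the $L^1$ bound (and, upon differentiating, the $W^{1,1},W^{2,1}$ bounds), while $u-u_\epsilon=\partial_xW_0+\partial_x[\Phi_\epsilon(v)-\Phi_\epsilon(v_\epsilon)]$ with $\Phi_\epsilon'=\phi_\epsilon$ handles the $L^1$ and zero-mean properties of $u-u_\epsilon$.

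The main obstacle I anticipate is exactly this transfer of information between the integrated and non-integrated formulations in the congested region $\xi\to-\infty$, where $\phi_\epsilon(v_\epsilon)$ blows up: quantities such as $\phi_\epsilon(v)\partial_x^2V$ are finite only thanks to cancellations encoded in the weighted unknown $\eta$, and extracting them requires pairing the quantitative profile asymptotics of Proposition~\ref{travelingsolution} with the $\epsilon$-weighted bounds \eqref{GWPintegratedfinalEST} at precisely the right powers. A secondary difficulty is propagating the $L^1$-type bounds in the presence of this singular, degenerate diffusion, where the usual $L^1$-contraction estimates must be adapted near the congested end.
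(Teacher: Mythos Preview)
Your overall architecture (apply Theorem~\ref{GWPintegrated}, set $v=v_\epsilon+\partial_xV$, $w=w_\epsilon+\partial_xW_0$, $u=w+\phi_\epsilon(v)\partial_xv$, then propagate $L^1_0$) is correct, but the step where you extract $u-u_\epsilon\in C([0,T];H^1)$ by direct algebraic manipulation has a genuine gap. Writing
\[
u-u_\epsilon=\partial_xW_0+\phi_\epsilon(v)\partial_x^2V+(\phi_\epsilon(v)-\phi_\epsilon(v_\epsilon))\partial_xv_\epsilon
\]
and $\partial_x^2V=(v_\epsilon-1)\partial_x\eta+v_\epsilon'\eta$, the dangerous piece is $\phi_\epsilon(v_\epsilon)(v_\epsilon-1)\partial_x\eta$. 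The prefactor equals $\epsilon\gamma/\bigl(v_\epsilon(v_\epsilon-1)^\gamma\bigr)$, which by \eqref{doublebound} grows like $|\xi|$ as $\xi\to-\infty$; meanwhile the only $L^\infty_t$ control that \eqref{GWPintegratedfinalEST} gives on $\partial_x\eta$ is unweighted $L^2_x$, and the weighted bounds $\sqrt{\phi_\epsilon}\,\partial_x^k\eta\in L^2_{t,x}$ are only $L^2$ in time. No pairing of powers rescues this: you cannot conclude $u-u_\epsilon\in L^\infty_tL^2_x$, let alone $L^\infty_tH^1_x$, from the $V$-estimates alone. The obstacle you flag in your last paragraph is exactly this, and it is not merely technical.

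The paper's fix is the missing idea: do \emph{not} read $u-u_\epsilon$ off algebraically, but treat it as the solution of its own parabolic equation
\[
\partial_t(u-u_\epsilon)-\partial_x\bigl(\phi_\epsilon(v_\epsilon)\partial_x(u-u_\epsilon)\bigr)=\partial_x\bigl[(\phi_\epsilon(v)-\phi_\epsilon(v_\epsilon))\partial_xu\bigr],
\]
and run separate energy estimates. At $L^2$ level the source is absorbed using $\|\eta\|_{L^\infty}\lesssim\delta$. For $H^1$ one introduces a second weighted unknown $\mu:=\partial_x(u-u_\epsilon)/(v_\epsilon-1)$, derives its equation (same linear operator $\mathcal L_\epsilon$ as for $\eta$), and closes with the $V$-bounds and pointwise estimates on $\phi_\epsilon(v)-\phi_\epsilon(v_\epsilon)$ and its $x$-derivatives. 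The algebraic identity for $u-u_\epsilon$ is then used only to transfer continuity in time. The $L^1$ propagation also runs through $u-u_\epsilon$: the Kato $j_n$-argument is applied to the parabolic equations for $u-u_\epsilon$ and $\partial_x(u-u_\epsilon)$, and $v-v_\epsilon$ inherits its $L^1$ bound from $\partial_t(v-v_\epsilon)=\partial_x(u-u_\epsilon)$. Running Kato directly on the $v$-equation, as you propose, would again meet the unbounded diffusion in the congested zone.
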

As a consequence of the estimates derived during the course of proving the above theorem, we are also able to assert global existence with $T = + \infty$ and long-time stability if we remove the perturbation on the desired velocity.
\begin{corollary} \label{CORstability}
If we additionally assume that \begin{equation} \label{CORassumption}
    (u - u_{\epsilon})(0) = (\partial_{x}\phi_{\epsilon}(v) - \partial_{x}\phi_{\epsilon}(v_{\epsilon}))(0),
\end{equation} i.e. that $(w - w_{\epsilon})(0) = 0$, then the solution $(u,v)$ is defined on $\mathbb{R}_{+} \times \mathbb{R}$, satisfies \eqref{GWPintegratedfinalEST} with $T=+\infty$ and additionally
 \begin{equation*}
         \sup_{x \in \mathbb{R}} \left( |v-v_{\epsilon}|(t,x) + |u-u_{\epsilon}|(t,x)      \right) \to 0 \text{ as } t \to \infty.
     \end{equation*}
\end{corollary}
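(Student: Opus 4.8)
The plan is to first observe that the extra hypothesis \eqref{CORassumption}, i.e. $(w-w_{\epsilon})(0)=0$, is equivalent to $W_{0}\equiv 0$: indeed $w_{\epsilon}$ is a space--time constant (it depends only on $\xi=x-st$ and solves $\partial_{t}w_{\epsilon}=0$), so $(w-w_{\epsilon})(0)=0$ together with $\partial_{t}(w-w_{\epsilon})=0$ forces $w\equiv w_{\epsilon}$, whence $W_{0}=\int_{-\infty}^{\cdot}(w_{0}-w_{\epsilon})=0$. Feeding $W_{0}=0$ into the smallness condition \eqref{THMIntExIDassumption}, every term carrying a factor of $W_{0}$ disappears --- crucially the term $(T/\epsilon)^{1/\gamma}(\epsilon^{2}\|\partial_{x}W_{0}\|^{2}+\epsilon^{4}\|\partial_{x}^{2}W_{0}\|^{2})$, which is the only $T$-dependent quantity --- and \eqref{THMIntExIDassumption} reduces to a $T$-independent smallness requirement on $V_{0}$ alone; the hypotheses \eqref{THMstabilityIDassumptions} of Theorem \ref{THMstability} are likewise $T$-free. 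Hence, applying Theorems \ref{GWPintegrated}--\ref{THMstability} on $[0,T]$ for arbitrary $T>0$ produces solutions obeying \eqref{GWPintegratedfinalEST} with a constant $C=C(s,v_{+},\gamma,\delta_{0})$ independent of $T$; by the uniqueness statements these coincide on overlapping intervals and thus define a single solution on $\mathbb{R}_{+}\times\mathbb{R}$ satisfying \eqref{GWPintegratedfinalEST} with $\sup_{t\in[0,\infty)}$ in place of $\sup_{t\in[0,T]}$, i.e. with $T=+\infty$. Passing back to $(u,v)$ via the equivalence of Section \ref{sec:ASYMstab} gives the global solution.

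For the long-time convergence I would extract from the now-global estimate \eqref{GWPintegratedfinalEST} the time-integrability of the dissipation. The elementary point is that $\phi_{\epsilon}(v_{\epsilon})$ is bounded below on all of $\mathbb{R}$: since $v\mapsto v(v-1)^{\gamma+1}$ is increasing, $\phi_{\epsilon}$ is decreasing on $(1,\infty)$, and $1<v_{\epsilon}<v_{+}$ by Proposition \ref{travelingsolution}, so $\phi_{\epsilon}(v_{\epsilon})\ge\phi_{\epsilon}(v_{+})=\tfrac{\epsilon\gamma}{v_{+}(v_{+}-1)^{\gamma+1}}=:c_{*}\epsilon>0$. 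Consequently $c_{*}\epsilon\int_{0}^{\infty}\|(v-v_{\epsilon})(t)\|_{L^{2}}^{2}\,dt=c_{*}\epsilon\int_{0}^{\infty}\|\partial_{x}V(t)\|_{L^{2}}^{2}\,dt\le\int_{0}^{\infty}\!\!\int_{\mathbb{R}}\phi_{\epsilon}(v_{\epsilon})|\partial_{x}V|^{2}\,dx\,dt\le C\epsilon^{3}$, so $t\mapsto\|(v-v_{\epsilon})(t)\|_{L^{2}}^{2}$ lies in $L^{1}(0,\infty)$. Moreover, writing $v-v_{\epsilon}=\partial_{x}V$ and $\partial_{x}(v-v_{\epsilon})=(v_{\epsilon}-1)\partial_{x}\eta+(\partial_{x}v_{\epsilon})\eta$ and using the profile bounds on $v_{\epsilon}-1$ and $\partial_{x}v_{\epsilon}$ from Proposition \ref{travelingsolution} together with the $\sup_{t}$ part of \eqref{GWPintegratedfinalEST}, one gets $\sup_{t}\|(v-v_{\epsilon})(t)\|_{H^{1}}<\infty$; and from Theorem \ref{THMstability} with $T=+\infty$, $\sup_{t}\|(u-u_{\epsilon})(t)\|_{L^{2}}<\infty$.

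Now I would run a Barbalat-type argument. Since $W_{0}=0$, the $V$-equation in \eqref{INTEGRATEDsystem} reads $\partial_{t}V=\phi_{\epsilon}(v)\partial_{x}v-\phi_{\epsilon}(v_{\epsilon})\partial_{x}v_{\epsilon}=u-u_{\epsilon}$, while $v-v_{\epsilon}=\partial_{x}V$; hence $\tfrac{d}{dt}\|(v-v_{\epsilon})(t)\|_{L^{2}}^{2}=-2\int_{\mathbb{R}}\partial_{x}^{2}V\,\partial_{t}V\,dx$, so $\big|\tfrac{d}{dt}\|(v-v_{\epsilon})(t)\|_{L^{2}}^{2}\big|\le 2\|\partial_{x}^{2}V(t)\|_{L^{2}}\|(u-u_{\epsilon})(t)\|_{L^{2}}$, which is bounded uniformly in $t$ by the previous paragraph. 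Thus $t\mapsto\|(v-v_{\epsilon})(t)\|_{L^{2}}^{2}$ is Lipschitz and integrable on $(0,\infty)$, hence tends to $0$; by the Gagliardo--Nirenberg inequality $\|(v-v_{\epsilon})(t)\|_{L^{\infty}}\le C\|(v-v_{\epsilon})(t)\|_{L^{2}}^{1/2}\|(v-v_{\epsilon})(t)\|_{H^{1}}^{1/2}\to 0$. For $u-u_{\epsilon}$ I would argue in the same spirit, using the momentum equation of \eqref{lagrangian} to bound $\tfrac{d}{dt}\|(u-u_{\epsilon})(t)\|_{L^{2}}^{2}$ uniformly and the time-integrability of a dissipation norm for $u-u_{\epsilon}$ (again via $\phi_{\epsilon}(v)\ge c_{*}\epsilon$ and the lower bound $v>1$, cf. Remark \ref{rmkgwpC}), which together with $\sup_{t}\|(u-u_{\epsilon})(t)\|_{H^{1}}<\infty$ yields $\|(u-u_{\epsilon})(t)\|_{L^{2}}\to 0$ and then $\|(u-u_{\epsilon})(t)\|_{L^{\infty}}\to 0$.

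The patching step is routine bookkeeping; the real work, and the main obstacle, is the decay, where the singular degenerate viscosity bites. The dissipation in \eqref{GWPintegratedfinalEST} is weighted both by $\phi_{\epsilon}(v_{\epsilon})$ and by powers of $(v_{\epsilon}-1)$, and although $\phi_{\epsilon}(v_{\epsilon})$ is bounded below, the combinations that appear when converting the weighted $\eta$-estimates into genuine bounds for $u-u_{\epsilon}$ --- typically $\phi_{\epsilon}(v_{\epsilon})(v_{\epsilon}-1)=\tfrac{\epsilon\gamma}{v_{\epsilon}(v_{\epsilon}-1)^{\gamma}}$ --- are themselves singular as $x\to-\infty$ once $\gamma>1$. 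Controlling them requires the finer weighted estimates obtained in the proof of Theorem \ref{GWPintegrated} (beyond those displayed in \eqref{GWPintegratedfinalEST}), together with the sharp congested-tail rates $A_{0},\dots,A_{3}$ of Proposition \ref{travelingsolution}, which quantify how fast $v_{\epsilon}-1$ and $\partial_{x}v_{\epsilon}$ vanish and thereby absorb the singular weights. This $\gamma>1$ bookkeeping for the decay of $u-u_{\epsilon}$ is where I expect the genuine difficulty to lie.
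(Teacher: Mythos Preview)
Your global-existence argument is exactly the paper's: once $W_{0}\equiv 0$, the only $T$-dependent term in \eqref{THMIntExIDassumption} vanishes and the a priori estimate \eqref{k=2postestimates} becomes uniform in $T$, so the fixed point of Proposition \ref{PROPstabilitycontraction} is global. Your Barbalat argument for $v-v_{\epsilon}$ is correct and in fact more explicit than the paper, which simply asserts the decay from $V\in L^{2}(\mathbb{R}_{+};H^{2})$ and Gagliardo--Nirenberg without writing the uniform-continuity step.

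For $u-u_{\epsilon}$ you overstate the difficulty. The paper does not invoke any refinement of \eqref{GWPintegratedfinalEST} or the tail constants $A_{0},\dots,A_{3}$; it relies on Lemma \ref{LEMMAexistence-u-u_eps}, where the weighted variable $\mu=\partial_{x}(u-u_{\epsilon})/(v_{\epsilon}-1)$ plays exactly the role $\eta$ played for $V$, and the $\gamma\ge 1$ bookkeeping is identical to the $\eta$-computations you already accept. When $W_{0}=0$ those estimates are uniform in $t$ (in the forcing term one uses $\int_{0}^{\infty}D_{0}(\tau;V)\,d\tau<\infty$ rather than the crude $\|\sqrt{\phi_{\epsilon}}\partial_{x}u_{\epsilon}\|_{L^{2}_{t,x}}^{2}\sim t$), giving $\sup_{t}\|(u-u_{\epsilon})\|_{L^{2}}<\infty$ and $\int_{0}^{\infty}\|\sqrt{\phi_{\epsilon}}\partial_{x}(u-u_{\epsilon})\|_{L^{2}}^{2}\,dt<\infty$. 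One small correction to your sketch: run Barbalat on $\|\partial_{x}(u-u_{\epsilon})(t)\|_{L^{2}}^{2}$, which is time-integrable via $\phi_{\epsilon}\ge c_{*}\epsilon$, not on $\|(u-u_{\epsilon})(t)\|_{L^{2}}^{2}$, which is bounded but not a priori integrable. Gagliardo--Nirenberg then gives $\|(u-u_{\epsilon})\|_{L^{\infty}}\le C\|(u-u_{\epsilon})\|_{L^{2}}^{1/2}\|\partial_{x}(u-u_{\epsilon})\|_{L^{2}}^{1/2}\to 0$, and no extra ``$\gamma>1$ bookkeeping'' is needed.
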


\section{Properties of the travelling wave solutions} \label{sec:prelim}

\subsection{Existence and asymptotic behavior of the travelling waves}

We give here the proof of Proposition \ref{travelingsolution}.
\begin{proof}
We first prove the existence of travelling wave solutions to \eqref{lagrangian}:
\begin{equation*}
    \left\{
    \begin{aligned}
        &\partial_t v -\partial_x u=0,\\
        &\partial_t u -\partial_x(\phi_\epsilon(v)\partial_x u)=0,
    \end{aligned}
    \right.
\end{equation*}
 We look for a travelling wave solution of this system, i.e. a pair $(u_\epsilon,v_\epsilon)$ where $u_\epsilon,v_\epsilon$ are functions of the variable $\xi=x-st$, $s$ being the speed of propagation of the solution. We also suppose that $(u_\epsilon,v_\epsilon)\rightarrow (u_\pm,v_\pm)$ and that $(u_\epsilon',v_\epsilon')\rightarrow (0,0)$ as $\xi$ goes to $\pm\infty$. We first obtain
\begin{equation*}
    \left\{
    \begin{aligned}
        &-sv_\epsilon'-u_\epsilon'=0,\\
        &-s u_\epsilon'-(\phi_\epsilon(v_\epsilon)u_\epsilon')'=0.
    \end{aligned}
    \right.
\end{equation*}
Integrating this equation between $\xi$ and $+\infty$, we get
\begin{equation*}
    \left\{
    \begin{aligned}
        &sv_\epsilon+u_\epsilon=sv_++u_+,\\
        &s u_\epsilon+\phi_\epsilon(v_\epsilon)u_\epsilon'=su_+.
    \end{aligned}
    \right.
\end{equation*}
The first equation yields $u_\epsilon = sv_++u_+-sv_\epsilon$. Substituting this into the second line, we deduce that $v_\epsilon$ solves the following ODE:
\begin{equation}
    \label{ODE}
    v_\epsilon' =\frac{s(v_+-v_\epsilon)}{\phi_\epsilon(v_\epsilon)}=\frac{s(v_+-v_\epsilon)v_\epsilon(v_\epsilon-1)^{\gamma+1}}{\epsilon\gamma}.
\end{equation}
Assuming $v_+>1$ and $s>0$, we may deduce that $v_-=1$, and $u_-=s(v_+-1)+u_+$, or equivalently $s=(u_--u_+)/(v_+-1)$. The function $v_\epsilon$ is therefore an increasing function taking values in the interval $(1, v_{+})$ (see Figure \ref{fig:A}).  The Cauchy-Lipschitz theorem thus yields that $v_\epsilon$ is the unique (up to a translation) global solution of \eqref{ODE}, as stated in Proposition \ref{travelingsolution}. Note that, as $v_\epsilon$ approaches $1$, the diffusion coefficient $\phi_\epsilon(v_\epsilon)$ tends to $+\infty$.

\begin{figure}[ht]
    \centering
    \includegraphics[scale=0.6]{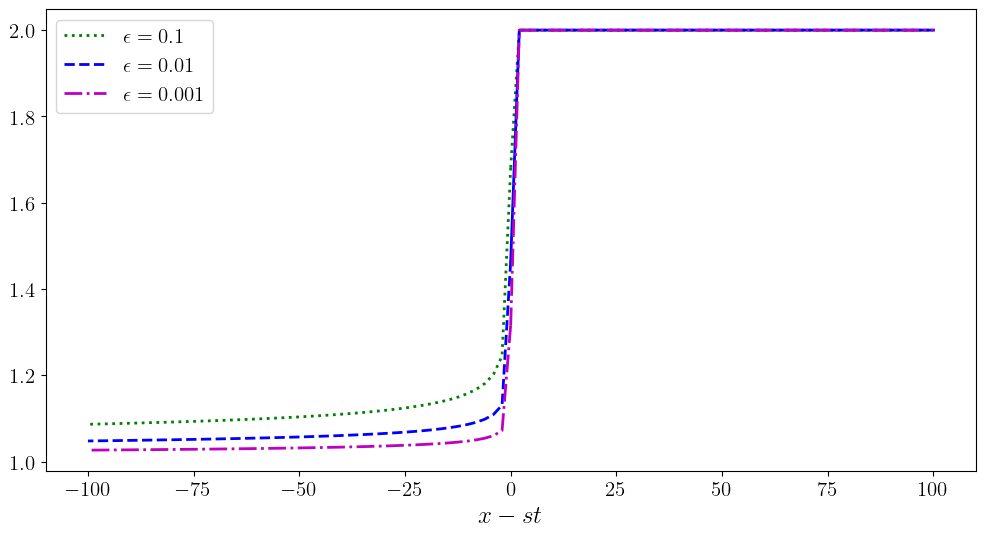}
    \caption{The profile of $v_{\epsilon}$, where we fix $v_{\epsilon}(0) = (1 + v_{+})/2$ and $v_{+} = 2, \gamma = 5$.}
    \label{fig:A}
\end{figure}

Let us now make this statement more quantitative. From now on, suppose that $v_\epsilon(0)= (1+v_+)/2$, i.e. $v_\epsilon$ is halfway between 1 and $v_+$. Let $\xi <0$. From the ODE \eqref{ODE} and the monotonicity of $v_\epsilon$, we obtain the following bound:
\begin{equation*}
   \frac{s(v_+-1)(v_\epsilon(\xi)-1)^{\gamma+1}}{2\epsilon\gamma}\leq  v_\epsilon'(\xi) \leq \frac{s(v_+-1)(v_+ +1)(v_\epsilon(\xi)-1)^{\gamma+1}}{2\epsilon\gamma}.
\end{equation*}
Dividing by $(v_\epsilon(\xi)-1)^{\gamma+1}$ and integrating between $\xi$ and 0 yields
\begin{equation*}
   - \frac{s(v_+-1)}{2\epsilon\gamma}\xi \leq -\frac{2^\gamma}{\gamma(v_+-1)^\gamma} + \frac{1}{\gamma(v_\epsilon(\xi)-1)^\gamma} \leq -\frac{s(v_+-1)(v_+ +1)}{2\epsilon\gamma}\xi,
\end{equation*}
i.e. 
\begin{equation*}
    1 + \left(B-\frac{A_0}{\epsilon}\xi\right)^{-1/\gamma} \leq v_\epsilon(\xi) \leq 1+\left(B-\frac{A_1}{\epsilon}\xi\right)^{-1/\gamma},
\end{equation*}
where
\begin{equation*}
    A_0:= \frac{s(v_+-1)(v_+ +1)}{2}, \qquad  A_1:= \frac{s(v_+-1)}{2}, \,\,\, \mathrm{and} \,\,\, B:= \left(\frac{2}{v_+-1}\right)^\gamma.
\end{equation*}
Estimate \eqref{asymptoticbound} follows. When $\xi >0$, one has similarly
\begin{equation*}
     \frac{s(v_++1)(v_+-1)^{\gamma+1}}{2^{\gamma+2}\epsilon\gamma} \leq \frac{v_\epsilon'(\xi)}{v_+-v_\epsilon(\xi)} \leq  \frac{sv_+(v_+-1)^{\gamma+1}}{\epsilon\gamma},
\end{equation*}
i.e. 
\begin{equation*}
    \frac{A_2}{\epsilon} \leq \frac{v_\epsilon'(\xi)}{v_+-v_\epsilon(\xi)} \leq \frac{A_3}{\epsilon},
    \end{equation*}
    with 
    \begin{equation*}
    A_2:= \frac{s(v_++1)(v_+-1)^{\gamma+1}}{2^{\gamma+2}\gamma} \quad \mathrm{and} \quad A_3:= \frac{sv_+(v_+-1)^{\gamma+1}}{\gamma} .
\end{equation*}
\color{black}
Integrating between 0 and $\xi$ yields 
\begin{equation*}
    \frac{A_2}{\epsilon}\xi \leq -\ln[v_+-v_\epsilon(\xi)] + \ln\left[\frac{v_+-1}{2}\right]\leq \frac{A_3}{\epsilon}\xi,
\end{equation*}
i.e.
\begin{equation*}
    \frac{v_+-1}{2}\exp\left(-\frac{A_2}{\epsilon}\xi\right)\geq v_+-v_\epsilon(\xi) \geq \frac{v_+-1}{2}\exp\left(-\frac{A_3}{\epsilon}\xi\right),
\end{equation*}
which is \color{black} the desired estimate. 

\end{proof}

\subsection{Passage to the integrated system and reformulation of the equations}

We now want to study the stability of the travelling wave solution obtained in the previous section. In order to do so, we first rewrite \eqref{lagrangian} in term of the unknowns $v$ and $w  =u-\phi_\epsilon(v)\partial_x v$:
\begin{equation} 
    \left\{
    \begin{aligned}
        &\partial_t w = 0,\\
        &\partial_t v -\partial_x w - \partial_x (\phi_\epsilon(v)\partial_x v)=0.
    \end{aligned}
    \right.
\end{equation}
Let $(w_\epsilon,v_\epsilon)$ denote the travelling wave solution. It solves the same system:
\begin{equation}
    \left\{
    \begin{aligned}
        &\partial_t w_\epsilon= 0,\\
        &\partial_t v_\epsilon -\partial_x w_\epsilon - \partial_x (\phi_\epsilon(v_\epsilon)\partial_x v_\epsilon)=0.
    \end{aligned}
    \right.
\end{equation}
Taking the difference between these two systems yields
\begin{equation}
    \label{differencesystem}
    \left\{
    \begin{aligned}
        &\partial_t(w- w_\epsilon)= 0,\\
        &\partial_t (v-v_\epsilon) -\partial_x (w-w_\epsilon) - \partial_x(\phi_\epsilon(v)\partial_x v)+\partial_x(\phi_\epsilon(v_\epsilon)\partial_x v_\epsilon)=0.
    \end{aligned}
    \right.
\end{equation}
Following \cite{dalibard2019existence,vasseur2016nonlinear}, we introduce $W$, $V$ such that
\begin{equation}
    W(t,x):=\int_{-\infty}^x(w(t,x')-w_\epsilon(t,x'))\mathrm{d}x'\,\,\,\mathrm{and}\,\,\, V(t,x):=\int_{-\infty}^x(v(t,x')-v_\epsilon (t,x'))\mathrm{d}x'.
\end{equation}
Integrating \eqref{differencesystem} between $-\infty$ and $x$ yields for $(W,V)$ the following system:
\begin{equation*}
        \left\{
    \begin{aligned}
        &\partial_tW = 0,\\
        &\partial_t V -\partial_x W - \phi_\epsilon(v)\partial_x v+\phi_\epsilon(v_\epsilon)\partial_xv_\epsilon=0.
    \end{aligned}
    \right.
\end{equation*}
Introducing $\psi_\epsilon$ such that $\psi_\epsilon'=\phi_\epsilon$, we obtain that $W=W_0$ is constant and
\begin{equation} \label{ARintegrated}    
   \partial_t V -\partial_x W_0 -\partial_x(\psi_\epsilon(v)-\psi_\epsilon(v_\epsilon))=0.
\end{equation}
We first notice that we can replace $v$ by $v_\epsilon+\partial_xV$. In order to write the system as a linearized part around $v_\epsilon$ plus a perturbation part, we also move the terms involving $\psi_\epsilon$ to the right-hand side and subtract $\partial_x(\phi_\epsilon(v_\epsilon)\partial_xV)$ from both sides:
\begin{equation*}
        \partial_t V -\partial_x W_0 -\partial_x(\phi_\epsilon(v_\epsilon)\partial_xV)=\partial_x\left[\psi_\epsilon(v_\epsilon+\partial_xV)-\psi_\epsilon(v_\epsilon)-\psi_\epsilon'(v_\epsilon)\partial_x V\right].
\end{equation*}
This previous equation can be written in the following compact way:
\begin{equation}
        \partial_t V -\partial_x W_0 -\partial_x(\phi_\epsilon(v_\epsilon)\partial_xV)=\partial_x H(\partial_xV),
\end{equation}
with
\begin{equation}
    H(f):=\psi_\epsilon(v_\epsilon+f)-\psi_\epsilon(v_\epsilon)-\psi_\epsilon'(v_\epsilon)f.
\end{equation}

\subsection{Preliminary estimates}

We give in this section some estimates on the functions $v_\epsilon$, $\psi_\epsilon$, $H$ and their derivatives.

\begin{lemma}
\label{vepsestimate}
(Estimates on $v_\epsilon$) For any $k\geq 1$, there exists a constant $C= C(k,\gamma,v_+,s)$ such that 
\begin{equation} 
    \forall \epsilon>0, \qquad\left|\partial_x^kv_\epsilon\right|\leq \frac{C(v_\epsilon-1)^{k\gamma +1}}{\epsilon^k}.
\end{equation}
\end{lemma}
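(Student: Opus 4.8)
The plan is to prove the bound $|\partial_x^k v_\epsilon| \le C(v_\epsilon - 1)^{k\gamma+1}/\epsilon^k$ by induction on $k$, using the first-order ODE \eqref{ODE} as the base case and differentiating it repeatedly. Recall \eqref{ODE} states
\[
v_\epsilon' = \frac{s(v_+ - v_\epsilon)v_\epsilon(v_\epsilon-1)^{\gamma+1}}{\epsilon\gamma} =: \frac{1}{\epsilon}\,G(v_\epsilon),
\]
where $G(v) := \tfrac{s}{\gamma}(v_+ - v)v(v-1)^{\gamma+1}$ is a smooth function on the compact interval $[1, v_+]$ in which $v_\epsilon$ takes values (by Proposition \ref{travelingsolution}). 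For $k=1$ the claim is immediate: since $0 \le v_+ - v_\epsilon \le v_+ - 1$ and $1 \le v_\epsilon \le v_+$ on the whole line, we get $|v_\epsilon'| \le \tfrac{s v_+(v_+-1)}{\gamma}\,\tfrac{(v_\epsilon-1)^{\gamma+1}}{\epsilon}$, which is the stated bound with $k=1$.

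For the inductive step, I would differentiate $\epsilon v_\epsilon' = G(v_\epsilon)$ a total of $k$ times in $x$. By Faà di Bruno / the Leibniz rule, $\epsilon\, \partial_x^{k+1} v_\epsilon = \partial_x^k\big(G(v_\epsilon)\big)$ is a sum of terms of the form $G^{(j)}(v_\epsilon)\prod_{i=1}^j \partial_x^{m_i} v_\epsilon$ with $m_i \ge 1$ and $\sum_i m_i = k$. The key structural observation is that $G(v) = \tfrac{s}{\gamma}(v_+ - v)v(v-1)^{\gamma+1}$ vanishes to order $\gamma+1$ at $v=1$, hence each derivative $G^{(j)}$ vanishes to order at least $\gamma+1-j$ at $v=1$ (and for $j \ge \gamma+1$ it is simply bounded); in all cases, on the interval $[1,v_+]$ one has $|G^{(j)}(v_\epsilon)| \le C_j (v_\epsilon-1)^{(\gamma+1-j)_+}$, where $(\cdot)_+$ denotes the positive part. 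Applying the induction hypothesis to each factor $\partial_x^{m_i} v_\epsilon$ gives $\prod_i |\partial_x^{m_i} v_\epsilon| \le C \epsilon^{-k}(v_\epsilon-1)^{k\gamma + j}$ (using $\sum m_i = k$ and that there are $j$ factors, each contributing a "$+1$"). Multiplying by the bound on $G^{(j)}$ yields a contribution bounded by $C\epsilon^{-k}(v_\epsilon-1)^{k\gamma + j + (\gamma+1-j)_+}$. When $j \le \gamma+1$ this exponent equals $k\gamma + j + \gamma + 1 - j = (k+1)\gamma + 1$; when $j > \gamma+1$ it is $k\gamma + j \ge k\gamma + \gamma + 2 > (k+1)\gamma+1$, and since $v_\epsilon - 1 \le v_+ - 1$ is bounded we may discard the surplus powers. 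Dividing through by $\epsilon$ then gives $|\partial_x^{k+1} v_\epsilon| \le C\epsilon^{-(k+1)}(v_\epsilon-1)^{(k+1)\gamma+1}$, closing the induction.

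The only mildly delicate point — and the step I would flag as the main obstacle — is the precise bookkeeping that $|G^{(j)}(v)| \lesssim (v-1)^{(\gamma+1-j)_+}$ uniformly on $[1,v_+]$ for non-integer $\gamma$, where $G$ is only $C^{\lceil \gamma \rceil}$-smooth near $v=1$. This is handled by writing $G(v) = (v-1)^{\gamma+1} R(v)$ with $R(v) := \tfrac{s}{\gamma}(v_+ - v)v$ a polynomial, so that the Leibniz rule gives $G^{(j)}(v) = \sum_{l=0}^{\min(j,2)} \binom{j}{l}\big[(v-1)^{\gamma+1}\big]^{(j-l)} R^{(l)}(v)$, and $\big[(v-1)^{\gamma+1}\big]^{(j-l)} = c_{j,l}(v-1)^{\gamma+1-(j-l)}$ with exponent $\gamma+1-(j-l) \ge \gamma-1 > 0$ as long as $\gamma \ge 1$; since $R$ and its (at most two) derivatives are bounded on the compact interval, the claimed bound on $G^{(j)}$ follows, and the edge exponents are handled exactly as above by absorbing bounded powers of $(v_\epsilon - 1)$. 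All constants depend only on $k, \gamma, v_+, s$ and are uniform in $\epsilon > 0$ because the ODE's $\epsilon$-dependence is entirely explicit. This completes the argument.
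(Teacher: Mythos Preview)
Your approach --- induction on $k$ using the ODE $v_\epsilon' = G(v_\epsilon)/\epsilon$ --- is the same as the paper's. The paper phrases the induction slightly more sharply, showing that $\partial_x^k v_\epsilon = f_k(v_\epsilon)(v_\epsilon-1)^{k\gamma+1}/\epsilon^k$ for an $\epsilon$-independent function $f_k$ smooth on $(0,\infty)$ with $f_k(1)\neq 0$, and then bounds $f_k$ on the compact range $[1,v_+]$. This avoids Fa\`a di Bruno entirely: one just differentiates once and uses $v_\epsilon' = R(v_\epsilon)(v_\epsilon-1)^{\gamma+1}/\epsilon$ to get the recursion $f_{k+1}(v) = \big[(v-1)f_k'(v) + (k\gamma+1)f_k(v)\big]R(v)$.

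There is, however, a genuine gap in your bookkeeping for non-integer $\gamma$. Your key claim $|G^{(j)}(v)| \le C_j(v-1)^{(\gamma+1-j)_+}$ on $[1,v_+]$ is false when $\gamma+1-j<0$ and $\gamma\notin\mathbb{N}$: in that regime $G^{(j)}$ is \emph{not} bounded near $v=1$ but blows up like $(v-1)^{\gamma+1-j}$. Concretely, for $\gamma=3/2$ and $j=3$ the Leibniz expansion of $G^{(3)}$ contains the term $[(v-1)^{5/2}]^{(3)}R(v) = \tfrac{15}{8}(v-1)^{-1/2}R(v)$, which is singular. Your attempted justification in the last paragraph repeats the error: the inequality $\gamma+1-(j-l)\ge \gamma-1$ that you assert would require $j-l\le 2$, but since $l\le\min(j,2)$ this fails as soon as $j\ge 3$ (take $l=0$).

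The fix is immediate and your own computation essentially contains it: replace the bound $|G^{(j)}(v)|\le C_j(v-1)^{(\gamma+1-j)_+}$ by the exact structural statement $G^{(j)}(v)=(v-1)^{\gamma+1-j}h_j(v)$ with $h_j$ smooth and bounded on $[1,v_+]$ (this follows directly from your Leibniz expansion, since the bracket $\sum_l \binom{j}{l}c_{j-l}(v-1)^{l}R^{(l)}(v)$ is a polynomial). Then the product with $(v_\epsilon-1)^{k\gamma+j}$ gives exponent $k\gamma+j+\gamma+1-j=(k+1)\gamma+1$ exactly, with no case distinction needed. Alternatively, carrying the exact form $f_k(v_\epsilon)(v_\epsilon-1)^{k\gamma+1}/\epsilon^k$ through a one-step induction, as the paper does, sidesteps the issue entirely.
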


\begin{proof}
By using the ODE \eqref{ODE} satisfied by $v_\epsilon$, one can show by induction that for every $k\geq 1$, there exists a function $f_k$ independent of $\epsilon$ such that
\begin{itemize}
    \item $f_k$ is smooth on $\mathbb{R}_+^*$,
    \item $f_k(1)\neq 0$,
    \item $\partial_x^kv_\epsilon=f_k(v_\epsilon)(v_\epsilon-1)^{k\gamma+1}/\epsilon^k$.
\end{itemize}
Since $f_k$ is continuous on $[1,v_+]$ and $1<v_\epsilon<v_+$, the factor $f_k(v_\epsilon)$ is uniformly bounded. 
\end{proof}

\begin{lemma}
\label{psiestimate}
    (Estimates on $\psi_\epsilon$) Let $\Bar{v}>1$ be arbitrary. For any $k\geq 1$, there exists $C= C(k,\gamma,\Bar{v})$ such that, for every $v\in (1,\Bar{v})$, for every $\epsilon>0$,
    \begin{equation}
        \left|\psi_\epsilon^{(k)}(v)\right|\leq \frac{C\epsilon}{(v-1)^{\gamma+k}}.
    \end{equation}
\end{lemma}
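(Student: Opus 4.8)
The plan is to prove Lemma \ref{psiestimate} by induction on $k$, using the explicit form of $\phi_\epsilon = \psi_\epsilon'$ as a starting point and differentiating. Recall from \eqref{diffusioncoefficient} that
\[
\psi_\epsilon'(v) = \phi_\epsilon(v) = \frac{\epsilon \gamma}{v(v-1)^{\gamma+1}}.
\]
For the base case $k=1$, we have $\psi_\epsilon'(v) = \epsilon\gamma \, v^{-1}(v-1)^{-(\gamma+1)}$. Since $v \in (1,\bar v)$, the factor $v^{-1}$ is bounded by $1$, so $|\psi_\epsilon'(v)| \le \epsilon\gamma (v-1)^{-(\gamma+1)}$, which is the claim with $C = \gamma$ (indeed the power $\gamma + k = \gamma+1$ matches).

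For the inductive step, I would write $\psi_\epsilon^{(k)}(v) = \epsilon \, g_k(v)$ where $g_k$ is \emph{independent of $\epsilon$} (clear since $\psi_\epsilon'/\epsilon$ is $\epsilon$-free and differentiation in $v$ preserves this), and show by induction that $g_k$ is a finite linear combination of terms of the form $v^{-a}(v-1)^{-b}$ with integers $a \ge 1$ and $b \le \gamma + k$, the maximal value $b = \gamma+k$ being attained. Differentiating such a term,
\[
\frac{d}{dv}\left( v^{-a}(v-1)^{-b} \right) = -a\, v^{-a-1}(v-1)^{-b} - b\, v^{-a}(v-1)^{-b-1},
\]
produces terms of the same shape with $b$ increased by at most $1$, so the bookkeeping on the exponent of $(v-1)$ closes. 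For each such term, on $(1,\bar v)$ one bounds $v^{-a} \le 1$ and $(v-1)^{-b} \le (v-1)^{-(\gamma+k)} \max(1, (\bar v - 1)^{(\gamma+k)-b})$ — i.e. one uses $(v-1) < \bar v - 1$ to trade a lower power of $(v-1)^{-1}$ for the maximal power at the cost of a constant depending on $\bar v$. Summing the finitely many terms gives $|\psi_\epsilon^{(k)}(v)| = \epsilon |g_k(v)| \le C(k,\gamma,\bar v)\, \epsilon\, (v-1)^{-(\gamma+k)}$, as desired.

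I do not anticipate a serious obstacle here; the only mild subtlety is being careful that the induction tracks the \emph{largest} exponent of $(v-1)^{-1}$ correctly (it grows by exactly one per derivative, starting from $\gamma+1$ at $k=1$) and that the constant is allowed to depend on $\bar v$, which is essential since a lower inverse power of $(v-1)$ is only controlled by the maximal one after paying a $\bar v$-dependent price near $v = \bar v$. This mirrors exactly the structure used in the proof of Lemma \ref{vepsestimate}, and could alternatively be phrased via Faà di Bruno applied to $\psi_\epsilon^{(k)}(v) = (\phi_\epsilon)^{(k-1)}(v)$, but the direct linear-combination induction is cleaner and avoids combinatorial overhead.
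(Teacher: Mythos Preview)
Your proof is correct and follows essentially the same approach as the paper. The paper's proof simply asserts (by induction, as in Lemma \ref{vepsestimate}) that $\psi_\epsilon^{(k)}(v) = \epsilon\, g_k(v)(v-1)^{-\gamma-k}$ for some smooth $g_k$ independent of $\epsilon$ with $g_k(1)\neq 0$, and then bounds $g_k$ on the compact interval $[1,\bar v]$; you make the same inductive structure explicit by tracking the monomials $v^{-a}(v-1)^{-b}$, which amounts to exhibiting $g_k$ concretely rather than invoking its smoothness.
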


\begin{proof}
    As in the previous lemma, one can show that there exists a smooth function $g_k$ defined on $\mathbb{R}_+^*$, independent of $\epsilon$, such that $\psi_\epsilon^{(k)}(v)=\epsilon g_k(v)(v-1)^{-\gamma-k}$ and $g_k(1)\neq 0$. 
    Since $g_k$ is smooth, we can bound it on $[1, \Bar{v}]$.
\end{proof}

As a consequence of Lemmas \ref{vepsestimate} and \ref{psiestimate}, we obtain the following estimates:
\begin{lemma}
    \label{dxkveps}
    The functions $\phi_\epsilon^{1/2}\partial_x v_\epsilon$ and $\phi_\epsilon^{1/2}\dfrac{\partial_x^kv_\epsilon}{v_\epsilon-1}$, $k\geq 2$, belong to $L^2(\mathbb{R})$, with a time-independent $L^2(\mathbb{R})$ norm.
\end{lemma}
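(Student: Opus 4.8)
The plan is to show directly that each of the listed functions has finite $L^{2}(\mathbb{R})$ norm, the time-independence then being automatic: since $v_{\epsilon}(t,x)=v_{\epsilon}(x-st)$ is a travelling wave, the integrand of any of these $L^{2}$ norms at time $t$ is merely the integrand at $t=0$ translated by $st$, so it suffices to work with the profile $v_{\epsilon}=v_{\epsilon}(\xi)$ of Proposition \ref{travelingsolution}. The main device is the change of variables $\xi\mapsto v=v_{\epsilon}(\xi)$: by that proposition $v_{\epsilon}$ is a smooth strictly increasing bijection of $\mathbb{R}$ onto $(1,v_{+})$, and from the ODE \eqref{ODE} one has $v_{\epsilon}'=s(v_{+}-v_{\epsilon})/\phi_{\epsilon}(v_{\epsilon})$, hence $d\xi=\phi_{\epsilon}(v)\,dv/\big(s(v_{+}-v)\big)$; this transforms every integral below into an integral over the bounded interval $(1,v_{+})$.

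For the term $\phi_{\epsilon}^{1/2}\partial_{x}v_{\epsilon}$, using $v_{\epsilon}'=s(v_{+}-v_{\epsilon})/\phi_{\epsilon}(v_{\epsilon})$ together with the substitution gives
\[
\int_{\mathbb{R}}\phi_{\epsilon}(v_{\epsilon})\,(v_{\epsilon}')^{2}\,d\xi=\int_{\mathbb{R}}\frac{s^{2}(v_{+}-v_{\epsilon})^{2}}{\phi_{\epsilon}(v_{\epsilon})}\,d\xi=\int_{1}^{v_{+}}s(v_{+}-v)\,dv=\frac{s(v_{+}-1)^{2}}{2},
\]
which is finite (and in fact independent of $\epsilon$).

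For the terms with $k\geq2$, I use the structure recorded in the proof of Lemma \ref{vepsestimate}: there is a smooth, $\epsilon$-independent function $f_{k}$ on $\mathbb{R}_{+}^{*}$ with $\partial_{x}^{k}v_{\epsilon}=f_{k}(v_{\epsilon})(v_{\epsilon}-1)^{k\gamma+1}/\epsilon^{k}$. Combining this with $\phi_{\epsilon}(v)=\epsilon\gamma/\big(v(v-1)^{\gamma+1}\big)$ from \eqref{diffusioncoefficient} and the substitution, the corresponding integral equals, up to a positive constant depending only on $s,\gamma,\epsilon$,
\[
\int_{1}^{v_{+}}\frac{f_{k}(v)^{2}}{v^{2}(v_{+}-v)}\,(v-1)^{2(k-1)\gamma-2}\,dv.
\]
It remains to see that this is finite. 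Near $v=1$ the only possibly singular factor is $(v-1)^{2(k-1)\gamma-2}$, and since $k\geq2$ and $\gamma\geq1$ the exponent $2(k-1)\gamma-2$ is nonnegative, so the integrand is bounded there. Near $v=v_{+}$ the factor $(v_{+}-v)^{-1}$ is harmless because $f_{k}$ vanishes at $v_{+}$: revisiting the induction of Lemma \ref{vepsestimate} yields the recursion $f_{j+1}(v)=f_{1}(v)\big[(v-1)f_{j}'(v)+(j\gamma+1)f_{j}(v)\big]$, so $f_{1}$ divides $f_{k}$, while $f_{1}(v)=s\,v(v_{+}-v)/\gamma$ (read off from \eqref{ODE}) vanishes at $v_{+}$; hence $f_{k}(v)^{2}/(v_{+}-v)\to0$ as $v\to v_{+}$. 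Thus the integrand is bounded on the compact interval $[1,v_{+}]$ and the integral converges.

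The only genuinely delicate point is the behaviour at the two ends $v\to1$ (i.e. $\xi\to-\infty$) and $v\to v_{+}$ (i.e. $\xi\to+\infty$); away from the ends everything is continuous and bounded. One could equally argue without any change of variables, via the pointwise bounds of Proposition \ref{travelingsolution} and Lemma \ref{vepsestimate}: as $\xi\to-\infty$ the relevant integrands decay polynomially at the rate $|\xi|^{-1-1/\gamma}$ (for the $k=1$ term) or faster, which is integrable precisely because $\gamma\geq1$, while as $\xi\to+\infty$ the factor $(v_{+}-v_{\epsilon})$ present in every $\partial_{x}^{k}v_{\epsilon}$ — again visible from the factorisation through $f_{1}$ — together with \eqref{doublebound2} forces exponential decay. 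Either way, all the stated $L^{2}(\mathbb{R})$ norms are finite, and they are time-independent by translation invariance of the travelling wave.
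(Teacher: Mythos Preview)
Your proof is correct, and it takes a genuinely different route from the paper's. The paper argues pointwise in the variable $\xi$: at $+\infty$ it invokes that $\phi_\epsilon(v_\epsilon)$ and $v_\epsilon-1$ have finite positive limits so the question reduces to $\partial_x^kv_\epsilon\in L^2$, while at $-\infty$ it combines Lemma~\ref{vepsestimate} with the upper bound \eqref{doublebound} on $v_\epsilon-1$ to obtain explicit polynomial decay $\sim(B-A_1\xi/\epsilon)^{-p}$ and then checks $p>1/2$. Your change of variables $\xi\mapsto v=v_\epsilon(\xi)$ replaces both tail analyses by a single integral over the compact interval $(1,v_+)$, which is cleaner and, for $k=1$, even yields the exact value $s(v_+-1)^2/2$ independent of $\epsilon$. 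The price you pay is that the behaviour near $v=v_+$ requires the extra observation that $f_1\mid f_k$, which you extract from the recursion $f_{j+1}=f_1[(v-1)f_j'+(j\gamma+1)f_j]$; the paper avoids this by treating the right tail via the exponential decay in \eqref{doublebound2}. One very minor slip: in your closing remark, the rate $|\xi|^{-1-1/\gamma}$ for the $k=1$ term is integrable for every $\gamma>0$, not ``precisely because $\gamma\geq1$''; the hypothesis $\gamma\geq1$ is genuinely needed only in your boundedness argument at $v=1$ for the $k=2$ term (where the exponent $2(k-1)\gamma-2$ must be nonnegative).
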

\begin{proof}
    We first see with Proposition \ref{travelingsolution} and the ODE \eqref{ODE} satisfied by $v_\epsilon$ that, for any $k\geq 1$, $\partial_x^kv_\epsilon\in L^1(\mathbb{R})\cap L^\infty(\mathbb{R})$. Since $\phi_\epsilon$ and $v_\epsilon-1$ go to a positive and finite limit as $\xi$ goes to $+\infty$, the integrability of the functions $\phi_\epsilon^{1/2}\partial_x v_\epsilon$ and $\phi_\epsilon^{1/2}\dfrac{\partial_x^kv_\epsilon}{v_\epsilon-1}$ at $+\infty$ is a consequence of the one of $\partial_x^k v_\epsilon$.

    Concerning the integrability when $\xi\rightarrow -\infty$, we first see with Lemma \ref{vepsestimate} that there exists $C=C(\epsilon)>0$ such that 
    \begin{equation*}
        |\phi_\epsilon^{1/2}\partial_x v_\epsilon| \leq C(v_\epsilon-1)^{(\gamma+1)/2},\quad \mathrm{and} \quad \left|\phi_\epsilon^{1/2}\frac{\partial_x^kv_\epsilon}{v_\epsilon-1}\right| \leq C(v_\epsilon-1)^{(2k-1)\gamma/2-1/2}. 
    \end{equation*}
    By Proposition \ref{travelingsolution}, we deduce that 
    \begin{equation*}
         |\phi_\epsilon^{1/2}\partial_x v_\epsilon| \leq C\left(B-\frac{A_1}{\epsilon}\xi\right)^{-1/2-1/(2\gamma)}\quad\mathrm{and}\quad\left|\phi_\epsilon^{1/2}\frac{\partial_x^kv_\epsilon}{v_\epsilon-1}\right| \leq C\left(B-\frac{A_1}{\epsilon}\xi\right)^{1/2+1/(2\gamma)-k}\in L^2(\mathbb{R}_-)
    \end{equation*}
    since $k\geq 2$. Note that we consider functions that depend only on the variable $\xi=x-st$, and so the $L^2$-norms are independent of time.
\end{proof}
\color{black}

\begin{lemma}
\label{Hestimate}
    (Bounds on H) Let $f$ such that there exists $\delta<1$ with $\|\frac{f}{v_\epsilon-1}\|_\infty\leq\delta$. Then one has the following bounds on $H(f)$:
    \begin{equation} \label{H}
        |H(f)|\leq \frac{C\epsilon}{(v_\epsilon-1)^{\gamma+2}}f^2,
    \end{equation}
    \begin{equation} \label{dxH}
        |\partial_xH(f)|\leq \frac{C}{(v_\epsilon-1)^2}f^2+\frac{C\epsilon}{(v_\epsilon-1)^{\gamma+2}}|f||\partial_x f|,
    \end{equation}
    \begin{equation}
    \begin{aligned} \label{dx2H}
        |\partial_{x}^2H(f)|\leq &\frac{C(v_\epsilon-1)^{\gamma-2}}{\epsilon}f^2+\frac{C\epsilon}{(v_\epsilon-1)^{\gamma+2}}|f|| \partial_{x}^2f| + \frac{C\epsilon}{(v_\epsilon-1)^{\gamma+2}}|\partial_xf|^2,
    \end{aligned}
        \end{equation}
        for some $C= C(s,\gamma,\delta, v_+)$.
\end{lemma}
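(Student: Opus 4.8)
The plan is to derive all three bounds by Taylor expanding $\psi_\epsilon$ and its derivatives around $v_\epsilon$ with integral remainders, and then to balance the resulting powers of $(v_\epsilon-1)$ and $\epsilon$ using Lemma~\ref{psiestimate} for $\psi_\epsilon^{(k)}$ and Lemma~\ref{vepsestimate} for $\partial_x^k v_\epsilon$. The only structural fact needed before any expansion is that the hypothesis $\|f/(v_\epsilon-1)\|_\infty \le \delta < 1$ keeps the argument $v_\epsilon+\tau f$ trapped in a fixed interval for every $\tau\in[0,1]$: indeed $(1-\delta)(v_\epsilon-1) \le v_\epsilon+\tau f-1 \le (1+\delta)(v_\epsilon-1)$, and since $v_\epsilon < v_+$ the right-hand side is $< (1+\delta)(v_+-1)$. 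Thus Lemma~\ref{psiestimate} applies with $\bar v := 1+(1+\delta)(v_+-1)$ and gives $|\psi_\epsilon^{(k)}(v_\epsilon+\tau f)| \le C\epsilon (v_\epsilon-1)^{-(\gamma+k)}$ uniformly in $\tau\in[0,1]$, with $C=C(k,\gamma,\delta,v_+)$; in particular all remainder integrals below are finite.

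For \eqref{H}, one writes $H(f) = f^2\int_0^1 (1-\tau)\psi_\epsilon''(v_\epsilon+\tau f)\,d\tau$ and inserts the above bound with $k=2$. For \eqref{dxH}, differentiating in $x$ and regrouping gives
\[ \partial_x H(f) = \big[\psi_\epsilon'(v_\epsilon+f)-\psi_\epsilon'(v_\epsilon)-\psi_\epsilon''(v_\epsilon)f\big]\partial_x v_\epsilon + \big[\psi_\epsilon'(v_\epsilon+f)-\psi_\epsilon'(v_\epsilon)\big]\partial_x f. \]
The first bracket is a second-order Taylor remainder of $\psi_\epsilon'$, hence $O\big(\epsilon(v_\epsilon-1)^{-(\gamma+3)}f^2\big)$; multiplying by $|\partial_x v_\epsilon| \lesssim (v_\epsilon-1)^{\gamma+1}/\epsilon$ produces the term $C(v_\epsilon-1)^{-2}f^2$. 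The second bracket is a first-order Taylor remainder of $\psi_\epsilon'$, so it is $O\big(\epsilon(v_\epsilon-1)^{-(\gamma+2)}|f|\big)$, and multiplying by $|\partial_x f|$ gives the second term of \eqref{dxH}.

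For \eqref{dx2H}, differentiating once more and organizing the chain rule yields
\[ \partial_x^2 H(f) = \widetilde A\,(\partial_x v_\epsilon)^2 + 2\widetilde B\,\partial_x v_\epsilon\,\partial_x f + \psi_\epsilon''(v_\epsilon+f)(\partial_x f)^2 + A\,\partial_x^2 v_\epsilon + B\,\partial_x^2 f, \]
where $A:=\psi_\epsilon'(v_\epsilon+f)-\psi_\epsilon'(v_\epsilon)-\psi_\epsilon''(v_\epsilon)f$, $B:=\psi_\epsilon'(v_\epsilon+f)-\psi_\epsilon'(v_\epsilon)$, $\widetilde A:=\psi_\epsilon''(v_\epsilon+f)-\psi_\epsilon''(v_\epsilon)-\psi_\epsilon'''(v_\epsilon)f$ and $\widetilde B:=\psi_\epsilon''(v_\epsilon+f)-\psi_\epsilon''(v_\epsilon)$. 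Taylor expansion as above gives $|A|\lesssim\epsilon(v_\epsilon-1)^{-(\gamma+3)}f^2$, $|B|\lesssim\epsilon(v_\epsilon-1)^{-(\gamma+2)}|f|$, $|\widetilde A|\lesssim\epsilon(v_\epsilon-1)^{-(\gamma+4)}f^2$, $|\widetilde B|\lesssim\epsilon(v_\epsilon-1)^{-(\gamma+3)}|f|$; combining with $|\partial_x v_\epsilon|\lesssim(v_\epsilon-1)^{\gamma+1}/\epsilon$, $|\partial_x^2 v_\epsilon|\lesssim(v_\epsilon-1)^{2\gamma+1}/\epsilon^2$ and $|\psi_\epsilon''(v_\epsilon+f)|\lesssim\epsilon(v_\epsilon-1)^{-(\gamma+2)}$, one checks that the first, third, fourth and fifth terms are bounded respectively by $C(v_\epsilon-1)^{\gamma-2}\epsilon^{-1}f^2$, $C\epsilon(v_\epsilon-1)^{-(\gamma+2)}|\partial_x f|^2$, $C(v_\epsilon-1)^{\gamma-2}\epsilon^{-1}f^2$ and $C\epsilon(v_\epsilon-1)^{-(\gamma+2)}|f||\partial_x^2 f|$, i.e.\ exactly the three terms on the right of \eqref{dx2H}. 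The remaining cross term satisfies $|2\widetilde B\,\partial_x v_\epsilon\,\partial_x f| \le C(v_\epsilon-1)^{-2}|f||\partial_x f|$, and since $(v_\epsilon-1)^{-2}$ is the geometric mean of $(v_\epsilon-1)^{\gamma-2}\epsilon^{-1}$ and $\epsilon(v_\epsilon-1)^{-(\gamma+2)}$, Young's inequality absorbs it into the first and third terms of \eqref{dx2H}. Collecting everything gives the claim.

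There is no genuine obstacle: the proof is essentially a careful bookkeeping of the exponents of $(v_\epsilon-1)$ and $\epsilon$ together with a correct organization of the chain-rule expansion of $\partial_x^2 H(f)$. The only step that is not purely mechanical is recognizing that the cross term $\widetilde B\,\partial_x v_\epsilon\,\partial_x f$ has precisely the scaling needed to be split by Young's inequality between the $f^2$ and $(\partial_x f)^2$ contributions already present; and one must be careful to use $\delta<1$ at the very start to keep $v_\epsilon+\tau f$ uniformly bounded away from the congestion value $1$, which is what legitimizes every Taylor expansion and pins down the dependence of $C$ on $\delta$ and $v_+$.
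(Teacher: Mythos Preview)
Your proof is correct and follows essentially the same approach as the paper: Taylor-expand $\psi_\epsilon$ and its derivatives around $v_\epsilon$, use the hypothesis $\|f/(v_\epsilon-1)\|_\infty\le\delta<1$ to keep the intermediate argument bounded away from $1$, and then apply Lemmas~\ref{vepsestimate} and~\ref{psiestimate} to track the powers of $\epsilon$ and $(v_\epsilon-1)$. Your five-term decomposition of $\partial_x^2 H(f)$ matches the paper's exactly; the only cosmetic difference is that you write the remainders in integral form and spell out the Young's-inequality absorption of the cross term $2\widetilde B\,\partial_x v_\epsilon\,\partial_x f$, whereas the paper simply says the estimate ``yields \eqref{dx2H} by similar computations.''
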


\begin{proof}
    We first prove \eqref{H}. Recall that 
    \begin{equation*}
    H(f)=\psi_\epsilon(v_\epsilon+f)-\psi_\epsilon(v_\epsilon)-\psi_\epsilon'(v_\epsilon)f.
    \end{equation*}
    By Taylor's theorem,
    \begin{equation*}
        |H(f)|\leq \frac{f^2}{2}\sup_{|v-v_\epsilon|\leq |f|}|\psi_\epsilon^{(2)}(v)|.
    \end{equation*}
    By the hypothesis on $f$, for any $v$ such that $|v-v_\epsilon|\leq |f|$, it holds $0<(1-\delta)(v_\epsilon-1)<v-1<2v^+$.
    Lemma \ref{psiestimate} then implies that $|\psi_\epsilon^{(2)}(v)|\leq C\epsilon(v-1)^{-(\gamma+2)}$. Hence 
    \begin{equation*}
        |H(f)|\leq \frac{C\epsilon}{(v_\epsilon-1)^{\gamma+2}}f^2,
    \end{equation*}
    for some constant $C$ depending on $\gamma, v_+,\delta$. This is the first inequality. The other inequalities are proved in the same way. We differentiate $H$ with respect to $x$ and obtain
    \begin{equation*}
        \partial_xH(f)=\partial_xv_\epsilon\left(\psi_\epsilon'(v_\epsilon+f)-\psi_\epsilon'(v_\epsilon)-\psi_\epsilon^{(2)}(v_\epsilon)f\right)+\partial_xf\left(\psi_\epsilon'(v_\epsilon+f)-\psi_\epsilon'(v_\epsilon)\right).
    \end{equation*}
    which yields \eqref{dxH} by the same  arguments and Lemmas \ref{vepsestimate}, \ref{psiestimate}.
    We now differentiate a second time with respect to $x$:
    \begin{align*}
        \partial_{xx}H(f)= & \partial_{xx}v_\epsilon\left(\psi_\epsilon'(v_\epsilon+f)-\psi_\epsilon'(v_\epsilon)-\psi_\epsilon^{(2)}(v_\epsilon)f\right) + (\partial_xf)^2\psi_\epsilon^{(2)}(v_\epsilon+f)\\
        & + (\partial_x v_\epsilon)^2\left(\psi_\epsilon^{(2)}(v_\epsilon+f)-\psi_\epsilon^{(2)}(v_\epsilon)-\psi_\epsilon^{(3)}(v_\epsilon)f\right) \\
        & +2\partial_xf\partial_x v_\epsilon\left(\psi_\epsilon^{(2)}(v_\epsilon+f)-\psi_\epsilon^{(2)}(v_\epsilon)\right)+\partial_{xx}f\left(\psi_\epsilon'(v_\epsilon+f)-\psi_\epsilon'(v_\epsilon)\right).
    \end{align*}
     which yields \eqref{dx2H} by similar computations.
\end{proof}

\section{Well-posedness for the integrated system} \label{sectionGWP}

\subsection{Basic energy estimate}
The goal of this section is to prove the existence of strong solutions to the equation \begin{equation} \label{linearised}
        \partial_t V -\partial_x W_0 -\partial_x(\phi_\epsilon(v_\epsilon)\partial_xV)=\partial_x H(\partial_xV).
\end{equation}
Since $W_0$ is constant in time, we can control it for positive times through suitable assumptions on the initial data. Thus without loss of generality we treat $W_0$ as a perturbation of the system and prove the existence of a strong solution $V$ to \eqref{linearised}. Our strategy is to employ a fixed-point argument, which requires us to first derive appropriate energy estimates. The first (zero-th order) estimate is obtained by multiplying \eqref{linearised} by $V$:
\begin{equation} \label{VpreESTIMATES}
\begin{aligned}
    \frac{1}{2}\int_{\mathbb{R}} V^{2}(t)~dx + \int_{0}^{t} \int_{\mathbb{R}} \phi_{\epsilon}(v_{\epsilon}) |\partial_{x}V|^{2}~dxd{\tau} = \frac{1}{2}\int_{\mathbb{R}}V^{2}(0)~dx + \int_{0}^{t}\int_{\mathbb{R}} V (\partial_{x}H(\partial_x V) + \partial_{x}W_0)~dxd\tau.
\end{aligned}
\end{equation} 
From the left-hand side of the equation, we see that the natural energy space is $V\in L^\infty([0,T],L^2(\mathbb{R}))$ and  $\sqrt{\phi_\epsilon(v_\epsilon)}\partial_x V \in L^2((0,T)\times \mathbb{R})$. 
Let us now move to the right-hand side of \eqref{VpreESTIMATES} and try to close the estimate. For the term containing $W_0$, an integration by parts and Young's inequality yield
\begin{align*}
    \left|\int_0^t\int_{\mathbb{R}}V\partial_x W_0\right| = \left|-\int_0^t\int_{\mathbb{R}}W_0\partial_x V\right|\leq \frac{1}{2}\int_0^t\int_{\mathbb{R}}\phi_\epsilon(v_\epsilon)|\partial_x V|^2+\frac{1}{2}\int_0^t\int_{\mathbb{R}}\frac{W_0^2}{\phi_\epsilon(v_\epsilon)}.
\end{align*}
Hence this term can be bounded through suitable assumptions on $W_0$. For the term containing $H(\partial_x V)$, we also perform an integration by parts and use the estimate \eqref{H} on $H$ that we computed before:
\begin{align*}
    \left|\int_0^t\int_{\mathbb{R}}V\partial_x H(\partial_x V)\right| = \left|-\int_0^t\int_{\mathbb{R}}H(\partial_x V)\partial_x V\right| &\leq C\epsilon \int_0^t \int_{\mathbb{R}} \frac{|\partial_x V|^3}{(v_\epsilon-1)^{\gamma+2}}\\
    &\leq C\epsilon \left\|\frac{1}{\phi_\epsilon(v_\epsilon)(v_\epsilon-1)^{\gamma+1}}\right\|_{L^\infty_{t,x}}\int_0^t\int_{\mathbb{R}}\phi_\epsilon(v_\epsilon)|\partial_x V|^2\frac{|\partial_x V|}{v_\epsilon-1} .
\end{align*}
By recalling that $\phi_\epsilon(v_\epsilon)=\dfrac{\epsilon\gamma}{v_\epsilon(v_\epsilon-1)^{\gamma+1}}$ (see \eqref{diffusioncoefficient}), we obtain that
\begin{equation*}    \left|\int_0^t\int_{\mathbb{R}}V\partial_x H(\partial_x V)\right|  \leq C\int_0^t\int_{\mathbb{R}}\phi_\epsilon(v_\epsilon)|\partial_x V|^2\frac{|\partial_x V|}{v_\epsilon-1} \leq C\left\|\frac{\partial_x V}{v_\epsilon-1}\right\|_{L^\infty_{t,x}}\int_0^t\int_{\mathbb{R}}\phi_\epsilon(v_\epsilon)|\partial_x V|^2,
\end{equation*}
where $C=C(s,\gamma,\delta)$. In order to close the estimate, we need that the norm of the quantity $\partial_x V/(v_\epsilon-1)$ in $L^\infty([0,T]\times \mathbb{R})$ be small enough. The ideas of the proof of the well-posedness of \eqref{linearised} are thus the following:
\begin{itemize}
    \item Derive a-priori energy estimates for the quantities $\partial_x V/(v_\epsilon-1)$ and $\partial_x[\partial_x V/(v_\epsilon-1)]$.
    \item Use the fact that $\partial_x V/(v_\epsilon-1) \in L^\infty([0,T],H^1(\mathbb{R}))$ and the injection $H^1(\mathbb{R}) \hookrightarrow L^\infty(\mathbb{R}) $ to complete the estimates and bound these quantities in the same function spaces as chosen for $V$.
    \item Use a fixed-point argument to obtain the local-in-time existence and uniqueness of strong solutions to \eqref{linearised}.
\end{itemize}
\begin{remark}
    \label{stayuncongested}
    Note that the assumption that $\|\partial_x V/(v_\epsilon-1)\|_{L^\infty_{t,x}}$ is small, which enables to close the previous estimate, is also the one needed to bound the quantity $H(\partial_xV)$ and its derivatives (see Lemma \ref{Hestimate}). This assumption has a simple interpretation; by recalling that $\partial_x V=v-v_\epsilon$, we see that it prevents the perturbation $v$ from reaching 1. In other words, this assumption ensures that $\rho <1$, i.e. that the perturbation stays in the uncongested state.
\end{remark}
\begin{remark}
    As explained in the previous remark, it is crucial to bound the quantity $\partial_x V/(v_\epsilon-1)$. Note that adding the factor $1/(v_\epsilon-1)$ is not free, and the price to pay is that a lot of commutators appear and the equations become more complicated (see the next subsection). However, even if the weight $1/(v_\epsilon-1)$ is singular, the commutators are not difficult to bound. For instance, 
    \begin{equation*}
        \partial_x\left(\frac{\partial_x V}{v_\epsilon-1}\right)= \frac{\partial_x^2 V}{v_\epsilon-1}-\frac{v_\epsilon'}{(v_\epsilon-1)^2}\partial_x V.
    \end{equation*}
    From the estimate on $v_\epsilon'$ that we derived in Lemma \eqref{vepsestimate}, it follows that the coefficient $v_\epsilon'/(v_\epsilon-1)^2$ is bounded by $C(v_\epsilon-1)^{\gamma-1}/\epsilon$. Hence this coefficient is uniformly bounded in $t,x$ as soon as $\gamma \geq 1$, which is our hypothesis throughout the paper. The computations of the next subsections show that $\gamma\geq 1$ is the only hypothesis needed in order to obtain good bounds for every quantity which appears due to commutators. Furthermore, the factor $1/\epsilon$ in the coefficient $v_\epsilon'/(v_\epsilon-1)^2 $ of the commutator suggests that the operator $\partial_x $ scales as $1/\epsilon$ for solutions of the equations, i.e. that $\|\partial_x f\| \cong \|f\|/\epsilon$. This observation is reflected in the definition of the norm to be seen in Section \ref{constructglobalsolution}.
\end{remark}

\color{black}

\subsection{Formulation of the system for higher order estimates} In this subsection we describe our approach for deriving higher order energy estimates. Our estimates will heavily involve the quantity
\begin{equation}
    \label{defeta}
    \eta \equiv \eta(\partial_x V) := \frac{\partial_{x}V}{v_{\epsilon}-1}.
\end{equation} The evolution equation for $\eta$ reads as
\begin{equation}
    \begin{aligned}
        \partial_{t}\eta - \partial_{x}(\phi_{\epsilon}(v_{\epsilon}) \partial_{x}\eta) &= \frac{\partial_{x}^{2}W_0}{v_{\epsilon}-1} + \frac{\partial_{x}^{2}H}{v_{\epsilon}-1}  + \frac{s\eta v_{\epsilon}'}{v_{\epsilon}-1}\\[1ex] &+ \frac{\phi_{\epsilon}(v_{\epsilon})v_{\epsilon}' \partial_{x}^{2}V}{(v_{\epsilon}-1)^{2}} + \partial_{x} \left( \frac{\eta \phi_{\epsilon} (v_{\epsilon}) v_{\epsilon}'}{v_{\epsilon}-1} \right) + \frac{1}{v_{\epsilon}-1} \partial_{x} \left(\phi_{\epsilon}'(v_{\epsilon})v_{\epsilon}' \partial_{x}V \right),
    \end{aligned}
\end{equation} which may be expressed as \begin{equation} \label{etaLIN}
    \partial_{t} \eta - \partial_{x}(\phi_{\epsilon}(v_{\epsilon})\partial_{x}\eta)   =\mathcal{L}_{\epsilon}(\eta) + S_\epsilon(\partial_x V),
\end{equation} where  \begin{alignat*}{2}
    &\mathcal{L}_{\epsilon}(\eta) :=   \frac{s\eta v_{\epsilon}'}{v_{\epsilon}-1}  + \frac{\phi_{\epsilon}(v_{\epsilon})v_{\epsilon}'}{v_{\epsilon}-1}\left(\partial_{x}\eta + \frac{\eta v_{\epsilon}'}{v_{\epsilon}-1} \right)  + \partial_{x}\left( \frac{\eta \phi_{\epsilon}(v_{\epsilon})v_{\epsilon}'}{v_{\epsilon}-1}\right) + \frac{1}{v_{\epsilon}-1}\partial_{x} \left( \phi_{\epsilon}'(v_{\epsilon})v_{\epsilon}' (v_{\epsilon}-1) \eta \right), \\[1ex]
    &S_\epsilon (\partial_x V) := \frac{\partial_{x}^{2}[W_0+H(\partial_x V)]}{v_{\epsilon}-1}.
\end{alignat*}
Differentiating \eqref{etaLIN}, we obtain
\begin{equation} \label{dxetaLIN}
    \partial_{t}(\partial_{x}\eta) - \partial_{x}(\phi_{\epsilon}(v_{\epsilon})\partial_{x}^{2}\eta)  = \partial_{x}S_\epsilon(\partial_x V) +\mathcal{L}_{\epsilon}(\partial_{x}\eta) +  \mathcal{C}_\epsilon (\eta) ,
\end{equation}  where $  \mathcal{C}_\epsilon (\eta) := [\partial_x,\partial_x(\phi_\epsilon\partial_x\cdot)]\eta + [\partial_x,\mathcal{L}_\epsilon]\eta  = \partial_{x}(\partial_{x}\phi_{\epsilon}(v_{\epsilon})\partial_{x}\eta) + [\partial_x,\mathcal{L}_\epsilon] \eta$  appears due to commutators and \begin{equation*}
\begin{aligned}
[\partial_x,\mathcal{L}_\epsilon] \eta  &:=
  s \eta \partial_{x} \left( \frac{v_{\epsilon}'}{v_{\epsilon}-1} \right) + \partial_{x}\left(\frac{\phi_{\epsilon}(v_{\epsilon})v_{\epsilon}'}{v_{\epsilon}-1}\right)\partial_{x}\eta \\[1ex] &+ \partial_{x} \left( \phi_{\epsilon}(v_{\epsilon}) (v_{\epsilon}')^{2} \frac{1}{(v_{\epsilon}-1)^{2}}\right) \eta + \partial_{x} \left(\eta \partial_{x}\left( \frac{\phi_{\epsilon}(v_{\epsilon})v_{\epsilon}'}{v_{\epsilon}-1} \right) \right) \\[1ex] &- \frac{v_{\epsilon}'}{(v_{\epsilon}-1)^{2}} \partial_{x} (\phi_{\epsilon}'v_{\epsilon}'(v_{\epsilon}-1)\eta) + \frac{1}{v_{\epsilon}-1} \partial_{x} (\partial_{x}(\phi_{\epsilon}'(v_{\epsilon})v_{\epsilon}' (v_{\epsilon}-1))\eta).
\end{aligned}
\end{equation*} 

The equations \eqref{etaLIN} and \eqref{dxetaLIN} will be used to derive key estimates that allow us to eventually conclude the fixed-point argument.

Note that the operators $\mathcal{L}_\epsilon$ and $\mathcal{C}_\epsilon$, that appear in equations \eqref{etaLIN} and \eqref{dxetaLIN}, are linear. In order to improve readability, we give bounds for these two operators in the following lemma.
\begin{lemma}
    \label{linearoperatorestimates}
    There exists $C=C(\gamma,s,v_+)>0$ such that, for every $\alpha\in (0,1)$, for every $\epsilon>0$, for any $\eta$ sufficiently regular, there holds
    \begin{equation}
        \label{linearestimate1}
        \left|\int_0^t\int_{\mathbb{R}}\mathcal{L}_\epsilon(\eta)\eta \right| \leq \alpha \int_0^t\int_{\mathbb{R}}\phi_\epsilon(v_\epsilon)(\partial_x\eta)^2 + \frac{C}{\alpha\epsilon^2} \int_0^t\int_{\mathbb{R}}\phi_\epsilon(v_\epsilon)(\partial_x V)^2
    \end{equation}
    and 
    \begin{equation}
        \label{linearestimate2}
        \left|\int_0^t\int_{\mathbb{R}}\mathcal{C}_\epsilon(\eta)\partial_x\eta \right| \leq \alpha \int_0^t\int_{\mathbb{R}}\phi_\epsilon(v_\epsilon)(\partial_x^2\eta)^2 + \frac{C}{\alpha\epsilon^2} \int_0^t\int_{\mathbb{R}}\phi_\epsilon(v_\epsilon)\left((\partial_x\eta)^2+\frac{(\partial_x V^2)}{\epsilon^2}\right).
    \end{equation}
\end{lemma}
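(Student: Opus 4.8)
The plan is to estimate each term appearing in $\mathcal{L}_\epsilon(\eta)$ and $\mathcal{C}_\epsilon(\eta)$ individually, using the preliminary estimates of Lemma \ref{vepsestimate} to control all the coefficients built out of $v_\epsilon'$, $\phi_\epsilon(v_\epsilon)$, $\phi_\epsilon'(v_\epsilon)$ and powers of $(v_\epsilon-1)$, and then absorbing the top-order pieces into the good parabolic terms $\int_0^t\int_\mathbb{R}\phi_\epsilon(v_\epsilon)(\partial_x\eta)^2$ (for \eqref{linearestimate1}) and $\int_0^t\int_\mathbb{R}\phi_\epsilon(v_\epsilon)(\partial_x^2\eta)^2$ (for \eqref{linearestimate2}) via Young's inequality with parameter $\alpha$.

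First, for \eqref{linearestimate1}, I would multiply $\mathcal{L}_\epsilon(\eta)$ by $\eta$, integrate, and handle the four groups of terms. The purely zeroth-order-in-$\eta$ terms such as $s\eta v_\epsilon'/(v_\epsilon-1)$ and $\phi_\epsilon(v_\epsilon)(v_\epsilon')^2\eta/(v_\epsilon-1)^2$ contribute $\int\int (\text{bounded coeff})\,\eta^2$; the key point is that the relevant coefficients are uniformly bounded in $t,x$ when $\gamma\ge1$ (exactly as illustrated for $v_\epsilon'/(v_\epsilon-1)^2$ in the remark preceding \eqref{defeta}), and one then writes $\eta^2 = (\partial_x V)^2/(v_\epsilon-1)^2$ and uses $1/(v_\epsilon-1)^2 \le C\,\phi_\epsilon(v_\epsilon)/(\epsilon)\cdot(v_\epsilon-1)^{\gamma-1}\le C\phi_\epsilon(v_\epsilon)/\epsilon^2$ (up to harmless factors) to express the bound in terms of $\frac{C}{\epsilon^2}\int\int\phi_\epsilon(v_\epsilon)(\partial_x V)^2$, matching the right-hand side of \eqref{linearestimate1}. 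The terms containing $\partial_x\eta$, namely $\phi_\epsilon(v_\epsilon)v_\epsilon'\partial_x\eta/(v_\epsilon-1)$ and the pieces of $\partial_x\big(\eta\phi_\epsilon(v_\epsilon)v_\epsilon'/(v_\epsilon-1)\big)$ that fall on $\partial_x\eta$ after (if convenient) an integration by parts moving a derivative off a coefficient, are bounded by Cauchy--Schwarz: $|\text{coeff}\cdot\partial_x\eta\cdot\eta|\le \alpha\phi_\epsilon(v_\epsilon)(\partial_x\eta)^2 + \frac{1}{4\alpha}\frac{\text{coeff}^2}{\phi_\epsilon(v_\epsilon)}\eta^2$; here one checks that $\text{coeff}^2/\phi_\epsilon(v_\epsilon)$ is again of the form (bounded)$\cdot\phi_\epsilon(v_\epsilon)/\epsilon^2\cdot(v_\epsilon-1)^2$, so after rewriting $\eta^2 = (\partial_x V)^2/(v_\epsilon-1)^2$ one recovers $\frac{C}{\alpha\epsilon^2}\int\int\phi_\epsilon(v_\epsilon)(\partial_x V)^2$. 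The last term $\frac{1}{v_\epsilon-1}\partial_x\big(\phi_\epsilon'(v_\epsilon)v_\epsilon'(v_\epsilon-1)\eta\big)$ is treated the same way, expanding the derivative by the product rule and noting $|\phi_\epsilon'(v_\epsilon)|\le C\phi_\epsilon(v_\epsilon)/(v_\epsilon-1)$ (directly from \eqref{diffusioncoefficient}).

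For \eqref{linearestimate2} the strategy is identical but one order higher: $\mathcal{C}_\epsilon(\eta) = \partial_x(\partial_x\phi_\epsilon(v_\epsilon)\partial_x\eta) + [\partial_x,\mathcal{L}_\epsilon]\eta$ is paired with $\partial_x\eta$. The leading commutator $\partial_x(\partial_x\phi_\epsilon(v_\epsilon)\partial_x\eta)$, after integration by parts against $\partial_x\eta$, produces $-\int\int\partial_x\phi_\epsilon(v_\epsilon)(\partial_x^2\eta)(\partial_x\eta)$ type terms plus lower-order ones; since $\partial_x\phi_\epsilon(v_\epsilon) = \phi_\epsilon'(v_\epsilon)v_\epsilon'$ is, by Lemmas \ref{vepsestimate}--\ref{psiestimate}-style bounds, comparable to $\phi_\epsilon(v_\epsilon)\cdot(v_\epsilon-1)^{\gamma}/\epsilon$ (hence $\le C\phi_\epsilon(v_\epsilon)/\epsilon$ up to bounded factors), Young's inequality gives $\alpha\phi_\epsilon(v_\epsilon)(\partial_x^2\eta)^2 + \frac{C}{\alpha\epsilon^2}\phi_\epsilon(v_\epsilon)(\partial_x\eta)^2$. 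Every term of $[\partial_x,\mathcal{L}_\epsilon]\eta$ is a coefficient (a product of $v_\epsilon$-derivatives and powers of $(v_\epsilon-1)$, always with at least the same $1/\epsilon$ weighting) times $\eta$, $\partial_x\eta$, or $\partial_x^2\eta$; one bounds the $\partial_x^2\eta$-terms by Young against the good term, the $\partial_x\eta$-terms directly, and the $\eta$-terms by rewriting $\eta^2=(\partial_x V)^2/(v_\epsilon-1)^2$ and bounding $1/(v_\epsilon-1)^2$ against $\phi_\epsilon(v_\epsilon)/\epsilon^2$ as before, which yields the $\frac{C}{\alpha\epsilon^2}\int\int\phi_\epsilon(v_\epsilon)\big((\partial_x\eta)^2 + (\partial_x V)^2/\epsilon^2\big)$ on the right of \eqref{linearestimate2}. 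Summing the finitely many contributions and relabelling constants gives the claim.

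The main obstacle — really the only subtle point — is the bookkeeping of $\epsilon$- and $(v_\epsilon-1)$-powers: one must verify that after every integration by parts and every application of Young's inequality the coefficient that multiplies $\eta^2$ (resp. $(\partial_x\eta)^2$) divided by $\phi_\epsilon(v_\epsilon)$ is indeed bounded by $C(v_\epsilon-1)^2/\epsilon^2$ (resp. $C/\epsilon^2$) uniformly in $t,x$ for all $\gamma\ge 1$, so that rewriting back in terms of $\partial_x V$ produces exactly the weights on the right-hand sides. This is where the hypothesis $\gamma\ge1$ is used — as noted after \eqref{defeta}, it is precisely the threshold that keeps the worst commutator coefficient $v_\epsilon'/(v_\epsilon-1)^2\le C(v_\epsilon-1)^{\gamma-1}/\epsilon$ bounded — and one should track that no term degrades this. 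Beyond that, the computation is routine and uses only the ODE \eqref{ODE}, the explicit form \eqref{diffusioncoefficient} of $\phi_\epsilon$, and Lemmas \ref{vepsestimate} and \ref{psiestimate}.
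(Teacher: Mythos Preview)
Your approach is essentially the same as the paper's: a term-by-term decomposition of $\int\mathcal{L}_\epsilon(\eta)\eta$ into four pieces and of $\int\mathcal{C}_\epsilon(\eta)\partial_x\eta$ into seven pieces, each handled by the coefficient bounds from Lemmas~\ref{vepsestimate}--\ref{psiestimate}, Cauchy--Schwarz and Young's inequality with parameter~$\alpha$. One small slip worth flagging: your inequality $1/(v_\epsilon-1)^2 \le C\phi_\epsilon(v_\epsilon)/\epsilon^2$ is off by one power of~$\epsilon$ (from \eqref{diffusioncoefficient} one has $1/(v_\epsilon-1)^2 = v_\epsilon\phi_\epsilon(v_\epsilon)(v_\epsilon-1)^{\gamma-1}/(\epsilon\gamma) \le C\phi_\epsilon(v_\epsilon)/\epsilon$); the missing $1/\epsilon$ in fact comes from the coefficients themselves (e.g.\ $v_\epsilon'/(v_\epsilon-1)\le C(v_\epsilon-1)^\gamma/\epsilon\le C/\epsilon$), which are not merely bounded but scale like $1/\epsilon$, so the final $C/\epsilon^2$ is correct once you track this properly --- exactly the bookkeeping you yourself identify as the only subtle point.
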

We defer the proof of this lemma to Appendix \ref{proofoflemma}.

\subsection{Construction of global strong solutions}
\label{constructglobalsolution}
In this subsection we lay out the framework for our fixed point argument before finally stating the result of this section, i.e. the well posedness of \eqref{linearised} in an appropriate space. Firstly, we fix $T>0$ arbitrary. For $t\in[0,T]$, we define the energies
\begin{equation} \label{E0}
    E_{0}(t; V) := \int_{\mathbb{R}} V^{2}(t)~dx, \quad D_{0}(t;V) := \int_{\mathbb{R}} \phi_{\epsilon}(v_{\epsilon})|\partial_{x}V(t)|^{2}~dx,
\end{equation} as well as
\begin{equation} \label{Ek}
    \begin{aligned}
        E_{k}(t;V) := \int_{\mathbb{R}} |\partial_{x}^{k-1}\eta|^{2}(t)~dx, \quad D_{k}(t;V) := \int_{\mathbb{R}} \phi_{\epsilon}(v_{\epsilon}) |\partial_{x}^{k}\eta|^{2}(t)~dx, \quad \text{ for } k=1,2.
    \end{aligned}
\end{equation}
 Then given $V_{1}$, we introduce the system
\begin{equation}
    \label{linearsystem}
        \left\{
    \begin{aligned}
        &\partial_t V_{2} -\partial_x W_0 -\partial_x(\phi_\epsilon(v_\epsilon)\partial_xV_{2})=\partial_x H(\partial_xV_{1}),\\
        &V_{2}|_{t=0} = V_{0},
    \end{aligned}
    \right.
\end{equation} and the application $\mathcal{A}^{\epsilon} : V_{1} \mapsto V_{2},$ where $V_{1}, V_{2} \in \mathcal{X}$ and \begin{equation*} 
    \mathcal{X} = \left\{ V : E_{k}(t, V) \in L^{\infty}(0,T) \text{ and } D_{k}(t; V) \in L^{1}(0,T)~ \text{ for } k=0,1,2.  \right\}, 
\end{equation*}  
\begin{equation*}
    \|V\|_{\mathcal{X}}^{2} := \sup_{t \in [0,T]} \left( \sum_{k=0}^{2}c_{k} \epsilon^{2k} \left[ E_{k}(t; V(t)) + \int_{0}^{t}D_{k}(\tau; V(\tau))~d\tau     \right]  \right),
\end{equation*} where $c_{k}=c_{k}(s,\gamma, v_{+})$, for $k=0,1,2$, are constants independent of $\epsilon$ which are to be determined. For a proof that the map $\mathcal{A}^\epsilon$ is well defined, see Appendix \ref{appendix}. For $\delta > 0$ we define the ball
\begin{equation}
    B_{\delta} := \left\{ V \in \mathcal{X} : \|V\|_{\mathcal{X}} < \delta \epsilon^{3/2}  \right\}.
\end{equation}
The remainder of this section aims to prove the following result.
\begin{proposition} \label{PROPstabilitycontraction}
    Assume that for some $\delta_{0}=\delta_{0}(s,v_{+}, \gamma) > 0$ it holds
        \begin{equation}  \label{IDassumptionSTABILITY} 
        \begin{aligned}
         \sum_{k=0}^2 \left(c_k \epsilon^{2k} E_{k}(0; V_{2})+\epsilon^{2k-1}\|\sqrt{x}\partial_x^k W_0\|_{L^2(\mathbb{R}_+)}^2\right) + \|W_0\|_{L^2_x}^2 + \left(\frac{T}{\epsilon}\right)^{1/\gamma}\left(\epsilon^2\|\partial_x W_0\|^2_{L^2_x}+\epsilon^{4} \|\partial_{x}^{2}W_{0}\|_{L^{2}_{x}}^{2}\right)  \leq \delta_0 \epsilon^3,
         \end{aligned}
    \end{equation}
    where $c_{k} = c_{k}(s,\gamma, v_{+})$ for $k=0,1,2$ are positive constants. Then there exists $\delta = \delta(s,\gamma,v_+)$ such that
    \begin{enumerate}
        \item the ball $B_{\delta}$ is stable by $\mathcal{A}^{\epsilon}$,
        \item the map $\mathcal{A}^{\epsilon}$ is a contraction on $B_{\delta}$.
    \end{enumerate}
    Consequently, $\mathcal{A}^{\epsilon}$ has a unique fixed point in $B_{\delta}$.
\end{proposition}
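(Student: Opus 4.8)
The plan is a Banach fixed-point argument: one shows that $\mathcal{A}^{\epsilon}$ maps the (closed) ball $\overline{B_{\delta}}$ into itself and is a contraction there for $\|\cdot\|_{\mathcal{X}}$, both facts coming from energy estimates for the linear parabolic problem \eqref{linearsystem}. The single quantitative input that makes everything work is that membership $V_{1}\in B_{\delta}$ forces smallness of the crucial $L^{\infty}$ norm: from $\|V_{1}\|_{\mathcal{X}}^{2}<\delta^{2}\epsilon^{3}$ one reads off $E_{1}(t;V_{1})<\delta^{2}\epsilon/c_{1}$ and $E_{2}(t;V_{1})<\delta^{2}/(c_{2}\epsilon)$, so by the one-dimensional interpolation $\|\eta\|_{L^{\infty}_{x}}^{2}\lesssim\|\eta\|_{L^{2}}\|\partial_{x}\eta\|_{L^{2}}$ one gets $\|\eta(\partial_{x}V_{1})\|_{L^{\infty}_{t,x}}\le C(s,\gamma,v_{+})\,\delta$. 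For $\delta$ small this lies below the threshold of Lemma \ref{Hestimate}, so \eqref{H}, \eqref{dxH}, \eqref{dx2H} are available with $f=\partial_{x}V_{1}$; in particular (since $\partial_{x}V=v-v_{\epsilon}$) the perturbation stays uncongested.

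For stability of the ball, given $V_{1}\in B_{\delta}$ I set $V_{2}=\mathcal{A}^{\epsilon}(V_{1})$ and perform three energy estimates on \eqref{linearsystem}: the zero-th order one by testing against $V_{2}$ (as in \eqref{VpreESTIMATES}); the first-order one by testing the $\eta_{2}$-equation \eqref{etaLIN}, with source $S_{\epsilon}(\partial_{x}V_{1})$, against $\eta_{2}$; and the second-order one by testing \eqref{dxetaLIN}, again with $V_{1}$ in the source, against $\partial_{x}\eta_{2}$. The $\mathcal{L}_{\epsilon}$- and $\mathcal{C}_{\epsilon}$-terms are absorbed via Lemma \ref{linearoperatorestimates}: after fixing the parameter $\alpha$ small, these produce $\alpha$-fractions of $D_{1}(V_{2}),D_{2}(V_{2})$, kept on the left, plus lower-order dissipations $D_{0}(V_{2}),D_{1}(V_{2})$ weighted by $C/(\alpha\epsilon^{2})$. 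The $W_{0}$-part of the source, after integrations by parts and Young's inequality, is bounded by the quantities on the left of \eqref{IDassumptionSTABILITY} — this uses the decay of $v_{\epsilon}$ from Proposition \ref{travelingsolution} and the weighted hypotheses $\sqrt{x}\,\partial_{x}^{k}W_{0}\in L^{2}(\mathbb{R}_{+})$, and it is the only source of $T$-dependence, yielding the factor $(T/\epsilon)^{1/\gamma}$. The $H(\partial_{x}V_{1})$-part is estimated by \eqref{H}, \eqref{dxH}, \eqref{dx2H}; every resulting term carries a factor $\|\eta(\partial_{x}V_{1})\|_{L^{\infty}_{t,x}}\lesssim\delta$ times some $\int_{0}^{t}D_{k}(V_{1})\le\|V_{1}\|_{\mathcal{X}}^{2}/c_{k}$, hence is $\lesssim\delta^{3}\epsilon^{3}$. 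Multiplying the three estimates by $c_{0},\,c_{1}\epsilon^{2},\,c_{2}\epsilon^{4}$ respectively, summing, and choosing the constants in the order $c_{2}$, then $c_{1}\gg c_{2}$, then $c_{0}\gg c_{1}$ (each depending only on $s,\gamma,v_{+}$ and $\alpha$), all mismatched dissipations get absorbed and one obtains $\|V_{2}\|_{\mathcal{X}}^{2}\le C_{1}\delta_{0}\epsilon^{3}+C_{2}\delta^{3}\epsilon^{3}$. Choosing first $\delta$ with $C_{2}\delta^{3}\le\tfrac12\delta^{2}$ and then $\delta_{0}$ with $C_{1}\delta_{0}\le\tfrac12\delta^{2}$ gives $\|V_{2}\|_{\mathcal{X}}<\delta\epsilon^{3/2}$, i.e. $V_{2}\in B_{\delta}$.

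For the contraction, given $V_{1},V_{1}'\in B_{\delta}$ I look at $\bar V:=\mathcal{A}^{\epsilon}(V_{1})-\mathcal{A}^{\epsilon}(V_{1}')$, which solves \eqref{linearsystem} with zero initial datum, no $W_{0}$ term (it cancels), and source $\partial_{x}\big(H(\partial_{x}V_{1})-H(\partial_{x}V_{1}')\big)$. Writing $H(f)-H(g)=(f-g)\int_{0}^{1}H'\big(g+\theta(f-g)\big)\,d\theta$ with $H'(h)=\psi_{\epsilon}'(v_{\epsilon}+h)-\psi_{\epsilon}'(v_{\epsilon})$ (and analogous formulas for the $x$-derivatives of the difference), the same computations that prove Lemma \ref{Hestimate} yield ``difference'' versions of \eqref{H}, \eqref{dxH}, \eqref{dx2H} in which the quadratic factor $f^{2}$ is replaced by $\big(|\partial_{x}V_{1}|+|\partial_{x}V_{1}'|\big)\,|\partial_{x}(V_{1}-V_{1}')|$, hence with a gain $\lesssim\|\eta(\partial_{x}V_{1})\|_{L^{\infty}}+\|\eta(\partial_{x}V_{1}')\|_{L^{\infty}}\lesssim\delta$. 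Repeating the three energy estimates for $\bar V$ then gives $\|\bar V\|_{\mathcal{X}}\le C\delta\,\|V_{1}-V_{1}'\|_{\mathcal{X}}$, so $\mathcal{A}^{\epsilon}$ is a contraction once $\delta$ is small enough; since $\overline{B_{\delta}}$ is complete for $\|\cdot\|_{\mathcal{X}}$, Banach's theorem gives the unique fixed point. (Alternatively the contraction may be run in the weaker metric controlling only $E_{0},D_{0}$, on which $B_{\delta}$ is closed, hence complete, the fixed point then inheriting the $\mathcal{X}$-bound from the first step.)

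I expect the main obstacle to be the bookkeeping in the first- and second-order estimates: every term generated by $\mathcal{L}_{\epsilon}$, $\mathcal{C}_{\epsilon}$ and $\partial_{x}^{2}H$ — all carrying the singular weights $(v_{\epsilon}-1)^{-m}$ and $\phi_{\epsilon}(v_{\epsilon})\to\infty$ — must be matched, using Proposition \ref{travelingsolution}, Lemmas \ref{vepsestimate}--\ref{Hestimate} and Lemma \ref{linearoperatorestimates}, either to a constant times a left-hand dissipation with strictly higher weight in the hierarchy $c_{0}\gg c_{1}\gg c_{2}$, or to $\delta_{0}\epsilon^{3}$, or to $\delta\|V_{1}\|_{\mathcal{X}}^{2}$. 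Checking that all powers of $\epsilon$ balance in this process is exactly what dictates the $\epsilon^{2k}$ weighting of the energies; the hypothesis $\gamma\ge1$ enters precisely to keep commutator coefficients such as $v_{\epsilon}'/(v_{\epsilon}-1)^{2}\le C(v_{\epsilon}-1)^{\gamma-1}/\epsilon$ uniformly bounded; and the $T$-dependence must be confined to the $W_{0}$-terms so as to reproduce the factor $(T/\epsilon)^{1/\gamma}$ in \eqref{IDassumptionSTABILITY}.
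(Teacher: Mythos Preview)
Your proposal is correct and follows the same strategy as the paper: three weighted energy estimates (for $V_{2}$, $\eta_{2}$, $\partial_{x}\eta_{2}$) combined via the $c_{k}\epsilon^{2k}$ hierarchy, with Lemma~\ref{linearoperatorestimates} handling the linear commutators and Lemma~\ref{Hestimate} (resp.\ its difference analogue) handling the nonlinearity for stability (resp.\ contraction). The only presentational difference is that the paper keeps the nonlinear $H$-contribution in the form $\frac{C'}{\epsilon^{3/2}}\|V_{1}\|_{\mathcal{X}}^{2}\|V_{2}\|_{\mathcal{X}}$---the factor $\|V_{2}\|_{\mathcal{X}}$ necessarily appearing since one is testing against $V_{2}$, $\eta_{2}$, $\partial_{x}\eta_{2}$---and removes it by a final Young inequality, whereas your sketch implicitly absorbs an extra small dissipation of $V_{2}$ term by term to reach the stated bound $C_{2}\delta^{3}\epsilon^{3}$; both routes close.
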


\subsection{Stability of the ball \texorpdfstring{$B_{\delta}$}{Lg}}
We let $\delta > 0$, $V_{1} \in B_{\delta}$ and $V_{2} := \mathcal{A}^{\epsilon}(V_{1})$. The aim of this subsection is to show that we can find $\delta>0$ such that $V_{2} \in B_{\delta}$. First, let us clarify the meaning of some notation we will use in this section.
\begin{itemize}
    \item We denote by $C= C(s,\gamma, v_{+})$ an arbitrary positive constant independent of $\epsilon$. This constant may change, even within the same line. We also denote $C' = C'(s,\gamma,v_+,c_0,c_1,c_2)$ another positive constant that may additionally depend on the constants $c_0,c_1,c_2$ appearing in the definition of the norm $\|\cdot\|_{\mathcal{X}}$.
    \item To ease notation, we will often shorten $\phi_{\epsilon}(v_{\epsilon}), \psi_{\epsilon}(v_{\epsilon})$ to $\phi_{\epsilon}, \psi_{\epsilon}$ respectively. For $i=1,2$ we also adopt the notation $\eta_{i} := \frac{\partial_{x}V_{i}}{v_{\epsilon}-1}$.
    \item We adopt the notation $X_{t}Y_{x} := X(0,T; Y(\mathbb{R}))$ for appropriate function spaces $X$ and $Y$.
\end{itemize}
Next, we make note of some useful estimates which will be repeatedly used in the remainder of the paper. We have up to a constant independent of $\epsilon$, \begin{align}  \nonumber &\phi_\epsilon(v_\epsilon) = \frac{\epsilon\gamma}{v_\epsilon(v_\epsilon-1)^{\gamma+1}},\\
\label{phiderivatives}
    &\phi_{\epsilon}^{(k)}(v_{\epsilon}) 
    \cong \frac{\phi_{\epsilon}(v_{\epsilon})}{(v_{\epsilon}-1)^{k}}, \\[1ex] & \partial_{x}v_{\epsilon} = v_{\epsilon}' ~\cong ~ \frac{1}{\epsilon}(v_{\epsilon}-1)^{\gamma+1}. \label{v'}
\end{align} As a consequence of \eqref{phiderivatives} and \eqref{v'}, we have $(v_{\epsilon}-1)^{2}\partial_{x}v_{\epsilon} \le C/\epsilon$ and $(v_{\epsilon}-1)^{2}/ \phi_{\epsilon} \le C/\epsilon$, provided $\gamma \ge 1$. Let us now note a Gagliardo-Nirenberg-Sobolev interpolation inequality which we will use. For $f \in H^1(\mathbb{R})$, 
\begin{align} \label{GNinf}
    &\|f\|_{L^{\infty}_{x}} \le C\|\partial_{x}f\|_{L^{2}_{x}}^{\frac{1}{2}} \|f\|_{L^{2}_{x}}^{\frac{1}{2}}, 
\end{align}
\color{black}
Finally, we mention some guidelines which our estimates will follow. \begin{itemize}
    \item We will often artificially place a factor of $\phi_{\epsilon}$ into the integral in order to obtain an expression which is a function of the energies \eqref{E0}-\eqref{Ek}. This results in the multiplication of a factor of $1/\epsilon$. For example, to estimate the term $ A:= \int_{0}^{t} \int_{\mathbb{R}} \partial_{x}V \partial_{x}\eta~dxd\tau$, we have
    \begin{equation}
        A \le \left\| \frac{1}{\phi_{\epsilon}}\right\|_{L^{\infty}_{t,x}} \int_{0}^{t} \int_{\mathbb{R}} | \sqrt{\phi_{\epsilon}} \partial_{x}V| |\sqrt{\phi_{\epsilon}} \partial_{x}\eta|~dxd\tau \le \frac{C}{\epsilon} \|\sqrt{\phi_{\epsilon}} \partial_{x}V\|_{L^{2}_{t,x}} \|\sqrt{\phi_{\epsilon}} \partial_{x}\eta\|_{L^{2}_{t,x}}.
    \end{equation}
    \item To estimate terms involving the expression $\partial_{x}^{2}V / (v_{\epsilon}-1)$, we will make use of the identity \begin{equation} \label{dx2V}
        \frac{\partial_{x}^{2}V}{v_{\epsilon}-1} = \partial_{x}\eta + \frac{v_{\epsilon}'\partial_{x}V}{(v_{\epsilon}-1)^{2}}.
    \end{equation}
\end{itemize}
\subsubsection{Estimates for \texorpdfstring{$k=0$}{Lg}}
Fix $t \in [0,T]$. We multiply \eqref{linearsystem} by $V_2$ and integrate on $\mathbb{R}\times(0,t)$ to obtain as before:
\begin{equation}
    \label{k=0mixedestimate}
    \frac{1}{2}\int_{\mathbb{R}}V_2^2(t)dx +\int_0^t\int_{\mathbb{R}}\phi_\epsilon|\partial_x V_2|^2dxd\tau = \frac{1}{2}\int_{\mathbb{R}}V_2^2(0)dx+\int_0^t\int_{\mathbb{R}}V_2(\partial_x H(\partial_x V_1)+\partial_x W_0)dxd\tau.
\end{equation}
\color{black}
Integrating by parts, we have
\begin{equation}
    \begin{aligned}
     \int_{0}^{t}\int_{\mathbb{R}} V_{2} \partial_{x}(H(\partial_{x}V_{1}) + W_0) =    - \int_{0}^{t} \int_{\mathbb{R}} H(\partial_{x}V_{1})\partial_{x}V_{2}  -\int_{0}^{t} \int_{\mathbb{R}} W_0 \partial_{x}V_2=: G_{1} + G_{2}.
     \end{aligned}
\end{equation}
     Using the estimate on $H$ and \eqref{GNinf}, and introducing $\eta_2 = \frac{\partial_x V_2}{v_\epsilon-1}$,
\begin{equation}
    \begin{aligned}     
     |G_{1}| &\le C\int_{0}^{t} \int_{\mathbb{R}} \phi_{\epsilon}(v_{\epsilon})  \frac{|\partial_{x}V_{1}|^{2}}{v_{\epsilon}-1} |\partial_{x}V_{2}| = C \int_{0}^{t}\int_{\mathbb{R}} \phi_{\epsilon}|\partial_{x}V_{1}|^{2} |\eta_{2}| \\[1ex] 
     &\le C \|\eta_{2}\|_{L^{\infty}_{t,x}} \|\sqrt{\phi_{\epsilon}}\partial_{x}V_{1}\|_{L^{2}_{t,x}}^{2} \le C\|\eta_2\|_{L^{\infty}_{t}L^{2}_{x}}^{1/2} \|\partial_{x}\eta_2\|_{L^{\infty}_{t}L^{2}_{x}}^{1/2}\|\sqrt{\phi_{\epsilon}}\partial_{x}V_{1}\|_{L^{2}_{t,x}}^{2} \\[1ex] 
     &\le \frac{C'}{\epsilon^{3/2}} \|V_{1}\|_{\mathcal{X}}^{2}\|V_{2}\|_{\mathcal{X}}.
    \end{aligned}
\end{equation}
\begin{remark}
    Note the following estimate, which can be obtained from \eqref{GNinf} and will also be used later on:
    \begin{equation}
        \|\eta\|_{L^\infty_{t,x}}\leq \frac{C'}{\epsilon^{3/2}}\|V\|_{\mathcal{X}}.
    \end{equation}
\end{remark}
We now estimate $G_2$. Young's inequality gives that
\begin{align*}
    \left|\int_{0}^{t} \int_{\mathbb{R}} W_0 \partial_xV_{2}\right| \leq \frac{1}{4}\int_{0}^{t} D_{0}(\tau; V_{2})~d\tau +\int_0^t\int_\mathbb{R}\frac{W_0^2}{\phi_\epsilon}.
\end{align*}
Using Fubini and the fact that $W_0$ is independant of time, one then has 
\begin{align*}
    \int_0^t\int_\mathbb{R}\frac{W_0^2}{\phi_\epsilon} = \int_\mathbb{R}W_0^2\int_0^t\frac{1}{\phi_\epsilon} = \int_\mathbb{R}W_0^2\int_0^t\frac{1}{\phi_\epsilon} \mathbf{1}_{\xi <0}\,d\tau dx + \int_\mathbb{R}W_0^2\int_0^t\frac{1}{\phi_\epsilon} \mathbf{1}_{\xi >0}\,d\tau dx,
\end{align*}
where $\xi:= x-s\tau$. By using Equation \eqref{asymptoticbound}, one then obtain for the first integral the estimate
\begin{equation*}
    \int_\mathbb{R}W_0^2\int_0^t\frac{1}{\phi_\epsilon} \mathbf{1}_{\xi <0}\,d\tau dx \leq \frac{C}{\epsilon}\int_\mathbb{R}W_0^2\int_0^{+\infty}\left(B-\frac{A_1}{\epsilon}\xi\right)^{-1-1/\gamma} \mathbf{1}_{\xi <0}\,d\tau dx \leq C\int_{\mathbb{R}}W_0^2\,dx,
\end{equation*}
and for the second integral 
\begin{align*}
    \int_\mathbb{R}W_0^2\int_0^t\frac{1}{\phi_\epsilon} \mathbf{1}_{\xi >0}\,d\tau dx \leq \frac{C}{\epsilon}\int_\mathbb{R}W_0^2\int_0^{x/s}\mathbf{1}_{\xi>0}d\tau dx=\frac{C}{\epsilon}\int_{\mathbb{R}}W_0^2 x \mathbf{1}_{x\geq 0}\,dx.
\end{align*} 
Returning to \eqref{k=0mixedestimate}, we get 
\begin{align*}
      &\int_{\mathbb{R}} V^{2}_2(t)~dx + \frac{3}{2}\int_{0}^{t} \int_{\mathbb{R}} \phi_{\epsilon}(v_{\epsilon}) |\partial_{x}V_2|^{2}~dxd\tau \\
      \le & \int_{\mathbb{R}}V^{2}_2(0)~dx + C \|W_{0}\|_{L^{2}_{x}}^{2} +\frac{C}{\epsilon}\|\sqrt{x}W_0\|_{L^2(\mathbb{R}_+)}+ \frac{C'}{\epsilon^{3/2}} \|V_{1}\|_{\mathcal{X}}^{2}\|V_{2}\|_{\mathcal{X}},
\end{align*}
i.e.
\begin{equation} 
\label{VpostESTIMATESimproved}
    E_{0}(t; V_{2}) + \frac{3}{2}\int_{0}^{t}D_{0}(\tau;V_{2})~d\tau \le E_{0}(0; V_{2}) + C \|W_0\|_{L^{2}_{x}}^{2}+\frac{C}{\epsilon}\|\sqrt{x}W_0\|^2_{L^2(\mathbb{R}_+)} + \frac{C'}{\epsilon^{3/2}} \|V_{1}\|_{\mathcal{X}}^{2}\|V_{2}\|_{\mathcal{X}} .
\end{equation}
\begin{remark}
    Note that \eqref{VpostESTIMATESimproved} does not depend on $T>0$. This is due to the assumption that $\sqrt{x}W_0\in L^2(\mathbb{R}_+)$. If this assumption is removed, one can still obtain a bound, but it is not global in time anymore. Indeed, we can simply apply Holder and Young's inequality to get
 \begin{equation*}
     \begin{aligned}
         \left|\int_{0}^{t} \int_{\mathbb{R}} W_0 \partial_{x}V_{2} \right| &\le \| \phi_{\epsilon}^{-1/2}\|_{L^{\infty}_{t,x}} \|\sqrt{\phi_{\epsilon}} \partial_{x}V_{2}\|_{L^{2}_{t,x}} \|W_0\|_{L^{2}_{t,x}} \le \frac{Ct}{\epsilon} \|W_{0}\|_{L^{2}_{x}}^{2} + \frac{1}{4} \int_{0}^{t} D_{0}(\tau; V_{2})~d\tau.
     \end{aligned}
 \end{equation*}
 Thus instead of \eqref{VpostESTIMATESimproved} we obtain 
\begin{equation}  \label{VpostESTIMATES}
    E_{0}(t; V_{2}) + \frac{3}{2}\int_{0}^{t}D_{0}(\tau;V_{2})~d\tau \le E_{0}(0; V_{2}) +  \frac{Ct}{\epsilon}\|W_{0}\|_{L^{2}_{x}}^{2} + \frac{C'}{\epsilon^{3/2}} \|V_{1}\|_{\mathcal{X}}^{2}\|V_{2}\|_{\mathcal{X}} .
\end{equation}
\end{remark}

\color{black}

\subsubsection{Estimates for \texorpdfstring{$k=1$}{Lg}}

From \eqref{etaLIN}, we deduce that $\eta_2$ solves 
\begin{equation*}
    \partial_{t} \eta_2 - \partial_{x}(\phi_{\epsilon}(v_{\epsilon})\partial_{x}\eta_2)   =\mathcal{L}_{\epsilon}(\eta_2) + S_\epsilon(\partial_x V_1).
\end{equation*}
\color{black}
Multiplying by $\eta_{2}$ and integrating in space and time, we get
\begin{equation} \label{k=1preESTIMATES}
    \begin{aligned}
        \frac{1}{2}\int_{\mathbb{R}} |\eta_{2}(t)|^{2}~dx &+ \int_{0}^{t} \int_{\mathbb{R}} \phi_{\epsilon}(v_{\epsilon})|\partial_{x}\eta_{2}|^{2}~dxd\tau - \frac{1}{2}\int_{\mathbb{R}} |\eta_{2}(0)|^{2}~dx \\[1ex]=& \int_{0}^{t}\int_{\mathbb{R}} \eta_{2} \frac{\partial_{x}^{2}H(\partial_{x}V_{1})}{v_{\epsilon}-1}~dxd\tau + \int_{0}^{t} \int_{\mathbb{R}} \eta_{2} \frac{\partial_{x}^{2}W_0}{v_{\epsilon}-1}~dxd\tau + \int_{0}^{t}\int_{\mathbb{R}} \mathcal{L}_\epsilon(\eta_2)\eta_2 \,\,dxd\tau \\
        &=: \sum_{n=1}^{3}I_{n}.
    \end{aligned}
\end{equation}
We now estimate each of $I_{1}, I_2 , I_{3}$. For $I_{1}$, we first integrate by parts to get 
\begin{equation*}
    I_{1} = - \int_{0}^{t} \int_{\mathbb{R}} \partial_{x}\eta_{2} \frac{\partial_{x}H(\partial_{x}V_{1})}{v_{\epsilon}-1} + \int_{0}^{t}\int_{\mathbb{R}} \eta_{2} \frac{v_{\epsilon}' \partial_{x}H(\partial_{x}V_{1})}{(v_{\epsilon}-1)^{2}} =: (1a) + (1b).
\end{equation*} Using \eqref{dxH}, 
\begin{equation*}
\begin{aligned}
    |(1a)| &\le C\int_{0}^{t} \int_{\mathbb{R}} |\partial_{x} \eta_{2}| \left| \frac{1}{v_{\epsilon}-1} \right| \left( \left| \frac{\partial_{x}V_{1}}{v_{\epsilon}-1} \right|^{2} + \frac{\epsilon}{(v_{\epsilon}-1)^{\gamma+2}}|\partial_{x}V_{1}| |\partial_{x}^{2}V_{1}| \right) .
\end{aligned}
\end{equation*}     
    The first term can be estimated as
\begin{equation*}
\begin{aligned}    
   \int_{0}^{t} \int_{\mathbb{R}} |\partial_{x} \eta_{2}| \left| \frac{1}{v_{\epsilon}-1} \right| \left| \frac{\partial_{x}V_{1}}{v_{\epsilon}-1} \right|^{2} &\le \left\| \frac{1}{\phi_{\epsilon}(v_{\epsilon}-1)^{2}}\right\|_{L^{\infty}_{t,x}} \|\eta_{1}\|_{L^{\infty}_{t,x}} \| \sqrt{\phi_{\epsilon}}\partial_{x}V_{1}\|_{L^{2}_{t,x}} \|\sqrt{\phi_{\epsilon}} \partial_{x} \eta_{2}\|_{L^{2}_{t,x}} \\[1ex] &\le \frac{C'}{\epsilon^{7/2}} \|V_{1}\|_{\mathcal{X}}^{2} \|V_{2}\|_{\mathcal{X}},
\end{aligned}
\end{equation*} where we have used \eqref{GNinf}. Note that the identity \eqref{dx2V} implies that \begin{equation} \label{dx2Vcomputation}
        \begin{aligned}
\left\|\sqrt{\phi_{\epsilon}}\frac{\partial_{x}^{2}V_{1}}{v_{\epsilon}-1} \right\|_{L^{2}_{t,x}} &\le \|\sqrt{\phi_{\epsilon}} \partial_{x}\eta_{1} \|_{L^{2}_{t,x}} + \|v_{\epsilon}'(v_{\epsilon}-1)^{-2} \sqrt{\phi_{\epsilon}} \partial_{x}V_{1}\|_{L^{2}_{t,x}}  \le \frac{C'}{\epsilon}\|V_{1}\|_{\mathcal{X}}.
        \end{aligned}
    \end{equation}Therefore, we have
\begin{equation*}
    \begin{aligned}        
        \int_{0}^{t}\int_{\mathbb{R}}        \frac{\epsilon}{(v_{\epsilon}-1)^{\gamma+1}}|\partial_{x}\eta_{2}| \left| \frac{\partial_{x}^{2}V_{1}}{v_{\epsilon}-1}\right|\left| \frac{\partial_{x}V_{1}}{v_{\epsilon}-1}\right| &\le 
        \left\|\frac{\epsilon}{(v_\epsilon-1)^{\gamma+1}\phi_\epsilon}\right\|_{L^\infty_{t,x}}\|\eta_1\|_{L^\infty_{t,x}} \left\|\sqrt{\phi_\epsilon}\partial_x\eta_2\right\|_{L^2_{t,x}}\left\|\sqrt{\phi_\epsilon}\frac{\partial_x^2 V_1}{v_\epsilon-1}\right\|_{L^2_{t,x}}\\
        &\le \frac{C'}{\epsilon^{7/2}} \|V_{1}\|_{\mathcal{X}}^{2}\|V_{2}\|_{\mathcal{X}}.
    \end{aligned} 
\end{equation*} Next note that using \eqref{dxH},
\begin{equation*}
    |(1b)| \leq \int_{0}^{t} \int_{\mathbb{R}} \frac{v_{\epsilon}'}{(v_{\epsilon}-1)^3} |\partial_{x}V_{2} | |\partial_{x}H(\partial_{x}V_{1})|  \le \int_{0}^{t} \int_{\mathbb{R}} | \partial_{x}V_{2} |\frac{v_{\epsilon}'}{(v_{\epsilon}-1)^3 }  \left( \left| \frac{\partial_{x}V_{1}}{v_{\epsilon}-1} \right|^{2} + \frac{\epsilon}{(v_{\epsilon}-1)^{\gamma+2}}|\partial_{x}V_{1}| |\partial_{x}^{2}V_{1}| \right).
\end{equation*} Then estimating in the same way as $(1a)$, we also find that $(1b) \le (C'/\epsilon^{7/2}) \|V_{1}\|_{\mathcal{X}}^{2}\|V_{2}\|_{\mathcal{X}}$ and so \begin{equation}
    |I_{1}| \le \frac{C'}{\epsilon^{7/2}} \|V_{1}\|_{\mathcal{X}}^{2}\|V_{2}\|_{\mathcal{X}}.
\end{equation}
Integrating by parts,
\begin{equation*}
    \begin{aligned}
        I_{2} = - \int_{0}^{t} \int_{\mathbb{R}} \frac{\partial_{x}W_0}{v_{\epsilon}-1} \partial_{x}\eta_{2} +  \int_{0}^{t}\int_{\mathbb{R}} \frac{v_{\epsilon}' \partial_{x}W_0}{(v_{\epsilon}-1)^{2}} \frac{\partial_{x}V_{2}}{v_{\epsilon}-1}.
    \end{aligned}
\end{equation*}
We first compute using Young's inequality that 
\begin{equation*}
    \left|\int_0^t\int_{\mathbb{R}}\frac{\partial_x W_0}{v_\epsilon-1}\partial_x\eta_2\right| \leq \frac{1}{8}\int_0^tD_1(\tau; V_2)\,d\tau + C\int_0^t\int_{\mathbb{R}}\frac{1}{\phi_\epsilon}\left(\frac{\partial_x W_0}{v_\epsilon-1}\right)^2.
\end{equation*}
The last integral can be estimated by
\begin{equation*}
    \int_0^t\int_{\mathbb{R}}\frac{1}{\phi_\epsilon}\left(\frac{\partial_x W_0}{v_\epsilon-1}\right)^2 \leq \int_{\mathbb{R}}(\partial_x W_0)^2\int_0^t\frac{1}{\phi_\epsilon(v_\epsilon-1)^2}\left(\mathbf{1}_{\xi<0}+\mathbf{1}_{\xi>0}\right)d\tau dx,
\end{equation*}
with $\xi:=x-s\tau$. The bound \eqref{asymptoticbound} then yields for the part $\{\xi<0\}$ the estimate
\begin{align*}
    \int_0^t\frac{1}{\phi_\epsilon(v_\epsilon-1)^2}\mathbf{1}_{\xi<0} \leq & \frac{C}{\epsilon}\int_0^t\left(B-\frac{A_1}{\epsilon}\xi\right)^{-1+1/\gamma}\mathbf{1}_{\xi<0}\\
    \leq & C\left[\left(B-\frac{A_1}{\epsilon}(x-st)\right)^{1/\gamma}-\left(B-\frac{A_1}{\epsilon}(x-\max(x,0))\right)^{1/\gamma}\right]\mathbf{1}_{x-st\leq 0}.
\end{align*}
Note that the bound obtained in the end is a positive nondecreasing function that goes to $+\infty$ as $t$ goes to $+\infty$, for every $x$. The inequality \eqref{doublebound} shows that this bound is optimal, i.e. that a similar lower bounds holds for this integral. Hence it seems complicated to bound the contribution of the integral $I_2$ uniformly in time. However, it is always possible to write a bound of the form 
\begin{equation*}
    \int_0^t\frac{1}{\phi_\epsilon(v_\epsilon-1)^2}\mathbf{1}_{\xi<0} \leq C\left(\frac{t}{\epsilon}\right)^{1/\gamma},
\end{equation*}
uniformly in $x$. For the part $\{\xi>0\}$, one has as before
\begin{align*}
    \int_\mathbb{R}(\partial_x W_0)^2\int_0^t\frac{1}{\phi_\epsilon(v_\epsilon-1)^2} \mathbf{1}_{\xi >0}\,d\tau dx \leq \frac{C}{\epsilon}\int_\mathbb{R}(\partial_x W_0)^2\int_0^{x/s}\mathbf{1}_{\xi>0}d\tau dx=\frac{C}{\epsilon}\int_{\mathbb{R}}(\partial_x W_0)^2 x \mathbf{1}_{x\geq 0}\,dx.
\end{align*}
For the second integral appearing in $I_2$, we use the same splitting between $\xi<0$ and $\xi >0$, together with \eqref{asymptoticbound} to obtain again
\begin{align*}
    \left| \int_{0}^{t}\int_{\mathbb{R}} \frac{v_{\epsilon}' \partial_{x}W_0}{(v_{\epsilon}-1)^{2}} \frac{\partial_{x}V_{2}}{v_{\epsilon}-1}\right| \leq \frac{1}{2\epsilon^2}\int_0^t D_0(\tau,V_2)d\tau + C\left(\frac{t}{\epsilon}\right)^{1/\gamma}\|\partial_x W_0\|^2_{L^2_x} + \frac{C}{\epsilon}\|\sqrt{x}\partial_x W_0\|_{L^2(\mathbb{R}_+)}^2.
\end{align*}

This leads to 
\begin{equation} \label{k=1ImprovedEstimate}
    |I_2| \leq \frac{1}{8}\int_0^tD_1(\tau;V_2)d\tau +\frac{1}{2\epsilon^2}\int_0^tD_0(\tau;V_2)d\tau +C\left(\frac{t}{\epsilon}\right)^{1/\gamma}\|\partial_x W_0\|^2_{L^2_x} + \frac{C}{\epsilon}\|\sqrt{x}\partial_x W_0\|_{L^2(\mathbb{R}_+)}^2.
\end{equation}
By comparison with \eqref{VpostESTIMATESimproved}, we see that multiplicating by the singular weight $1/(v_\epsilon-1)$ prevents the estimates from being uniform in time when $W_0\neq 0$.

\begin{remark}
    If we do not suppose that $\sqrt{x}\partial_x W_0 \in L^2(\mathbb{R}_+)$, but only that $W_0\in H^1(\mathbb{R})$, it is still possible to obtain a weaker estimate:
    \begin{equation*}
    \begin{aligned}
        |I_{2}| &\le \left\| \frac{1}{(v_{\epsilon}-1)\sqrt{\phi_{\epsilon}}}\right\|_{L^{\infty}_{t,x}} \|\sqrt{\phi_{\epsilon}} \partial_{x}\eta_{2}\|_{L^{2}_{t,x}} \|\partial_{x}W_0\|_{L^{2}_{t,x}} + \left\| \frac{v_{\epsilon}'}{(v_{\epsilon}-1)^{3}\sqrt{\phi_{\epsilon}}}\right\|_{L^{\infty}_{t,x}}\|\sqrt{\phi_{\epsilon} }\partial_{x}V_{2}\|_{L^{2}_{t,x}} \|\partial_{x}W_0\|_{L^{2}_{t,x}} \\[1ex] &\le \frac{Ct}{\epsilon} \|\partial_{x}W_{0}\|_{L^{2}_{x}}^{2} + \frac{C}{\epsilon^{2}} \int_{0}^{t}D_{0}(\tau;V_{2})~d\tau + \frac{1}{8} \int_{0}^{t} D_{1}(\tau;V_{2})~d\tau.
    \end{aligned}
\end{equation*}
\end{remark}
\color{black}
Finally, we use Equation \eqref{linearestimate1} of Lemma \ref{linearoperatorestimates} with $\alpha=1/8$ in order to bound $I_3$:
\begin{equation*}
    |I_3| = \left|\int_0^t\int_{\mathbb{R}}\mathcal{L}_\epsilon(\eta_2)\eta_2 \right| \leq \frac{1}{8} \int_0^t\int_{\mathbb{R}}\phi_\epsilon(\partial_x\eta_2)^2 + \frac{C}{\epsilon^2} \int_0^t\int_{\mathbb{R}}\phi_\epsilon(\partial_x V_2)^2
\end{equation*}
Collecting our estimates for $I_{1}-I_{3}$ and returning to \eqref{k=1preESTIMATES}, we find 
\begin{equation}
    \begin{aligned}
          E_{1}(t;V_{2}) + \frac{3}{2}\int_{0}^{t} D_{1}(\tau;V_{2})
~ d\tau  &\le E_{1}(0;V_{2}) + C\left(\frac{t}{\epsilon}\right)^{1/\gamma} \|\partial_{x}W_{0}\|_{L^{2}_{x}}^{2} +\frac{C}{\epsilon}\|\sqrt{x}\partial_x W_0\|^2_{L^2(\mathbb{R}_+)} \\[1ex] &~~~ + \frac{C'}{\epsilon^{7/2}} \|V_{1}\|_{\mathcal{X}}^{2}\|V_{2}\|_{\mathcal{X}} + \frac{C}{\epsilon^{2}}\int_{0}^{t}D_{0}(\tau;V_{2})~d\tau,
    \end{aligned}
\end{equation}
which yields, after multiplication by $\epsilon^2$,
\begin{equation}
\label{k=1postestimates}
    \begin{aligned}
         \epsilon^2 E_{1}(t;V_{2}) + \frac{3}{2}\epsilon^2\int_{0}^{t} D_{1}(\tau;V_{2})
~ d\tau  &\le \epsilon^2 E_{1}(0;V_{2}) + C\left(\frac{t}{\epsilon}\right)^{1/\gamma} \epsilon^2\|\partial_{x}W_{0}\|_{L^{2}_{x}}^{2} +C\epsilon\|\sqrt{x}\partial_x W_0\|^2_{L^2(\mathbb{R}_+)} \\[1ex] &~~~ + \frac{C'}{\epsilon^{3/2}} \|V_{1}\|_{\mathcal{X}}^{2}\|V_{2}\|_{\mathcal{X}} + C\int_{0}^{t}D_{0}(\tau;V_{2})~d\tau.
    \end{aligned}
\end{equation}
\begin{remark}
\label{k=1postestimatesIMPROVED}
If we do not suppose that $\sqrt{x}\partial_x W_0$ belong to $L^2(\mathbb{R}_+)$, we still have
\begin{equation*} 
    \begin{aligned}
           \epsilon^{2} E_{1}(t;V_{2})  +  \frac{3}{2}\epsilon^{2} \int_{0}^{t} D_{1}(\tau;V_{2})~d\tau \le & \epsilon^{2}E_{1}(0;V_{2}) + \frac{C'}{\epsilon^{3/2}} \|V_{1}\|_{\mathcal{X}}^{2}\|V_{2}\|_{\mathcal{X}}  + Ct\epsilon \|\partial_{x}W_{0}\|_{L^{2}_{t,x}}^{2} .
    \end{aligned}
\end{equation*}
\end{remark}
\color{black}
\subsubsection{Estimates for \texorpdfstring{$k=2$}{Lg}}

Equation \eqref{dxetaLIN} gives that $\partial_x \eta_2$ solves
\begin{equation*}
    \partial_{t}(\partial_{x}\eta_2) - \partial_{x}(\phi_{\epsilon}(v_{\epsilon})\partial_{x}^{2}\eta_2)  = \partial_{x}S_\epsilon(\partial_x V_1) +\mathcal{L}_{\epsilon}(\partial_{x}\eta_2) + \mathcal{C}_\epsilon (\eta_2). 
\end{equation*}
\color{black}
Multiplying by $\partial_{x}\eta_{2}$ and integrating in space and time leads to
\begin{equation} \label{dx2etaPREESTIMATES}
    \begin{aligned}
        \frac{1}{2}\int_{\mathbb{R}} |\partial_{x}\eta_{2}(t)|^{2}~dx &+ \int_{0}^{t} \int_{\mathbb{R}} \phi_{\epsilon}(v_{\epsilon}) |\partial_{x}^{2}\eta_{2}|^{2}~dxd\tau - \frac{1}{2}\int_{\mathbb{R}} |\partial_{x}\eta_{2}(0)|^{2}~dx \\[1ex] =&  \int_{0}^{t}\int_{\mathbb{R}}  \frac{\partial_{x}^{2}H(\partial_x V_1)}{v_{\epsilon}-1} \partial_{x}^{2}\eta_{2} + \int_{0}^{t}\int_{\mathbb{R}} \frac{\partial_{x}^{2}W_0}{v_{\epsilon}-1} \partial_{x}^{2}\eta_{2} + \int_{0}^{t}\mathcal{L}_\epsilon(\partial_x\eta_2)\partial_x \eta_2   + \int_{0}^{t}\int_{\mathbb{R}} \partial_{x}\eta_{2} ~ \mathcal{C}_\epsilon (\eta_{2})  \\[1ex] =&: \sum_{n=1}^{4}J_{n}. 
    \end{aligned}
\end{equation}
\color{black} We now estimate $J_{1}, J_2, J_{3}, J_4$. Using \eqref{dx2H}, \begin{equation} \label{J1}
    \begin{aligned}
        |J_{1}| &\le C \int_{0}^{t}\int_{\mathbb{R}} \frac{|\partial_{x}^{2}\eta_{2}|}{v_{\epsilon}-1} \left( \frac{(v_{\epsilon}-1)^{\gamma-2}}{\epsilon} |\partial_{x}V_{1}|^{2} + \frac{\epsilon}{(v_{\epsilon}-1)^{\gamma+2}}|\partial_{x}V_{1}| |\partial_{x}^{3}V_{1}| + \frac{\epsilon}{(v_{\epsilon}-1)^{\gamma+2}}|\partial_{x}^{2}V_{1}|^{2} \right) \\[1ex] &\le \frac{C}{\epsilon^{2}} \|\eta_{1}\|_{L^{\infty}_{t,x}}\|\sqrt{\phi_{\epsilon}} \partial_{x}^{2}\eta_{2}\|_{L^{2}_{t,x}} \| \sqrt{\phi_{\epsilon}} \partial_{x}V_{1}\|_{L^{2}_{t,x}} + C\epsilon \int_{0}^{t} \int_{\mathbb{R}} \frac{|\partial_{x}V_{1}|}{v_{\epsilon}-1} \frac{|\partial_{x}^{2}\eta_{2}|}{(v_{\epsilon}-1)^{\gamma+2}}|\partial_{x}^{3}V_{1}| \\[1ex] &~~~+ C\epsilon\int_{0}^{t} \int_{\mathbb{R}} \frac{|\partial_{x}^{2}\eta_{2}|}{v_{\epsilon}-1} \frac{ |\partial_{x}^{2}V_{1}|^{2}}{(v_{\epsilon}-1)^{\gamma+2}}.
    \end{aligned}
\end{equation}
We have that
\begin{equation*}
    \begin{aligned}
        \epsilon\int_{0}^{t} \int_{\mathbb{R}} \frac{|\partial_{x}V_{1}|}{v_{\epsilon}-1} \frac{|\partial_{x}^{2}\eta_{2}|}{(v_{\epsilon}-1)^{\gamma+2}}|\partial_{x}^{3}V_{1}|&\le C\|\eta_{1}\|_{L^{\infty}_{t,x}} \int_{0}^{t} \int_{\mathbb{R}} \frac{|\partial_{x}^{3}V_{1}|}{v_{\epsilon}-1} \phi_{\epsilon} |\partial_{x}^{2}\eta_{2}|  \\[1ex] &\le C \|\eta_{1}\|_{L^{\infty}_{t,x}} \left\|\sqrt{\phi_{\epsilon}}\partial_{x}^{2}\eta_{2}\right\|_{L^{2}_{t,x}} \left\|\sqrt{\phi_{\epsilon}}\frac{\partial_{x}^{3}V_{1}}{v_{\epsilon}-1}\right\|_{L^{2}_{t,x}}.
    \end{aligned}
\end{equation*} 
We compute that
\begin{equation*}
    \frac{\partial_{x}^{3}V_{1}}{v_{\epsilon}-1} = \partial_{x}^{2}\eta_{1} + \frac{2( v_{\epsilon}')}{v_{\epsilon}-1} \partial_{x}\eta_{1} + \frac{\partial_x V_1}{v_\epsilon-1}\left[\frac{(v_\epsilon')^2}{(v_\epsilon-1)^2}+\partial_x\left(\frac{v_\epsilon'}{v_\epsilon-1}\right)\right],
\end{equation*}
hence 
\begin{equation}
    \label{dx3v1}
    \left\|\sqrt{\phi_{\epsilon}}\frac{\partial_{x}^{3}V_{1}}{v_{\epsilon}-1}\right\|_{L^{2}_{t,x}} \leq \frac{C'}{\epsilon^2}\|V_1\|_{\mathcal{X}},
\end{equation}
which yields with the previous computations that
\begin{equation*}
     \epsilon\int_{0}^{t} \int_{\mathbb{R}} \frac{|\partial_{x}V_{1}|}{v_{\epsilon}-1} \frac{|\partial_{x}^{2}\eta_{2}|}{(v_{\epsilon}-1)^{\gamma+2}}|\partial_{x}^{3}V_{1}| \leq \frac{C'}{\epsilon^{11/2}}\|V_1\|_{\mathcal{X}}\|V_2\|_{\mathcal{X}}^2.
\end{equation*}
\color{black}
To estimate the final integral appearing in \eqref{J1}, let us first note that since \begin{equation*}
    \partial_{x} ( \sqrt{\phi_{\epsilon}}\partial_{x}\eta_{1}) =  \frac{v_{\epsilon}'\phi_{\epsilon}'}{2\sqrt{\phi_{\epsilon}}} \partial_{x}\eta_{1} + \sqrt{\phi_{\epsilon}} \partial_{x}^{2}\eta_{1},
\end{equation*} we have that
\begin{equation} \label{J1a}
    \begin{aligned}
        \|\partial_{x}(\sqrt{\phi_{\epsilon}}\partial_{x}\eta_{1})\|_{L^{2}_{t,x}} \le \frac{C}{\epsilon} \|\sqrt{\phi_{\epsilon}} \partial_{x}\eta_{1}\|_{L^{2}_{t,x}} + \|\sqrt{\phi_{\epsilon}}\partial_{x}^{2} \eta_{1}\|_{L^{2}_{t,x}} \le \frac{C'}{\epsilon^{2}} \|V_{1}\|_{\mathcal{X}}.
    \end{aligned}
\end{equation} 
Therefore,
\begin{equation} \label{dxeta-L2-Linf}
    \begin{aligned}
\|\sqrt{\phi_{\epsilon}}\partial_{x}\eta_{1}\|_{L^{2}_{t}L^{\infty}_{x}} \le \|\sqrt{\phi_{\epsilon}}\partial_{x}\eta_{1}\|_{L^{2}_{t,x}}^{1/2}\|\partial_{x}(\sqrt{\phi_{\epsilon}}\partial_{x}\eta_{1})\|_{L^{2}_{t,x}}^{1/2} \le \frac{C'}{\epsilon^{3/2}} \|V_{1}\|_{\mathcal{X}}
    \end{aligned}
\end{equation}
Using this estimate and \eqref{dx2V}, we have that
\begin{equation*}
    \begin{aligned}
\epsilon\int_{0}^{t} \int_{\mathbb{R}} \frac{|\partial_{x}^{2}\eta_{2}|}{v_{\epsilon}-1} \frac{ |\partial_{x}^{2}V_{1}|^{2}}{(v_{\epsilon}-1)^{\gamma+2}} &\le  C\int_{0}^{t} \int_{\mathbb{R}} \phi_{\epsilon}|\partial_{x}^{2}\eta_{2}| \frac{ |\partial_{x}^{2}V_{1}|^{2}}{(v_{\epsilon}-1)^{2}} \le C \int_{0}^{t} \int_{\mathbb{R}}\phi_{\epsilon}|\partial_{x}^{2}\eta_{2}| \left( |\partial_{x}\eta_{1}|^{2} + \frac{(v_{\epsilon}')^{2} |\partial_{x}V_{1}|^{2}}{(v_{\epsilon}-1)^{4}} \right) \\[1ex] &\le C\|\sqrt{\phi_{\epsilon}}\partial_{x}^{2}\eta_{2}\|_{L^{2}_{t,x}} \left( \|\partial_{x}\eta_{1}\|_{L^{\infty}_{t}L^{2}_{x}} \|\sqrt{\phi_{\epsilon}}\partial_{x}\eta_{1}\|_{L^{2}_{t}L^{\infty}_{x}} + \frac{C}{\epsilon^{2}} \|\eta_{1}\|_{L^{\infty}_{t,x}}\|\sqrt{\phi_{\epsilon}}\partial_{x}V_{1}\|_{L^{2}_{t,x}} \right) \\[1ex] &\le \frac{C'}{\epsilon^{11/2}} \|V_{1}\|_{\mathcal{X}}^{2}\|V_{2}\|_{\mathcal{X}}.
    \end{aligned}
\end{equation*} In summary, we find $J_{1} \le (C'/\epsilon^{11/2})\|V_{2}\|_{\mathcal{X}} \|V_{1}\|_{\mathcal{X}}^{2}$. \begin{remark}
    From the estimate for $J_{1}$ we can deduce that \begin{align} \label{L4estimateV}
&\|\sqrt{\phi_{\epsilon}}|\partial_{x}\eta|^{2}\|_{L^{2}_{t,x}}  \le \frac{C'}{\epsilon^{5/2}} \|V\|_{\mathcal{X}}, \\[1ex] &\|\sqrt{\phi_{\epsilon}}|\partial_{x}V|^{2}\|_{L^{2}_{t,x}}  \le \frac{C'}{\epsilon} \|V\|_{\mathcal{X}}, \label{L4estimateETA} \\
&\|\sqrt{\phi_{\epsilon}}\partial_{x}V\|_{L^{2}_{t}L^{\infty}_{x}}  \le \frac{C'}{\sqrt{\epsilon}} \|V\|_{\mathcal{X}}, \label{dxV-L2-Linf}
    \end{align} which will be useful for later estimates.
\end{remark}
For $J_2$, we proceed as in the $k=1$ case and obtain an estimate similar to \eqref{k=1ImprovedEstimate}:
\begin{equation*}
    |J_2| = \left|\int_{0}^{t}\int_{\mathbb{R}} \frac{\partial_{x}^{2}W_0}{v_{\epsilon}-1} \partial_{x}^{2}\eta_{2}\right|\leq \frac{1}{6}\int_0^tD_2(\tau;V_2)d\tau+C\left(\frac{t}{\epsilon}\right)^{1/\gamma}\|\partial_x^2W_0\|_{L^2_x}^2+\frac{C}{\epsilon}\|\sqrt{x}\partial_x^2W_0\|_{L^2(\mathbb{R}_+)}^2.
    \end{equation*}
\begin{remark}
    If $\sqrt{x}\partial_x^2W_0\notin L^2(\mathbb{R}_+)$, we still have using the Holder and Young inequalities that
\begin{equation*}
    |J_{2}|   \le \frac{C}{\sqrt{\epsilon}} \|\partial_{x}^{2}W_0\|_{L^{2}_{t,x}} \|\sqrt{\phi_{\epsilon}}\partial_{x}^{2}\eta_{2}\|_{L^{2}_{t,x}} \le \frac{Ct}{\epsilon} \|\partial_{x}^{2}W_{0}\|_{L^{2}_{x}}^{2} + \frac{1}{6} \int_{0}^{t} D_{2}(\tau;V_{2})~d\tau.
\end{equation*} 
\end{remark}
\color{black}

We now use Equation \eqref{linearestimate1} of Lemma \ref{linearoperatorestimates} with $\alpha=1/6$ to obtain that
\begin{equation*}
    |J_{3}| = \left|\int_{0}^{t}\mathcal{L}_\epsilon(\partial_x\eta_2)\partial_x \eta_2\right| \le \frac{1}{6}\int_0^t\int_{\mathbb{R}}\phi_\epsilon(\partial_x^2\eta_2)^2+ \frac{C}{\epsilon^2}\int_0^t\int_{\mathbb{R}}\phi_\epsilon (\partial_x \eta_2)^2.
\end{equation*} 

\color{black}

It now remains to estimate the term involving $\mathcal{C}_\epsilon$, i.e. $J_4$. We use this time Equation \eqref{linearestimate2} of Lemma \ref{linearoperatorestimates} with $\alpha= 1/6$ to get 
\begin{equation*}
    |J_4| = \left|\int_{0}^{t}\int_{\mathbb{R}} \partial_{x}\eta_{2} ~ \mathcal{C}_\epsilon (\eta_{2})\right| \leq \frac{1}{6} \int_0^t\int_{\mathbb{R}}\phi_\epsilon(\partial_x^2\eta)^2 + \frac{C}{\epsilon^2} \int_0^t\int_{\mathbb{R}}\phi_\epsilon\left((\partial_x\eta_2)^2+\frac{(\partial_x V_2^2)}{\epsilon^2}\right).
\end{equation*}

Returning to \eqref{dx2etaPREESTIMATES} with our estimates for $K_{1}-K_{7}$ and $J_{1}-J_{3}$, we have
\begin{equation*}
    \begin{aligned}
       E_{2}(t; V_{2}) + \int_{0}^{t} D_{2}(\tau;V_{2})~d\tau &\le 
 E_{2}(0; V_{2}) + C\left(\frac{t}{\epsilon}\right)^{1/\gamma} \|\partial_{x}^{2}W_{0}\|_{L^{2}_{x}}^{2} +\frac{C}{\epsilon}\|\sqrt{x}\partial_x^2 W_0\|_{L^2(\mathbb{R}_+)}^2+ \frac{C'}{\epsilon^{11/2}} \|V_{1}\|_{\mathcal{X}}^{2}\|V_{2}\|_{\mathcal{X}} \\[1ex] &~~~ +  \frac{C}{\epsilon^{2}} \int_{0}^{t} D_{1}(\tau;V_{2})~d\tau + \frac{C}{\epsilon^{4}} \int_{0}^{t}D_{0}(\tau;V_{2})~d\tau.
    \end{aligned}
\end{equation*}
We may assume that each $C>0$ appearing on the right hand-side is bounded by $B_0 = B_0(\gamma, v_{+},s) > 0$. Thus letting  $k_0 := 1/ (2B_0)$ and multiplying by $k_0\epsilon^{4}$ results in 
\begin{equation} \label{k=2presupremum}
    \begin{aligned}
       k_0 \epsilon^{4} E_{2}(t; V_{2}) + k_0\epsilon^{4} \int_{0}^{t} D_{2}(\tau;V_{2})~d\tau &\le 
 k_0 \epsilon^{4} E_{2}(0; V_{2}) + \left(\frac{t}{\epsilon}\right)^{1/\gamma}\epsilon^{4} \|\partial_{x}^{2}W_{0}\|_{L^{2}_{x}}^{2} + \frac{C'}{\epsilon^{3/2}}\|V_{1}\|_{\mathcal{X}}^{2} \|V_{2}\|_{\mathcal{X}} \\[1ex] &~~~+    \frac{\epsilon^{2}}{2} \int_{0}^{t} D_{1}(\tau;V_{2})~d\tau + \frac{1}{2} \int_{0}^{t}D_{0}(\tau;V_{2})~d\tau + \epsilon^3\|\sqrt{x}\partial_x^2 W_0\|_{L^2(\mathbb{R}_+)}^2.
    \end{aligned}
\end{equation} 
Adding to what we found for $k=1$, i.e. \eqref{k=1postestimates}, we find after simplifying the terms containing $D_1$ that
 \begin{equation*} 
    \begin{aligned}
         & \epsilon^{2} E_{1}(t; V_{2}) + \epsilon^{2} \int_{0}^{t} D_{1}(\tau;V_{2})~d\tau + k_0 \epsilon^{4} E_{2}(t; V_{2}) + k_0 \epsilon^{4} \int_{0}^{t} D_{2}(\tau;V_{2})~d\tau \\
         \le &  \epsilon^{2} E_{1}(0; V_{2}) + k_0\epsilon^{4} E_{2}(0; V_{2})+  \left(\frac{t}{\epsilon}\right)^{1/\gamma}\left(C\epsilon^2\|\partial_x W_0\|^2_{L^2_x}+\epsilon^{4} \|\partial_{x}^{2}W_{0}\|_{L^{2}_{x}}^{2}\right) + \frac{C'}{\epsilon^{3/2}}\|V_{1}\|_{\mathcal{X}}^{2} \|V_{2}\|_{\mathcal{X}} \\
         & + C\epsilon\|\sqrt{x}\partial_x W_0\|^2_{L^2(\mathbb{R}_+)}+ \epsilon^3\|\sqrt{x}\partial_x^2 W_0\|^2_{L^2(\mathbb{R}_+)} + \left(C+\frac{1}{2}\right)\int_0^tD_0(\tau;V_2)~d\tau.
    \end{aligned}
\end{equation*} 
Again, we may assume that each $C+1/2, C>0$ that appear on the right-hand side is bounded by $B_1=B_1(\gamma,v_+,s)>0$. Without loss of generality, we may assume that $B_1>1$. Thus letting $k_1:=1/(2B_1)$ and multiplying by $k_1$ yields 
 \begin{equation*} 
    \begin{aligned}
         & k_1 \epsilon^{2} E_{1}(t; V_{2}) + k_1\epsilon^{2} \int_{0}^{t} D_{1}(\tau;V_{2})~d\tau + k_1 k_0 \epsilon^{4} E_{2}(t; V_{2}) + k_1k_0\epsilon^{4} \int_{0}^{t} D_{2}(\tau;V_{2})~d\tau \\
         \le &  k_1\epsilon^{2} E_{1}(0; V_{2}) + k_1 k_0 \epsilon^{4} E_{2}(0; V_{2})+  \left(\frac{t}{\epsilon}\right)^{1/\gamma}\left(\epsilon^2\|\partial_x W_0\|^2_{L^2_x}+\epsilon^{4} \|\partial_{x}^{2}W_{0}\|_{L^{2}_{x}}^{2}\right) + \frac{C'}{\epsilon^{3/2}}\|V_{1}\|_{\mathcal{X}}^{2} \|V_{2}\|_{\mathcal{X}} \\
         & + \epsilon\|\sqrt{x}\partial_x W_0\|^2_{L^2(\mathbb{R}_+)}+ \epsilon^3\|\sqrt{x}\partial_x^2 W_0\|^2_{L^2(\mathbb{R}_+)} + \frac{1}{2} \int_0^tD_0(\tau;V_2)~d\tau.
    \end{aligned}
\end{equation*} 
Adding this inequality to the one that we obtained for $k=0$, i.e. \eqref{VpostESTIMATESimproved}, and simplifying the terms depending on $D_0$, we get 
 \begin{equation*} 
    \begin{aligned}
         & E_{0}(t; V_{2}) + \int_{0}^{t} D_{0}(\tau;V_{2})~d\tau  + k_1 \epsilon^{2} \left(E_{1}(t; V_{2}) +\int_{0}^{t} D_{1}(\tau;V_{2})~d\tau\right) + k_1 k_0 \epsilon^{4}\left( E_{2}(t; V_{2}) +  \int_{0}^{t} D_{2}(\tau;V_{2})~d\tau \right)\\
         & \le  E_{0}(0; V_{2})  + k_1\epsilon^{2} E_{1}(0; V_{2}) + k_1 k_0 \epsilon^{4} E_{2}(0; V_{2})+  \left(\frac{t}{\epsilon}\right)^{1/\gamma}\left(\epsilon^2\|\partial_x W_0\|^2_{L^2_x}+\epsilon^{4} \|\partial_{x}^{2}W_{0}\|_{L^{2}_{x}}^{2}\right) + \frac{C'}{\epsilon^{3/2}}\|V_{1}\|_{\mathcal{X}}^{2} \|V_{2}\|_{\mathcal{X}} \\
         & + \epsilon\|\sqrt{x}\partial_x W_0\|^2_{L^2(\mathbb{R}_+)}+ \epsilon^4\|\sqrt{x}\partial_x^2 W_0\|^2_{L^2(\mathbb{R}_+)}+ C\|W_0\|_{L^2_x}^2 + \frac{C}{\epsilon}\|\sqrt{x}W_0\|^2_{L^2(\mathbb{R}_+)}.
    \end{aligned}
\end{equation*} 
Again, we may assume that each $C>0$ in the right-hand side is bounded by $B_2=B_2(s,\gamma,v_+)>1$. We set $k_2:=1/B_2$ and multiply by $k_2$. In order to simplify computations, we also define 
\begin{equation}
    c_0:= k_2,\qquad c_1:= k_2 k_1, \qquad c_2:= k_2 k_1 k_0.
\end{equation}
We then obtain
\begin{equation*} 
    \begin{aligned}
          \sum_{k=0}^2 c_k \epsilon^{2k}\left(E_{k}(t; V_{2}) + \int_{0}^{t} D_{k}(\tau;V_{2})~d\tau \right) \leq &\sum_{k=0}^2 \left(c_k \epsilon^{2k} E_{k}(0; V_{2})+\epsilon^{2k-1}\|\sqrt{x}\partial_x^k W_0\|_{L^2(\mathbb{R}_+)}^2\right) + \|W_0\|_{L^2_x}^2\\
         & + \left(\frac{t}{\epsilon}\right)^{1/\gamma}\left(\epsilon^2\|\partial_x W_0\|^2_{L^2_x}+\epsilon^{4} \|\partial_{x}^{2}W_{0}\|_{L^{2}_{x}}^{2}\right)  + \frac{C'}{\epsilon^{3/2}}\|V_{1}\|_{\mathcal{X}}^{2} \|V_{2}\|_{\mathcal{X}}.
    \end{aligned}
\end{equation*} 
We use the inequality $t^{1/\gamma}\leq T^{1/\gamma}$ in the right-hand side, and then take the supremum in time in the left-hand side to obtain
\begin{equation} \label{k=2postestimates}
    \begin{aligned}
        \sup_{t \in [0,T]} &  \left( \sum_{k=0}^{2}  c_{k}\epsilon^{2k} \left[ E_{k}(t;V_{2}) + \int_{0}^{t}D_{k}(\tau;V_{2})~d\tau \right] \right) \le \sum_{k=0}^2\left[c_k \epsilon^{2k} E_k(0,V_2)+\epsilon^{2k-1}\|\sqrt{x}\partial_x^k W_0\|_{L^2(\mathbb{R}_+)}^2\right] \\[1ex] &+ \frac{C'}{\epsilon^{3/2}} \|V_{1}\|_{\mathcal{X}}^{2}\|V_{2}\|_{\mathcal{X}} + \|W_{0}\|_{L^{2}_{x}}^{2} + \epsilon^2\left(\frac{T}{\epsilon}\right)^{1/\gamma}\left( \left\| \partial_{x}W_{0}\right\|_{L^{2}_{t,x}}^{2}+\epsilon^2\|\partial_x^2W_0\|_{L^2_x}^2\right).
    \end{aligned}
\end{equation} 
Note that, when $W_0=0$, one can take $T=+\infty$ and obtain a global bound. Using the constants $c_0, c_1, c_2$ in the definition of the norm $\| \cdot \|_{\mathcal{X}}$,  \eqref{k=2postestimates} can be recast as
\begin{equation*}
    \begin{aligned}
    \|V_{2}\|_{\mathcal{X}}^{2} \le & \sum_{k=0}^2\left[c_k \epsilon^{2k} E_k(0,V_2)+\epsilon^{2k-1}\|\sqrt{x}\partial_x^k W_0\|_{L^2(\mathbb{R}_+)}^2\right] + \|W_{0}\|_{L^{2}_{x}}^{2}   \\
    &+ \epsilon^2\left(\frac{T}{\epsilon}\right)^{1/\gamma}\left( \left\| \partial_{x}W_{0}\right\|_{L^{2}_{t,x}}^{2}+\epsilon^2\|\partial_x^2W_0\|_{L^2_x}^2\right)+\frac{C'}{\epsilon^{3/2}}\|V_1\|_{\mathcal{X}}^2\|V_2\|_{\mathcal{X}} \\
    \leq & C' \delta^{2} \epsilon^{3/2} \|V_{2}\|_{\mathcal{X}}+  \delta_{0}^{2}\epsilon^{3} ,
    \end{aligned}
\end{equation*} where we have used our assumption on the initial data \eqref{IDassumptionSTABILITY}. Taking $\delta < 1$ to be such that $\delta < 1/(\sqrt{2}C')$, defining $\delta_{0} := \delta/2$ and using Young's inequality gives us \begin{equation}
    \|V_{2}\|_{\mathcal{X}}^{2} \le \delta^{2} \epsilon^{3},
\end{equation} and so $V_{2} \in B_{\delta}$ as required. This completes the first part of Proposition \ref{PROPstabilitycontraction}.

\begin{remark} \label{REMARK_improvedWP}
    When we do not suppose that $\sqrt{x}\partial_x^k W_0\in L^2(\mathbb{R}_+)$ for $k=0,1,2$, we still obtain that
\begin{equation*}
    \begin{aligned}
       \|V_2\|_{\mathcal{X}}^2 \leq \sum_{k=0}^2\left[c_k\epsilon^{2k}E_k(0;V_2)+T\epsilon^{2k-1}\|\partial_x^k W_0\|_{L^2_x}^2\right]+ C'\delta^2 \epsilon^{3/2} \|V_2\|_{\mathcal{X}}.
    \end{aligned}
\end{equation*} 
Hence the proof of the proposition still works after suitable modification of the assumption on the initial data. In this case, one has to impose the condition
\begin{equation*}
    \sum_{k=0}^2\left[c_k\epsilon^{2k}E_k(0;V_2)+T\epsilon^{2k-1}\|\partial_x^k W_0\|_{L^2_x}^2\right] \leq \delta_0^2\epsilon^3.
\end{equation*}
\end{remark}
\color{black}
\subsection{The map \texorpdfstring{$\mathcal{A}^{\epsilon}$}{Lg} is a contraction}
In this subsection we aim to show that the map $\mathcal{A}^{\epsilon}$ is a contraction. We consider two elements $V_{1}, V_{1}' \in B_{\delta}$ and define $V_{2} := \mathcal{A}^{\epsilon}(V_{1})$ and $V_{2}' := \mathcal{A}^{\epsilon}(V_{2}')$. Our goal is to show that there exists $\delta < 1$ with $\|V_{2} - V_{2}'\|_{\mathcal{X}} \le \delta \|V_{1} - V_{1}'\|_{\mathcal{X}}$. Defining $\tilde{V}_{1} := V_{1} - V_{1}'$ and $\tilde{V}_{2} := V_{2} - V_{1}'$, the following equations are satisfied:
\begin{equation}
        \left\{
    \begin{aligned} 
        &\partial_t \tilde{V}_{2} - \partial_{x}(\phi_{\epsilon}(v_{\epsilon}) \partial_{x}\tilde{V}_{2}) = \partial_{x} \left( H(\partial_{x}V_{1}) - H(\partial_{x}V_{1}') \right),\\
        &\partial_t \tilde{\eta}_{2} -\partial_x (\phi_{\epsilon}(v_{\epsilon}) \partial_{x}\tilde{\eta}_{2})  = \frac{\partial_{x}^{2}\left( H(\partial_{x}V_{1}) - H(\partial_{x}V_{1}') \right)}{v_{\epsilon}-1} + \mathcal{L}_{\epsilon}(\tilde{\eta}_{2}),  \\
        &\partial_{t} \partial_{x}\tilde{\eta}_{2} - \partial_{x}(\phi_{\epsilon}(v_{\epsilon})\partial_{x}^{2}\tilde{\eta}_{2}) = \partial_{x}\left(\frac{\partial_{x}^{2}\left( H(\partial_{x}V_{1}) - H(\partial_{x}V_{1}') \right)}{v_{\epsilon}-1} \right) + \mathcal{L}_{\epsilon}(\partial_{x}\tilde{\eta}_{2})  + \mathcal{C}_\epsilon (\tilde{\eta}_{2}). 
    \end{aligned}
    \right.
\end{equation}
We see that $\tilde{V}_2$, $\tilde{\eta_2}$ and $\partial_x\tilde{\eta}_2$ solve the same equations than $V_2,\eta_2,\partial_x\eta_2$ respectively, up to two differences:
\begin{itemize}
    \item The constant term $W_0$ is equal to zero here, as well as the initial data,
    \item The nonlinear terms (depending on $H$) are different.
\end{itemize}
With this in mind, multiplying the equations by $\tilde{V}_{2}, \tilde{\eta}_{2}, \partial_{x}\tilde{\eta}_{2}$ respectively and using again Lemma \ref{linearoperatorestimates} (taking advantage of the linearity of $\mathcal{L}_\epsilon$ and $\mathcal{C}_\epsilon$), we see that the computations of the previous section can be repeated. We get a similar estimate after suitable modification of the nonlinear terms. Since here $W_0$ and the initial data are equal to zero, we are left with
\begin{equation} \label{V2contractionPREestimates}
    \begin{aligned}
    \|\tilde{V}_{2}\|_{\mathcal{X}}^{2} \le& C\left|\int_{0}^{t} \int_{\mathbb{R}} \tilde{V}_{2} ~\partial_{x} \left( H(\partial_{x}V_{1}) - H(\partial_{x}V_{1}') \right)\right| + C\epsilon^{2}\left|\int_{0}^{t} \int_{\mathbb{R}} \tilde{\eta}_{2}~ \left( \frac{\partial_{x}^{2}\left( H(\partial_{x}V_{1}) - H(\partial_{x}V_{1}') \right)}{v_{\epsilon}-1}\right)\right| \\[1ex] &+ C\epsilon^{4}\left|\int_{0}^{t}\int_{\mathbb{R}} \partial_{x} \tilde{\eta}_{2} ~\partial_{x} \left( \frac{\partial_{x}^{2}(H(\partial_{x}V_{1}) - H(\partial_{x}V_{1}')}{v_{\epsilon}-1} \right)\right| =: C\sum_{k=1}^{3}\epsilon^{2k-2}|L_{k}|.
    \end{aligned}
\end{equation}
 To estimate each $L_{k}$ we will need the following result.
 \begin{lemma}
     If $f_{1}, f_{2} \in \mathcal{X}$   are such that $\|\frac{f_{1}}{v_\epsilon-1}\|_\infty + \|\frac{f_{2}}{v_\epsilon-1}\|_\infty \leq \delta$ for some $\delta < 1$, then there exists a constant $C>0$ independent of $\epsilon$ such that
     \begin{align} \label{Hdiff}
         &|H(f_{1}) - H(f_{2})| \le \frac{C \phi_{\epsilon}(v_{\epsilon})}{v_{\epsilon}-1} |f_{1}-f_{2}|( |f_{1}| + |f_{2}|), \\[1ex] \label{dxHdiff}
         &|\partial_{x}H(f_{1}) - \partial_{x}H(f_{2})| \le \frac{C\phi_{\epsilon}(v_{\epsilon})}{v_{\epsilon}-1} \left( \frac{v_{\epsilon}'}{v_{\epsilon}-1} |f_{1}-f_{2}| (|f_{1}|+|f_{2}|) + |\partial_{x}(f_{1}-f_{2})| |f_{1}| + |\partial_{x}f_{2}| |f_{1}-f_{2}| \right), \\[1ex]
         &|\partial_{x}^{2}H(f_{1}) - \partial_{x}^{2}H(f_{2})| \le   \frac{C(v_{\epsilon}-1)^{\gamma-2}}{\epsilon} |f_{1}-f_{2}|(|f_{1}|+f_{2}|)  \label{dx2Hdiff}  \\[1ex] \notag & \left. \hphantom{|\partial_{x}^{2}H(f_{1}) - \partial_{x}^{2}H(f_{2})| \le}+ \frac{C}{(v_{\epsilon}-1)^{2}}\left(|\partial_{x}(f_{1}-f_{2})||f_{2}| + |f_{1}-f_{2}||\partial_{x}f_{1}|\right) \right.  \\[1ex]&\left. \hphantom{|\partial_{x}^{2}H(f_{1}) - \partial_{x}^{2}H(f_{2})| \le} + \frac{C\phi_{\epsilon}(v_{\epsilon})}{v_{\epsilon}-1}|\partial_{x}(f_{1}-f_{2})|(|\partial_{x}f_{1}|+ |\partial_{x}f_{2}|)  \right. \notag \\[1ex] & \hphantom{|\partial_{x}^{2}H(f_{1}) - \partial_{x}^{2}H(f_{2})| \le~} +  \frac{C\phi_{\epsilon}(v_{\epsilon})}{v_{\epsilon}-1}\left(|f_{1}-f_{2}| |\partial_{x}^{2}f_{1}| + |\partial_{x}^{2}(f_{1}-f_{2})||f_{2}| + \frac{(\partial_{x}f_{2})^{2}}{v_{\epsilon}-1}|f_{1}-f_{2}| \right). \notag
         \end{align}
 \end{lemma}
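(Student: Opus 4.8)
The plan is to follow the blueprint of Lemma~\ref{Hestimate}. I would start from the explicit formulas for $H(f)$, $\partial_x H(f)$ and $\partial_x^2 H(f)$ already written down in its proof, write each difference $\partial_x^j H(f_1)-\partial_x^j H(f_2)$, $j=0,1,2$, as a finite sum of terms, each of which is a product of (an $x$-derivative of $v_\epsilon$ or of $f_1,f_2$) with (a ``Taylor remainder'' of some derivative $\psi_\epsilon^{(k)}$), and then split every such product as
\[
  a_1 b_1 - a_2 b_2 = (a_1-a_2)\,b_1 + a_2\,(b_1-b_2),
\]
so that one factor always carries the difference $f_1-f_2$ while the other stays as a plain factor. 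It then remains to bound each factor, which is exactly what Lemmas~\ref{vepsestimate} and~\ref{psiestimate} provide.

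Two elementary identities do all the work. First, for a plain difference, $\psi_\epsilon^{(k)}(v_\epsilon+f_1)-\psi_\epsilon^{(k)}(v_\epsilon+f_2)=(f_1-f_2)\int_0^1\psi_\epsilon^{(k+1)}(v_\epsilon+\theta f_1+(1-\theta)f_2)\,d\theta$; second, the quadratic remainder obeys
\[
  \psi_\epsilon^{(k)}(v_\epsilon+f)-\psi_\epsilon^{(k)}(v_\epsilon)-\psi_\epsilon^{(k+1)}(v_\epsilon)\,f = f^2\int_0^1(1-\theta)\,\psi_\epsilon^{(k+2)}(v_\epsilon+\theta f)\,d\theta,
\]
and its difference in $f$ is treated by writing $f_1^2-f_2^2=(f_1-f_2)(f_1+f_2)$ and invoking the first identity once more on $\psi_\epsilon^{(k+2)}$. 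The smallness hypothesis $\|f_i/(v_\epsilon-1)\|_\infty\le\delta<1$ enters exactly as in Lemma~\ref{Hestimate}: along every segment in play one has $0<(1-\delta)(v_\epsilon-1)<v_\epsilon+\theta f_i-1<2v_+$, hence $|\psi_\epsilon^{(k)}|\le C\epsilon(v_\epsilon-1)^{-\gamma-k}$ there by Lemma~\ref{psiestimate}, while $|\partial_x^k v_\epsilon|\le C(v_\epsilon-1)^{k\gamma+1}\epsilon^{-k}$ by Lemma~\ref{vepsestimate}. Using $\phi_\epsilon(v_\epsilon)\cong\epsilon(v_\epsilon-1)^{-\gamma-1}$ and $v_\epsilon'\cong\epsilon^{-1}(v_\epsilon-1)^{\gamma+1}$, and occasionally trading a surplus factor $(v_\epsilon-1)$ for $|f_i|$ via $|f_i|\le\delta(v_\epsilon-1)$, each term collapses onto one of those on the right-hand side of \eqref{Hdiff}--\eqref{dx2Hdiff}.

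Concretely: for \eqref{Hdiff} I would use $H(f)=f^2\int_0^1(1-\theta)\psi_\epsilon''(v_\epsilon+\theta f)\,d\theta$ and split $\psi_\epsilon''(v_\epsilon+\theta f_1)f_1^2-\psi_\epsilon''(v_\epsilon+\theta f_2)f_2^2$; for \eqref{dxHdiff} I would start from $\partial_x H(f)=\partial_x v_\epsilon\,R_1(f)+\partial_x f\,R_0(f)$ with $R_1(f)=\psi_\epsilon'(v_\epsilon+f)-\psi_\epsilon'(v_\epsilon)-\psi_\epsilon''(v_\epsilon)f$ a quadratic remainder (giving the $\tfrac{\phi_\epsilon v_\epsilon'}{(v_\epsilon-1)^2}|f_1-f_2|(|f_1|+|f_2|)$ term) and $R_0(f)=\psi_\epsilon'(v_\epsilon+f)-\psi_\epsilon'(v_\epsilon)$, splitting the $R_0$ piece as $(\partial_x f_1-\partial_x f_2)R_0(f_1)+\partial_x f_2(R_0(f_1)-R_0(f_2))$ to produce exactly the $|\partial_x(f_1-f_2)||f_1|$ and $|\partial_x f_2||f_1-f_2|$ terms; and for \eqref{dx2Hdiff} I would run the same bookkeeping through each of the five terms of $\partial_x^2 H$ recorded in Lemma~\ref{Hestimate}, with the $\partial_x^2 v_\epsilon$- and $(\partial_x v_\epsilon)^2$-terms feeding the $\tfrac{(v_\epsilon-1)^{\gamma-2}}{\epsilon}$ contribution, the $\partial_x f\,\partial_x v_\epsilon$-term the $\tfrac{1}{(v_\epsilon-1)^2}$ contribution, the $(\partial_x f)^2$-term the two $\tfrac{\phi_\epsilon}{v_\epsilon-1}$-weighted first-derivative contributions, and the $\partial_x^2 f$-term the final $\tfrac{\phi_\epsilon}{v_\epsilon-1}$-weighted second-derivative contribution.

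I expect the only real difficulty to be the bookkeeping. No new analytic idea beyond those in Lemma~\ref{Hestimate} is needed; the care lies entirely in choosing, for each of the roughly a dozen resulting terms, which of $f_1,f_2$ carries the difference (the target estimates are deliberately asymmetric in $f_1$ and $f_2$) and in tracking the exact powers of $(v_\epsilon-1)$ and $\epsilon$ so that each term lands precisely in the stated form rather than in a merely comparable one.
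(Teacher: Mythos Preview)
Your proposal is correct and is exactly the approach indicated by the paper, which in fact omits the proof entirely, stating only that it ``follows the same approach as that of Lemma~\ref{Hestimate}'' and pointing to a similar result in \cite{dalibard2019existence}. Your write-up actually supplies the bookkeeping the paper leaves implicit, including the splitting $a_1b_1-a_2b_2=(a_1-a_2)b_1+a_2(b_1-b_2)$ and the integral-remainder identities for $\psi_\epsilon^{(k)}$, so it is more detailed than what the authors provide.
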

 The proof follows the same approach as that of Lemma \ref{Hestimate}, and a very similar result can be seen in Lemma 3.4 of \cite{dalibard2019existence}. Thus we omit the proof.
Integrating by parts and using \eqref{Hdiff},
\begin{equation*}
    \begin{aligned}
    |L_{1}| &= \left|\int_{0}^{t} \int_{\mathbb{R}} \partial_{x}\tilde{V}_{2}~( H(\partial_{x}V_{1}) - H(\partial_{x} V_{1}'))\right| \le \int_{0}^{t} \int_{\mathbb{R}} |\partial_{x}\tilde{V}_{2}| \left( \frac{\phi_{\epsilon}}{v_{\epsilon}-1} |\partial_{x}\tilde{V}_{1}| (|\partial_{x}V_{1}| + |\partial_{x}V_{1}'|) \right) \\[1ex] &\le C \int_{0}^{t} \int_{\mathbb{R}} \sqrt{\phi_{\epsilon}} |\partial_{x}\tilde{V}_{2}| \frac{|\partial_{x}\tilde{V}_{1}|}{v_{\epsilon}-1} \left( \sqrt{\phi_{\epsilon}}|\partial_{x}V_{1}|+\sqrt{\phi_{\epsilon}}|\partial_{x}V_{1}'|\right)  \\[1ex] &\le  \left\|  \tilde{\eta}_{1} \right\|_{L^{\infty}_{t,x}} \|\tilde{V}_{2}\|_{\mathcal{X}} (\|V_{1}\|_{\mathcal{X}} + \|V_{2}\|_{\mathcal{X}}).
    \end{aligned}
\end{equation*}  Using the definition of the norm,
\begin{equation}
    |L_{1}| \le \frac{C}{\epsilon^{3/2}}\|\tilde{V}_{2}\|_{\mathcal{X}} \| \tilde{V}_{1}\|_{\mathcal{X}} \left( \|V_{1}\|_{\mathcal{X}} + \|V_{1}'\|_{\mathcal{X}}\right).
\end{equation}
 To deal with $L_{2}$, we first integrate by parts to get
\begin{equation}
    \begin{aligned}
        L_{2} &= - \int_{0}^{t} \int_{\mathbb{R}} \tilde{\eta}_{2} \frac{v_{\epsilon}' (\partial_{x}H(\partial_{x}V_{1}) - \partial_{x}H(\partial_{x}V_{1}'))}{(v_{\epsilon}-1)^{2}} - \int_{0}^{t} \int_{\mathbb{R}} \frac{\partial_{x}\tilde{\eta}_{2}}{v_{\epsilon}-1} (\partial_{x}H(\partial_{x}V_{1}) - \partial_{x}H(\partial_{x}V_{1}')) \\[1ex] &=: (2a) + (2b).
    \end{aligned}
\end{equation}
Using \eqref{dxHdiff}, \begin{equation*}
    \begin{aligned}
    |(2a)| &\le C\int_{0}^{t} \int_{\mathbb{R}} |\tilde{\eta}_{2}| \frac{(v_{\epsilon}')^{2}\phi_{\epsilon}}{(v_{\epsilon}-1)^{4}} |\partial_{x} \tilde{V}_{1}| ( |\partial_{x}V_{1}| + |\partial_{x}V_{1}'|) + C\int_{0}^{t} \int_{\mathbb{R}} |\tilde{\eta}_{2}| \frac{v_{\epsilon}'\phi_{\epsilon}}{(v_{\epsilon}-1)^{3}} |\partial_{x}^{2} \tilde{V}_{1}| |\partial_{x}V_{1}| \\[1ex] &~~~+C \int_{0}^{t} \int_{\mathbb{R}} |\tilde{\eta}_{2}| \frac{v_{\epsilon}'\phi_{\epsilon}}{(v_{\epsilon}-1)^{3}} |\partial_{x}^{2}V_{1}'| |\partial_{x} \tilde{V}_{1}| \\[1ex] &\le C\left\|\frac{(v_\epsilon')^2}{(v_\epsilon-1)^4}\right\|_{L^\infty_{t,x}} \|\tilde{\eta}_{2}\|_{L^{\infty}_{t,x}} \|\sqrt{\phi_{\epsilon}} \partial_{x}\tilde{V}_{1}\|_{L^{2}_{t,x}} ( \|\sqrt{\phi_{\epsilon}} \partial_{x}V_{1}\|_{L^{2}_{t,x}} + \|\sqrt{\phi_{\epsilon}} \partial_{x}V_{1}'\|_{L^{2}_{t,x}}) \\[1ex] &~~~+ C\left\|\frac{v_\epsilon'}{(v_\epsilon-1)^2}\right\|_{L^\infty_{t,x}}\|\tilde{\eta}_{2}\|_{L^{\infty}_{t,x}} \left(\|\sqrt{\phi_{\epsilon}} \frac{\partial_{x}^{2}\tilde{V}_{1}}{v_{\epsilon}-1}\|_{L^{2}_{t,x}} \| \sqrt{\phi_{\epsilon}} \partial_{x}V_{1}\|_{L^{2}_{t,x}} +  \|\sqrt{\phi_{\epsilon}} \frac{\partial_{x}^{2}V_{1}'}{v_{\epsilon}-1}\|_{L^{2}_{t,x}} \| \sqrt{\phi_{\epsilon}} \partial_{x}\tilde{V}_{1}\|_{L^{2}_{t,x}}\right).
    \end{aligned}
\end{equation*} Using \eqref{GNinf} and \eqref{dx2Vcomputation}, we therefore find
\begin{equation*}
    \begin{aligned}
        |(2a)| &\le \frac{C}{\epsilon^{7/2}} \|\tilde{V}_{2}\|_{\mathcal{X}} \| \tilde{V}_{1}\|_{\mathcal{X}} \left( \|V_{1}\|_{\mathcal{X}} + \|V_{1}'\|_{\mathcal{X}}\right).
    \end{aligned}
\end{equation*}
For $(2b)$ we use \eqref{dxHdiff} to get
\begin{equation*}
    \begin{aligned}
        |(2b)| \le \int_{0}^{t} \int_{\mathbb{R}} |\partial_{x} \tilde{\eta}_{2}| \frac{ \phi_{\epsilon}}{(v_{\epsilon}-1)^2} \left( \frac{v_{\epsilon}'}{v_{\epsilon}-1} |\partial_{x}\tilde{V}_{1}|( |\partial_{x}V_{1}| + |\partial_{x}V_{1}'|) + |\partial_{x}^{2}\tilde{V}_{1}| |\partial_{x}V_{1}| + |\partial_{x}^{2}V_{1}'||\partial_{x}\tilde{V}_{1}| \right) =: b_{1} + b_{2} + b_{3}.
    \end{aligned}
\end{equation*}
We then have that
\begin{equation*}
    \begin{aligned}
        b_{1} &\le C\left\|\frac{v_\epsilon'}{(v_\epsilon-1)^2} \right\|_{L^\infty_{t,x}}\int_{0}^{t} \int_{\mathbb{R}} |\sqrt{\phi_{\epsilon}}\partial_{x} \tilde{\eta}_{2}| |\sqrt{\phi_{\epsilon}}\partial_{x}\tilde{V}_{1}|( |\eta_1| + |\eta_1'|) \\[1ex] 
        &\le \frac{C}{\epsilon} \|\sqrt{\phi_{\epsilon}}\partial_{x}\tilde{\eta}_{2}\|_{L^{2}_{t,x}}\|\sqrt{\phi_{\epsilon}}\partial_{x}\tilde{V}_{1}\|_{L^{2}_{t,x}} \left( \|\eta_{1}\|_{L^{\infty}_{t,x}} + \|\eta_{1}'\|_{L^{\infty}_{t,x}}\right) \\[1ex] 
        &\le \frac{C}{\epsilon^{7/2}} \|\tilde{V}_{1}\|_{\mathcal{X}}\|\tilde{V}_{2}\|_{\mathcal{X}} \left(  \|V_{1}\|_{\mathcal{X}} + \|V_{1}'\|_{\mathcal{X}}\right).
    \end{aligned}
\end{equation*}Next,
\begin{equation*}
    \begin{aligned}
        b_{2} &\le \int_{0}^{t} \int_{\mathbb{R}} |\sqrt{\phi_{\epsilon}}\partial_{x}\tilde{\eta}_{2}| |\sqrt{\phi_{\epsilon}} \frac{\partial_{x}^{2}\tilde{V}_{1}}{v_{\epsilon}-1} | |\eta_1| \le C \|\eta_{1}\|_{L^{\infty}_{t,x}}\|\sqrt{\phi_{\epsilon}}\partial_{x}\tilde{\eta}_{2}\|_{L^{2}_{t,x}}  \|\sqrt{\phi_{\epsilon}}\frac{\partial_{x}^{2}\tilde{V}_{1}}{v_{\epsilon}-1}\|_{L^{2}_{t,x}}   \\[1ex] &\le \frac{C}{\epsilon^{7/2}} \|\tilde{V}_{1}\|_{\mathcal{X}}\|\tilde{V}_{2}\|_{\mathcal{X}}\|V_{1}\|_{\mathcal{X}}.
    \end{aligned}
\end{equation*}
We find similarly that 
\begin{equation}
     b_{3} \le \frac{C}{\epsilon^{7/2}}\|\tilde{V}_{1}\|_{\mathcal{X}}\|\tilde{V}_{2}\|_{\mathcal{X}}\|V_{1}'\|_{\mathcal{X}},
\end{equation} which yields with the previous estimates that 
\begin{equation*}
    \epsilon^2 |L_2| \leq \epsilon^2  \frac{C}{\epsilon^{7/2}} \|\tilde{V}_{2}\|_{\mathcal{X}} \| \tilde{V}_{1}\|_{\mathcal{X}} \left( \|V_{1}\|_{\mathcal{X}} + \|V_{1}'\|_{\mathcal{X}}\right) = \frac{C}{\epsilon^{3/2}} \|\tilde{V}_{2}\|_{\mathcal{X}} \| \tilde{V}_{1}\|_{\mathcal{X}} \left( \|V_{1}\|_{\mathcal{X}} + \|V_{1}'\|_{\mathcal{X}}\right).
\end{equation*}
We now estimate the term $L_{3}$. Using \eqref{dx2Hdiff} and \eqref{phiderivatives},
\begin{equation}
    \begin{aligned}
       |L_{3}|  \le  &\frac{C}{\epsilon}\int_{0}^{t} \int_{\mathbb{R}} |\partial_{x}^{2} \tilde{\eta}_{2}| (v_\epsilon-1)^{\gamma-3} |\partial_{x}\tilde{V}_{1}|(|\partial_{x}V_{1}| + |\partial_{x}V_{1}'|) + C\int_{0}^{t} \int_{\mathbb{R}}\frac{|\partial_{x}^{2} \tilde{\eta}_{2} |}{(v_{\epsilon}-1)^{3}}(|\partial_{x}^{2}\tilde{V}_{1}||\partial_{x}V_{1}| + |\partial_{x}\tilde{V}_{1}||\partial_{x}^{2}V_{1}|) \\[1ex] &+ C\int_{0}^{t}\int_{\mathbb{R}} |\partial_{x}^{2} \tilde{\eta}_{2}| \frac{\phi_{\epsilon}}{(v_{\epsilon}-1)^2} |\partial_{x}^{2}\tilde{V}_{1}|(|\partial_{x}^{2}V_{1}| + |\partial_{x}^{2}V_{1}'|) + C\int_{0}^{t}\int_{\mathbb{R}} |\partial_{x}^{2} \tilde{\eta}_{2}| \frac{\phi_{\epsilon}}{(v_{\epsilon}-1)^2}(|\partial_{x}\tilde{V}_{1}||\partial_{x}^{3}V_{1}| + |\partial_{x}^{3}\tilde{V}_{1}||\partial_{x}V_{1}'|) \\[1ex] &+ C\int_{0}^{t}\int_{\mathbb{R}} |\partial_{x}^{2} \tilde{\eta}_{2} |\frac{\phi_{\epsilon}}{(v_{\epsilon}-1)^{3}} |\partial_{x}^{2}V_{1}'|^{2}|\partial_{x}\tilde{V}_{1}| =: \sum_{n=1}^{5}M_{n}.
    \end{aligned}
\end{equation}
Using \eqref{dx2Vcomputation},
\begin{equation*}
    \begin{aligned}
        M_{1} &\le \frac{C}{\epsilon}\left\|\frac{(v_\epsilon-1)^{\gamma-2}}{\phi_\epsilon}\right\|_{L^\infty_{t,x}} \|\sqrt{\phi_{\epsilon}} \partial_{x}^{2} \tilde{\eta}_{2}\|_{L^{2}_{t,x}} \|\sqrt{\phi_{\epsilon}} \partial_{x}\tilde{V}_{1}\|_{L^{2}_{t,x}} \left( \|\eta_{1}\|_{L^{\infty}_{t,x}} + \|\eta_{1}'\|_{L^{\infty}_{t,x}} \right) \\[1ex] &\le \frac{C}{\epsilon^{11/2}} \|\tilde{V}_{2}\|_{\mathcal{X}} \|\tilde{V}_{1}\|_{\mathcal{X}} (\|V_{1}\|_{\mathcal{X}} + \|V_{1}'\|_{\mathcal{X}} ).
    \end{aligned}
\end{equation*}
Similarly,
\begin{equation*}
    \begin{aligned}
        M_{2} &\le C\left\|\frac{1}{\phi_\epsilon(v_\epsilon-1)}\right\|_{L^\infty_{t,x}}\|\sqrt{\phi_{\epsilon}}\partial_{x}^{2}\tilde{\eta}_{2}\|_{L^{2}_{t,x}} \left(\|\sqrt{\phi_{\epsilon}}\frac{\partial_{x}^{2}\tilde{V}_{1}}{v_{\epsilon}-1} \|_{L^{2}_{t,x}} \|\eta_{1}\|_{L^{\infty}_{t,x}} + \|\tilde{\eta}_{1}\|_{L^{\infty}_{t,x}}\|\sqrt{\phi_{\epsilon}}\frac{\partial_{x}^{2}V_{1}}{v_{\epsilon}-1} \|_{L^{2}_{t,x}} 
 \right) \\[1ex] &\le \frac{C}{\epsilon^{11/2}} \|\tilde{V}_{1}\|_{\mathcal{X}} \|\tilde{V}_{2}\|_{\mathcal{X}}(\|V_{1}\|_{\mathcal{X}}+\|V_{1}'\|_{\mathcal{X}}).
    \end{aligned}
\end{equation*}
Next,
\begin{equation*}
    \begin{aligned}
        M_{3} &\le C\int_{0}^{t} \int_{\mathbb{R}} |\sqrt{\phi_{\epsilon}}\partial_{x}^{2}\tilde{\eta}_{2}| \left|\sqrt{\phi_{\epsilon}} \frac{\partial_{x}^{2}\tilde{V}_{1}}{v_{\epsilon}-1} \right| \left( \left|\frac{\partial_{x}^{2}V_{1}}{v_\epsilon-1}\right| + \left|\frac{\partial_{x}^{2}V_{1}'}{v_\epsilon-1}\right|\right) \\[1ex] &\le C \left\|\sqrt{\phi_{\epsilon}}\partial_{x}^{2}\tilde{\eta}_{2}\right\|_{L^{2}_{t,x}} \left\| \sqrt{\phi_{\epsilon}} \frac{\partial_{x}^{2}\tilde{V_{1}}}{v_{\epsilon}-1}\right\|_{L^{2}_{t}L^{\infty}_{x}}\left(\left\|\frac{\partial_{x}^{2}V_{1}}{v_\epsilon-1}\right\|_{L^{\infty}_{t}L^{2}_{x}} + \left\|\frac{\partial_{x}^{2}V_{1}'}{v_\epsilon-1}\right\|_{L^{\infty}_{t}L^{2}_{x}}  \right).
    \end{aligned}
\end{equation*} Using \eqref{dx2V}, \eqref{dxeta-L2-Linf} and \eqref{dxV-L2-Linf},
\begin{equation} \label{dx2VcomputationL2LINF}
    \left\| \sqrt{\phi_{\epsilon}} \frac{\partial_{x}^{2}\tilde{V}_{1}}{v_{\epsilon}-1}\right\|_{L^{2}_{t}L^{\infty}_{x}} \le \|\sqrt{\phi_{\epsilon}}\partial_{x}\tilde{\eta}_{1}\|_{L^{2}_{t}L^{\infty}_{x}} + \left\|\frac{v_\epsilon'}{(v_\epsilon-1)^2}\right\|_{L^\infty_{t,x}}\|\sqrt{\phi_{\epsilon}}\partial_{x}\tilde{V}_{1}\|_{L^{2}_{t}L^{\infty}_{x}} \le \frac{C}{\epsilon^{3/2}}\|\tilde{V}_{1}\|_{\mathcal{X}}.
\end{equation} On the other hand \eqref{dx2V} also gives us
\begin{equation} \label{dx2V-Linf-L2}
\left\|\frac{\partial_{x}^{2}V_{1}}{v_\epsilon-1}\right\|_{L^{\infty}_{t}L^{2}_{x}} \le C\|\partial_{x}\eta_{1}\|_{L^{\infty}_{t}L^{2}_{x}} + \left\|\frac{v_\epsilon'}{(v_\epsilon-1)^2}\right\|_{L^\infty_{t,x}}\|\eta_1\|_{L^{\infty}_{t}L^{2}_{x}} \le \frac{C}{\epsilon^{2}} \|V_{1}\|_{\mathcal{X}},
\end{equation} and so
\begin{equation*}
    M_{3} \le \frac{C}{\epsilon^{11/2}} \|\tilde{V}_{1}\|_{\mathcal{X}} \|\tilde{V}_{2}\|_{\mathcal{X}}( \|V_{1}\|_{\mathcal{X}} + \|V_{1}'\|_{\mathcal{X}}) .
\end{equation*}
For $M_{4}$, we use again \eqref{dx3v1}, thus,
\begin{equation*}
    \begin{aligned}
        M_{4} &\le C\|\sqrt{\phi_{\epsilon}}\partial_{x}^{2}\tilde{\eta}_{2}\|_{L^{2}_{t,x}} \|\tilde{\eta}_{1}\|_{L^{\infty}_{t,x}}   \left\|\sqrt{\phi_{\epsilon}} \frac{\partial_{x}^{3}V_{1}}{v_{\epsilon}-1} \right\|_{L^{2}_{t,x}} +C\|\sqrt{\phi_{\epsilon}}\partial_{x}^{2}\tilde{\eta}_{2}\|_{L^{2}_{t,x}} \|\eta_{1}'\|_{L^{\infty}_{t,x}}   \left\|\sqrt{\phi_{\epsilon}} \frac{\partial_{x}^{3}\tilde{V}_{1}}{v_{\epsilon}-1} \right\|_{L^{2}_{t,x}} \\[1ex] &\le \frac{C}{\epsilon^{11/2}}\|\tilde{V}_{1}\|_{\mathcal{X}}\|\tilde{V}_{2}\|_{\mathcal{X}}  ( \|V_{1}\|_{\mathcal{X}} + \|V_{1}'\|_{\mathcal{X}}) .
    \end{aligned}
\end{equation*}
Lastly, using \eqref{dx2Vcomputation} and \eqref{dx2V-Linf-L2},
\begin{equation*}
    \begin{aligned}
        M_{5} &\le   C \|\tilde{\eta}_{1}\|_{L^{\infty}_{t,x}} \|\sqrt{\phi_{\epsilon}}\partial_{x}^{2}\tilde{\eta}_{2}\|_{L^{2}_{t,x}} \left\|\sqrt{\phi_{\epsilon}} \frac{\partial_{x}^{2}V_{1}'}{v_{\epsilon}-1}\right\|_{L^{2}_{t}L^{\infty}_{x}} \left\|\frac{\partial_{x}^{2}V_{1}'}{v_\epsilon-1}\right\|_{L^{\infty}_{t}L^{2}_{x}} \\[1ex] &\le \frac{C}{\epsilon^{7}} \|\tilde{V}_{1}\|_{\mathcal{X}}\|\tilde{V}_{2}\|_{\mathcal{X}}\|V_{1}'\|_{\mathcal{X}}^{2} = \frac{C}{\epsilon^{11/2}}\|\tilde{V}_{1}\|_{\mathcal{X}}\|\tilde{V}_{2}\|_{\mathcal{X}}\|V_{1}'\|_{\mathcal{X}} \left(\frac{\|V_{1}'\|_{\mathcal{X}}}{\epsilon^{3/2}} \right) .
    \end{aligned}
\end{equation*}
Finally,
\begin{align*}
    \epsilon^4|L_3|&\leq \epsilon^4\frac{C}{\epsilon^{11/2}}\|\tilde{V}_{1}\|_{\mathcal{X}}\|\tilde{V}_{2}\|_{\mathcal{X}}  ( \|V_{1}\|_{\mathcal{X}} + \|V_{1}'\|_{\mathcal{X}}) \left( 1 +  \frac{\|V_{1}'\|_{\mathcal{X}}}{\epsilon^{3/2}} \right) \\
    &\leq \frac{C}{\epsilon^{3/2}}\|\tilde{V}_{1}\|_{\mathcal{X}}\|\tilde{V}_{2}\|_{\mathcal{X}}  ( \|V_{1}\|_{\mathcal{X}} + \|V_{1}'\|_{\mathcal{X}}) \left( 1 +  \frac{\|V_{1}'\|_{\mathcal{X}}}{\epsilon^{3/2}} \right)
\end{align*}
Gathering our estimates for $L_1, L_2, L_3$ and returning to \eqref{V2contractionPREestimates}, using  the fact that $V_1,V_1'\in B_\delta$, we get
\begin{equation}
\begin{aligned}
    \|\tilde{V}_{2}\|_{\mathcal{X}}^{2} &\le \frac{C}{\epsilon^{3/2}}\|\tilde{V}_{1}\|_{\mathcal{X}}\|\tilde{V}_{2}\|_{\mathcal{X}}  ( \|V_{1}\|_{\mathcal{X}} + \|V_{1}'\|_{\mathcal{X}}) \left( 1 +  \frac{\|V_{1}'\|_{\mathcal{X}}}{\epsilon^{3/2}} \right) \\[1ex] &\le \delta C \|\tilde{V}_{1}\|_{\mathcal{X}}\|\tilde{V}_{2}\|_{\mathcal{X}}.
\end{aligned}
\end{equation}
Therefore taking $\delta < 1/C$ we obtain that $\mathcal{A}^{\epsilon}$ is a contraction, as required. This concludes the proof of Proposition \ref{PROPstabilitycontraction}.

\section{Asymptotic stability of \texorpdfstring{$(u_{\epsilon}, v_{\epsilon})$}{Lg}} \label{sec:ASYMstab}
So far, we have proved that if $(u,v)$ is a solution to the original system such that $v - v_{\epsilon}, u - u_{\epsilon} \in L^{1}_{0}(\mathbb{R})$ for all positive times then the system can be re-expressed in terms of the integrated quantities $(V,W_0)$, as \eqref{ARintegrated}. Furthermore, provided the initial energy is small enough (i.e. satisfying \eqref{IDassumptionSTABILITY}) then there exists a unique strong solution to \eqref{ARintegrated}. In order to prove the existence and uniqueness of strong solutions to the original system \eqref{ARoriginal}, we will need the following result:
\begin{proposition} \label{PROPstabilitylongtime}
   Provided that
    \begin{align} \label{PROPstabHYP1}
   & (u-u_{\epsilon})(0) \in L^{1}_{0}(\mathbb{R}) \cap H^{1}(\mathbb{R}),\quad \frac{\partial_x(u-u_\epsilon)(0)}{v_\epsilon-1}\in L^2(\mathbb{R}), \\[1ex] &(v - v_{\epsilon})(0)  \in L^{1}_{0}(\mathbb{R}),   \label{PROPstabHYP2}
    \end{align} we have 
\end{proposition}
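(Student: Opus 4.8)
The plan is to transfer the well-posedness of the integrated system \eqref{ARintegrated}, obtained in Proposition \ref{PROPstabilitycontraction}, back to the original system and to uniquely characterize its solutions; the one genuinely nontrivial point is that the $L^1$--zero-mean structure of the perturbation persists for positive times, which is exactly what makes the integrated and the original formulations equivalent. \emph{Step 1 (reconstruction).} Let $V$ be the unique solution of \eqref{ARintegrated} and $W_0$ the time-independent integrated effective-velocity perturbation; set $v := v_\epsilon + \partial_x V$, $w := w_\epsilon + \partial_x W_0$ and $u := w + \phi_\epsilon(v)\partial_x v$ (recall that $w_\epsilon \equiv u_+$, by the ODE computation in the proof of Proposition \ref{travelingsolution}). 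From $V \in C([0,T];H^2)\cap L^2(0,T;H^3)$ we get $v - v_\epsilon = \partial_x V \in C([0,T];H^1)\cap L^2(0,T;H^2)$, and the smallness \eqref{THMIntExIDassumption} gives $\|\partial_x V/(v_\epsilon-1)\|_{L^\infty_{t,x}} = \|\eta\|_{L^\infty_{t,x}}\le C'\epsilon^{-3/2}\|V\|_{\mathcal X} < 1$, hence $v > 1$ pointwise (cf. Remark \ref{stayuncongested}), so $\phi_\epsilon(v)$ is smooth and finite. Writing $\phi_\epsilon(v)\partial_x v = \phi_\epsilon(v_\epsilon)\partial_x v_\epsilon + \partial_x(\psi_\epsilon(v)-\psi_\epsilon(v_\epsilon))$ and $\psi_\epsilon(v)-\psi_\epsilon(v_\epsilon) = \psi_\epsilon'(v_\epsilon)\partial_x V + H(\partial_x V)$, the bounds of Lemmas \ref{psiestimate} and \ref{Hestimate} together with $\eta \in C([0,T];H^1)$ yield $u - u_\epsilon \in C([0,T];H^1)$.

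\emph{Step 2 (the reconstructed pair solves \eqref{ARoriginal}).} Differentiating \eqref{ARintegrated} in $x$ and subtracting the stationary identity satisfied by $(w_\epsilon,v_\epsilon)$ recovers \eqref{ARvw} for $(w,v)$; since $u = w + \phi_\epsilon(v)\partial_x v$ this is exactly \eqref{lagrangian}. All the manipulations, in particular differentiating the flux $\psi_\epsilon(v)-\psi_\epsilon(v_\epsilon)$ once more, are licit because $v - v_\epsilon \in L^2(0,T;H^2)$ and $v > 1$. Finally, the bounds $1 < v \le C$ make the Lagrangian mass coordinate a bi-Lipschitz function of the Eulerian variable for each time, so $(u,v)$ induces a strong solution $(\rho,u) = (1/v,u)$ of \eqref{pressurelessns}, equivalently of \eqref{ARoriginal}, with the regularity claimed in Theorem \ref{THMstability}.

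\emph{Step 3 (propagation of the $L^1_0$ structure).} This is the crux. Subtracting the profile equation, $v - v_\epsilon$ solves the linear parabolic equation
\begin{equation*}
\partial_t(v-v_\epsilon) - \partial_x\!\big(\phi_\epsilon(v_\epsilon)\partial_x(v-v_\epsilon)\big) = \partial_x g, \qquad g := \partial_x W_0 + \big(\phi_\epsilon(v)-\phi_\epsilon(v_\epsilon)\big)\partial_x v ,
\end{equation*}
whose diffusion coefficient $\phi_\epsilon(v_\epsilon)$ is bounded below by a positive constant (it tends to a finite positive limit as $\xi\to+\infty$ and to $+\infty$ as $\xi\to-\infty$). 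Testing against a smooth regularization of $\mathrm{sgn}(v-v_\epsilon)$ (Kato's inequality), the diffusion term has the favorable sign and one obtains $\tfrac{d}{dt}\|(v-v_\epsilon)(t)\|_{L^1} \le \|\partial_x g(t)\|_{L^1}$; integrating in time and controlling $\|\partial_x g\|_{L^1_{t,x}}$ --- using that $\partial_x^2 W_0$ is time-independent and, under the regularity of the data, lies in $L^1$, that $\phi_\epsilon(v_\epsilon)\partial_x v_\epsilon = s(v_+-v_\epsilon)$ is bounded, that $\|\eta\|_{L^\infty_{t,x}} < 1$, and the $H^2$-in-$x$ and $\int_0^T D_1(\tau;V)\,d\tau < \infty$ bounds on $v-v_\epsilon$ (together with the pointwise estimates of Lemmas \ref{vepsestimate}, \ref{psiestimate}) --- gives $v - v_\epsilon \in L^\infty(0,T;L^1(\mathbb R))$, with continuity into $L^1$ from the Duhamel representation. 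The zero-mean property follows from $V(t,\cdot) \in H^2$, whence $V(t,+\infty) = 0 = \int_{\mathbb R}(v-v_\epsilon)(t)$. Since $u - u_\epsilon = \partial_x W_0 + \partial_x(\psi_\epsilon(v)-\psi_\epsilon(v_\epsilon))$ with each piece already controlled in $L^1$, we likewise get $u - u_\epsilon \in C([0,T];L^1_0(\mathbb R))$, while $w - w_\epsilon$ is constant in time.

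\emph{Step 4 (uniqueness) and the main difficulty.} Conversely, if $(u,v)$ is any strong solution of \eqref{ARoriginal} with the prescribed data, running Steps 2--3 in reverse shows $(w-w_\epsilon,v-v_\epsilon) \in C([0,T];L^1_0(\mathbb R))$, so the change of variables \eqref{integratedvariables} is justified, $(W_0,V)$ solves \eqref{ARintegrated}, and the uniqueness part of Theorem \ref{GWPintegrated} forces $(u,v)$ to coincide with the reconstructed solution of Step 1. I expect the principal obstacle to be Step 3: the flux $g$ contains $\phi_\epsilon(v)-\phi_\epsilon(v_\epsilon)$, which carries the singular factor $\phi_\epsilon$ near $\xi=-\infty$, so verifying $\partial_x g \in L^1_{t,x}$ requires carefully balancing the blow-up of $\phi_\epsilon(v_\epsilon)$ and its derivatives against the decay of $\partial_x V$ and $\partial_x^2 V$ encoded in the weighted norm $\|V\|_{\mathcal X}$ --- this is precisely where the pointwise estimates of Section \ref{sec:prelim} on $v_\epsilon$, $\psi_\epsilon$ and $H$ are indispensable.
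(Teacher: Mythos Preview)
Your overall architecture (reconstruction from $(W_0,V)$, verifying the system, $L^1_0$ propagation, uniqueness) matches the paper's, but Step~3 is genuinely different and contains a gap tied precisely to the singularity of $\phi_\epsilon$ that you yourself flag at the end.

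The paper does \emph{not} run Kato's inequality on the parabolic equation for $v-v_\epsilon$. Instead it works entirely through $u-u_\epsilon$: first it proves a weighted $H^1$ estimate by introducing $\mu:=\partial_x(u-u_\epsilon)/(v_\epsilon-1)$ (this is Lemma~\ref{LEMMAexistence-u-u_eps}, and it is exactly why the hypothesis $\partial_x(u-u_\epsilon)(0)/(v_\epsilon-1)\in L^2$ appears in \eqref{PROPstabHYP1}); only with $\sqrt{\phi_\epsilon}\partial_x\mu\in L^2_{t,x}$ in hand does it apply the sign-approximation trick to the equation for $u-u_\epsilon$ \emph{and then separately} to the equation for $\partial_x(u-u_\epsilon)$, obtaining $u-u_\epsilon\in L^\infty_tW^{1,1}_x$. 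Finally $v-v_\epsilon\in L^1$ comes from the \emph{mass} equation $\partial_t(v-v_\epsilon)=\partial_x(u-u_\epsilon)$, which carries no diffusive flux at all, so the only quantity one needs in $L^1$ is $\partial_x(u-u_\epsilon)$.

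Your route needs $\partial_x g\in L^1_{t,x}$, and in particular $\partial_x^2W_0\in L^1_x$; this is not among the hypotheses, and unpacking it via $w=u-\phi_\epsilon(v)\partial_x v$ lands you on terms like $\phi_\epsilon(v_\epsilon)\partial_x^2(v_0-v_\epsilon)$, which the assumption $v_0-v_\epsilon\in W^{2,1}$ does not control because $\phi_\epsilon$ blows up as $\xi\to-\infty$. The same obstruction bites your algebraic argument for $u-u_\epsilon\in L^1$: the identity $u-u_\epsilon=\partial_xW_0+\phi_\epsilon(v_\epsilon)\partial_x(v-v_\epsilon)+(\phi_\epsilon(v)-\phi_\epsilon(v_\epsilon))\partial_x v$ contains $\phi_\epsilon(v_\epsilon)\partial_x(v-v_\epsilon)$, and no Cauchy--Schwarz pairing with the weights available in $\|V\|_{\mathcal X}$ places this in $L^1$ (neither $\sqrt{\phi_\epsilon}$ nor $\sqrt{\phi_\epsilon}(v_\epsilon-1)$ lies in $L^2(\mathbb R)$). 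In short, the paper's detour through the $\mu$-estimate and the two successive Kato arguments on $u-u_\epsilon$ and $\partial_x(u-u_\epsilon)$ is not incidental: it is what allows every $L^1$ bound to be closed via Cauchy--Schwarz against quantities of the form $\sqrt{\phi_\epsilon}\,\partial_x^k v_\epsilon/(v_\epsilon-1)$, which \emph{are} in $L^2$ by Lemma~\ref{dxkveps}.
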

\begin{enumerate}
    \item $\|u-u_{\epsilon}\|_{L^{\infty}(0,t; L^{1}(\mathbb{R}))} + \|v-v_{\epsilon}\|_{L^{\infty}(0,t; L^{1}(\mathbb{R}))} \le C(t, \epsilon), $
    \item $(u-u_{\epsilon})(t), (v-v_{\epsilon})(t) \in L^{1}_{0}(\mathbb{R})$ for each $t > 0$.
\end{enumerate} Our strategy of proof is as follows. \begin{itemize}
    \item \textbf{Step 1:} Establish the stability of the profile $u_{\epsilon}$. This involves bounding the quantity $u-u_{\epsilon}$ in $L^{\infty}_{t}H^{1}_{x}$, which will in particular be needed for the next steps.
    \item \textbf{Step 2:} Use the previous estimates to show that $u-u_{\epsilon}$ and $v-v_{\epsilon}$ remain in $L^{1}$ for positive times.
    \item \textbf{Step 3:} Complete the remaining part of Proposition \ref{PROPstabilitylongtime} using the estimates from the previous steps.
\end{itemize}

For the remainder of this section we define $v := v_{\epsilon} + \partial_{x}V$, $w := w_{\epsilon} + \partial_{x}W_0$ and $u:= w + \phi_{\epsilon}(v) \partial_{x}v$. Recall that $\eta = \partial_x V/(v_\epsilon-1)$ is bounded, with $\|\eta\|_{L^\infty_{t,x}} \leq C\epsilon^{-3/2}\|V\|_{\mathcal{X}}\leq C\delta $ since $V\in B_\delta$, hence $v\geq 1+(1-C\delta)(v_\epsilon-1)>1$ (up to reducing $\delta$) and $\phi_\epsilon(v)$ is well defined. As a consequence, the pair $(w,v)$ satisfies the system
\begin{equation*} 
    \left\{
    \begin{aligned}
        &\partial_t w = 0,\\
        &\partial_t v -\partial_x w - \partial_x (\phi_\epsilon(v)\partial_x v)=0.
    \end{aligned}
    \right.
\end{equation*}

\subsection{Stability of the profile \texorpdfstring{$u-u_{\epsilon}$}{Lg}}

From the system solved by $(w,v)$, we deduce that $u$  is a solution of 
\begin{equation} 
    \label{u-u_epsEQN}
        \partial_t (u - u_\epsilon) - \partial_x(\phi_\epsilon(v_\epsilon)\partial_{x}(u-u_{\epsilon})) = \partial_x[(\phi_\epsilon(v)-\phi_\epsilon(v_\epsilon))\partial_x u] =: G.
\end{equation}
We see that the quantity $G$ appearing on the right-hand side depends on the difference $\phi_\epsilon(v)-\phi_\epsilon(v_\epsilon)$. In order to control $G$, we will make use of the following lemma, which is analogous to Lemma \ref{Hestimate}.
\begin{lemma}
    Let $\Delta\phi_\epsilon(f) := \phi_\epsilon(v_{\epsilon}+f) - \phi_\epsilon(v_{\epsilon})$. Then we have
    \begin{align}  \label{Ef}
        &|\Delta\phi_\epsilon(f)| \le C \phi_{\epsilon}(v_{\epsilon}) \frac{|f|}{v_{\epsilon}-1}, \\[1ex]  \label{dxEf}
        &|\partial_{x}\Delta\phi_\epsilon(f)| \le C \phi_{\epsilon}(v_{\epsilon})\left(   \frac{\partial_{x}v_{\epsilon}|f|}{(v_{\epsilon}-1)^{2}}   +  \frac{|\partial_{x}f|}{v_{\epsilon}-1}  \right), \\[1ex]  \label{dx2Ef}
        &|\partial_{x}^{2}\Delta\phi_\epsilon(f)| \le C \phi_{\epsilon}(v_{\epsilon})\left( \frac{(\partial_{x}v_\epsilon)^{2}|f|}{(v_{\epsilon}-1)^{3}} + \frac{\partial_{x}v_{\epsilon} |\partial_x f|}{(v_{\epsilon}-1)^2} + \left| \frac{\partial_{x}f}{v_{\epsilon}-1} \right|^{2} + \frac{|\partial_{x}^{2}f|}{v_{\epsilon}-1}   \right).
    \end{align}
    As a consequence, we also have that
    \begin{equation} \label{phiv}
        |\phi_{\epsilon}(v)| =|\phi_\epsilon(v_\epsilon+\partial_x V)| \le |\phi_{\epsilon}(v) - \phi_{\epsilon}(v_{\epsilon})| + \phi_{\epsilon}(v_{\epsilon}) \le (|\eta| + 1)\phi_{\epsilon}(v_{\epsilon}).
    \end{equation}
\end{lemma}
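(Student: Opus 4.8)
The plan is to mirror the proof of Lemma~\ref{Hestimate} almost verbatim, with $\psi_\epsilon$ replaced by its derivative $\phi_\epsilon=\psi_\epsilon'$. The only input needed is pointwise control of $\phi_\epsilon^{(k)}$ near $v_\epsilon$: writing $\phi_\epsilon^{(k)}=\psi_\epsilon^{(k+1)}$ and invoking Lemma~\ref{psiestimate}, one has $|\phi_\epsilon^{(k)}(v)|\le C\epsilon(v-1)^{-\gamma-1-k}$ for $v\in(1,\bar v)$. As in Lemma~\ref{Hestimate}, the estimates are understood for $f$ with $\|f/(v_\epsilon-1)\|_{L^\infty}\le\delta<1$ (which holds in the application, where $f=\partial_x V$ and $\|\eta\|_{L^\infty_{t,x}}\le C\delta$); then for every $v$ with $|v-v_\epsilon|\le|f|$ one has $(1-\delta)(v_\epsilon-1)<v-1<2v_+$, so that $|\phi_\epsilon^{(k)}(v)|\le C\phi_\epsilon(v_\epsilon)(v_\epsilon-1)^{-k}$ by \eqref{diffusioncoefficient}.

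For \eqref{Ef}, the mean value theorem gives $|\Delta\phi_\epsilon(f)|\le|f|\sup_{|v-v_\epsilon|\le|f|}|\phi_\epsilon'(v)|\le C\phi_\epsilon(v_\epsilon)|f|/(v_\epsilon-1)$. For \eqref{dxEf}, differentiate to get
\[
\partial_x\Delta\phi_\epsilon(f)=v_\epsilon'\big(\phi_\epsilon'(v_\epsilon+f)-\phi_\epsilon'(v_\epsilon)\big)+\partial_x f\,\phi_\epsilon'(v_\epsilon+f),
\]
bound the bracket by $C\phi_\epsilon(v_\epsilon)|f|/(v_\epsilon-1)^2$ (first-order Taylor with $\phi_\epsilon''$) and the last factor by $C\phi_\epsilon(v_\epsilon)/(v_\epsilon-1)$, then use Lemma~\ref{vepsestimate} for $v_\epsilon'$. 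Differentiating once more yields exactly five terms: one with $v_\epsilon''(\phi_\epsilon'(v_\epsilon+f)-\phi_\epsilon'(v_\epsilon))$, one with $(v_\epsilon')^2(\phi_\epsilon''(v_\epsilon+f)-\phi_\epsilon''(v_\epsilon))$, one with $v_\epsilon'\partial_x f\,\phi_\epsilon''(v_\epsilon+f)$, one with $\partial_x^2 f\,\phi_\epsilon'(v_\epsilon+f)$, and one with $(\partial_x f)^2\phi_\epsilon''(v_\epsilon+f)$. Estimating the Taylor remainders as above and inserting $|v_\epsilon'|\cong\epsilon^{-1}(v_\epsilon-1)^{\gamma+1}$, $|v_\epsilon''|\le C\epsilon^{-2}(v_\epsilon-1)^{2\gamma+1}\cong(v_\epsilon')^2/(v_\epsilon-1)$ from Lemma~\ref{vepsestimate}, each of the five terms lands in one of the four buckets of \eqref{dx2Ef}; checking that the powers of $(v_\epsilon-1)$ and $\epsilon$ match is the only bookkeeping required.

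Finally \eqref{phiv} is immediate: since $v=v_\epsilon+\partial_x V$ with $|\partial_x V|/(v_\epsilon-1)=|\eta|<1$, we have $v>1$ so $\phi_\epsilon(v)$ is well defined, and the triangle inequality together with \eqref{Ef} gives $|\phi_\epsilon(v)|\le|\Delta\phi_\epsilon(\partial_x V)|+\phi_\epsilon(v_\epsilon)\le(C|\eta|+1)\phi_\epsilon(v_\epsilon)$. There is no genuine obstacle here — the lemma is a routine Taylor-expansion estimate entirely parallel to Lemma~\ref{Hestimate}, and the only point demanding a little care is the enumeration and matching of the chain-rule terms in the second-derivative bound \eqref{dx2Ef}.
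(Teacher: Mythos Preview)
Your proposal is correct and follows exactly the approach the paper indicates: the paper omits the proof, stating only that ``it follows the same argument as that of Lemma~\ref{Hestimate},'' which is precisely what you do---replace $\psi_\epsilon$ by $\phi_\epsilon=\psi_\epsilon'$, invoke Lemma~\ref{psiestimate} for the derivative bounds $|\phi_\epsilon^{(k)}(v)|\le C\phi_\epsilon(v_\epsilon)(v_\epsilon-1)^{-k}$ under the smallness hypothesis on $f/(v_\epsilon-1)$, and then run the Taylor/chain-rule bookkeeping. One cosmetic remark: in your treatment of \eqref{dxEf} you say ``then use Lemma~\ref{vepsestimate} for $v_\epsilon'$,'' but the stated bound keeps $\partial_x v_\epsilon$ explicit, so no substitution is needed there; Lemma~\ref{vepsestimate} is only used (via $|v_\epsilon''|\lesssim(v_\epsilon')^2/(v_\epsilon-1)$) to merge the $v_\epsilon''$ term into the first bucket of \eqref{dx2Ef}.
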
 We omit the proof since it follows the same argument as that of Lemma \ref{Hestimate}. We now wish to prove the following existence result.
\begin{lemma} \label{LEMMAexistence-u-u_eps}
Suppose that $(U_{0}, V_{0}) \in H^{2}(\mathbb{R}) \times H^{3}(\mathbb{R})$ is such that \eqref{IDassumptionSTABILITY} is satisfied by $(W_{0}, V_{0})$, and that \eqref{PROPstabHYP1} is also satisfied. Consider the solution $(W_0,V) \in B_{\delta} \subset \mathcal{X}$ of \eqref{ARintegrated} obtained in Proposition \ref{PROPstabilitycontraction}. Then there exists a unique solution $u-u_{\epsilon}$ to \eqref{u-u_epsEQN} such that \begin{equation}
    u-u_{\epsilon} \in C([0,T]; H^{1}(\mathbb{R})) \cap L^{2}(0,T; H^{2}(\mathbb{R})). \end{equation}
   Moreover, $u-u_{\epsilon}$ satisfies
\begin{equation} \label{u-u_epsL2bounds}
    \begin{aligned}
          \sup_{t\in[0,T]}\left(\sum_{k=0}^{1} \left[ E_{k}(t; u-u_{\epsilon}) + \int_{0}^{t} D_{k}(\tau; u-u_{\epsilon})~d\tau\right] \right) \le  C,
    \end{aligned}
\end{equation}\color{black} where $C=C(\epsilon,\delta,v_+,\gamma,s)$.
\end{lemma}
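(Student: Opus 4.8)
The plan is to regard \eqref{u-u_epsEQN} as a \emph{linear} parabolic equation for the single unknown $g := u - u_{\epsilon}$, since the coefficient $\phi_{\epsilon}(v_{\epsilon})$ and the datum $v = v_{\epsilon} + \partial_{x}V$ are already known from Proposition \ref{PROPstabilitycontraction}. Writing $\partial_{x}u = \partial_{x}g + \partial_{x}u_{\epsilon}$ and using the notation $\Delta\phi_{\epsilon}$ of the previous lemma, the source decomposes as
\[
    G = \partial_{x}\bigl[\Delta\phi_{\epsilon}(\partial_{x}V)\,\partial_{x}g\bigr] + \partial_{x}\bigl[\Delta\phi_{\epsilon}(\partial_{x}V)\,\partial_{x}u_{\epsilon}\bigr] =: G_{\mathrm{lin}} + G_{\mathrm{src}},
\]
where $G_{\mathrm{lin}}$ is a perturbation of the principal part carrying the small factor $|\Delta\phi_{\epsilon}(\partial_{x}V)|/\phi_{\epsilon}(v_{\epsilon}) \le C\|\eta\|_{L^{\infty}_{t,x}} = O(\delta)$ (by \eqref{Ef}), while $G_{\mathrm{src}}$ depends only on $V$ and $v_{\epsilon}$. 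Existence of $g \in C([0,T];H^{1}) \cap L^{2}(0,T;H^{2})$ follows from a standard Galerkin approximation once the a priori estimates below are in hand (equivalently, one may verify that the function $\partial_{x}W_{0} + \phi_{\epsilon}(v)\partial_{x}v - \phi_{\epsilon}(v_{\epsilon})\partial_{x}v_{\epsilon}$ associated with $V$ solves \eqref{u-u_epsEQN}), and uniqueness is immediate from linearity via the $k=0$ estimate; the initial energies $E_{0}(0;g), E_{1}(0;g)$ are finite by \eqref{PROPstabHYP1}. The real content is therefore the bound \eqref{u-u_epsL2bounds}, obtained by replaying the energy method of Section \ref{sectionGWP} with $g$ playing the role of $V$ and ``$W_{0} + H(\partial_{x}V)$'' replaced by ``$\Delta\phi_{\epsilon}(\partial_{x}V)\,\partial_{x}u$''.

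For the $k=0$ estimate I would multiply \eqref{u-u_epsEQN} by $g$ and integrate. The contribution of $G_{\mathrm{lin}}$ equals $-\int_{0}^{t}\!\int \Delta\phi_{\epsilon}(\partial_{x}V)\,|\partial_{x}g|^{2} \le C\delta \int_{0}^{t}\!\int \phi_{\epsilon}(v_{\epsilon})|\partial_{x}g|^{2}$ and is absorbed into the dissipation. For $G_{\mathrm{src}}$, using \eqref{Ef}, $\partial_{x}u_{\epsilon} = -s v_{\epsilon}'$, and the identity $\phi_{\epsilon}(v_{\epsilon})v_{\epsilon}' = s(v_{+}-v_{\epsilon})$ coming from \eqref{ODE}, one checks that the weight $s(v_{+}-v_{\epsilon})/\bigl((v_{\epsilon}-1)\phi_{\epsilon}(v_{\epsilon})\bigr)$ is bounded by $C/\epsilon$, so that
\[
    \Bigl|\int_{0}^{t}\!\int G_{\mathrm{src}}\,g\Bigr| \le \frac{C}{\epsilon}\,\|\sqrt{\phi_{\epsilon}}\,\partial_{x}V\|_{L^{2}_{t,x}}\,\|\sqrt{\phi_{\epsilon}}\,\partial_{x}g\|_{L^{2}_{t,x}} \le C\delta^{2}\epsilon + \tfrac{1}{8}\int_{0}^{t}\!D_{0}(\tau;g)\,d\tau ,
\]
where we have used Young's inequality and $\int_{0}^{t}D_{0}(\tau;V)\,d\tau \le \|V\|_{\mathcal{X}}^{2}/c_{0} \le \delta^{2}\epsilon^{3}/c_{0}$. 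This yields $E_{0}(t;g) + \int_{0}^{t}D_{0}(\tau;g)\,d\tau \le E_{0}(0;g) + C$, uniformly on $[0,T]$.

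For the $k=1$ estimate I introduce the good unknown $\zeta := \partial_{x}g/(v_{\epsilon}-1)$, which, by exactly the computation that produced \eqref{etaLIN}, satisfies $\partial_{t}\zeta - \partial_{x}(\phi_{\epsilon}(v_{\epsilon})\partial_{x}\zeta) = \mathcal{L}_{\epsilon}(\zeta) + (v_{\epsilon}-1)^{-1}\partial_{x}^{2}\bigl[\Delta\phi_{\epsilon}(\partial_{x}V)\,\partial_{x}u\bigr]$ with the \emph{same} operator $\mathcal{L}_{\epsilon}$. Multiplying by $\zeta$ and integrating, the $\mathcal{L}_{\epsilon}$-term is handled verbatim by \eqref{linearestimate1} of Lemma \ref{linearoperatorestimates}, which also brings in a term $C\epsilon^{-2}\int_{0}^{t}D_{0}(\tau;g)$ controlled by the $k=0$ bound; for the source I integrate by parts once and estimate using \eqref{dxEf}, \eqref{dx2Ef}, \eqref{phiv}, the bounds $|\partial_{x}^{k}v_{\epsilon}| \le C(v_{\epsilon}-1)^{k\gamma+1}/\epsilon^{k}$ of Lemma \ref{vepsestimate}, the identity \eqref{dx2V}, and the $B_{\delta}$-bounds on $\eta, \partial_{x}\eta, \partial_{x}^{2}\eta$ together with the companion estimates for $\sqrt{\phi_{\epsilon}}\,\partial_{x}^{2}V/(v_{\epsilon}-1)$, $\sqrt{\phi_{\epsilon}}\,\partial_{x}^{3}V/(v_{\epsilon}-1)$ and $\sqrt{\phi_{\epsilon}}\,\partial_{x}\eta$ in $L^{2}_{t}L^{\infty}_{x}$ already recorded in Section \ref{sectionGWP}; the contribution of $G_{\mathrm{lin}}$ once more carries the factor $\|\eta\|_{L^{\infty}_{t,x}}$ and, after the integration by parts, is absorbed into $D_{1}(\cdot;g)$ and $D_{0}(\cdot;g)$. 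Combining the $k=0$ and $k=1$ estimates (the constants here being allowed to depend on $\epsilon$, so the precise $\epsilon$-weighting of Section \ref{sectionGWP} is unnecessary) yields \eqref{u-u_epsL2bounds}; finally, since $\inf_{x}\phi_{\epsilon}(v_{\epsilon}) > 0$ and $v_{\epsilon}-1 \le v_{+}-1$, the control of $D_{0}(\cdot;g)$ and $D_{1}(\cdot;g)$ upgrades to $g \in L^{\infty}(0,T;H^{1}) \cap L^{2}(0,T;H^{2})$.

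The main obstacle is, exactly as in the analysis of the integrated system, the weighted bookkeeping in the $k=1$ source estimate: one must verify that the singular weights $\phi_{\epsilon}(v_{\epsilon})^{\pm 1/2}$ and $(v_{\epsilon}-1)^{-j}$ produced by $(v_{\epsilon}-1)^{-1}\partial_{x}^{2}[\Delta\phi_{\epsilon}(\partial_{x}V)\,\partial_{x}u]$ — in particular the terms proportional to $\partial_{x}^{2}v_{\epsilon}\cdot\Delta\phi_{\epsilon}(\partial_{x}V)$ and $(\partial_{x}v_{\epsilon})^{2}\cdot\Delta\phi_{\epsilon}(\partial_{x}V)$, which have no counterpart in the $V$-equation — recombine with Lemmas \ref{vepsestimate} and \ref{dxkveps} and the $B_{\delta}$-norm so that $\gamma \ge 1$ remains the only structural requirement. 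On the other hand, the absence of any genuine nonlinearity in $g$ means that no fixed-point argument is needed: once the a priori estimates are available, existence and uniqueness follow from standard linear parabolic theory.
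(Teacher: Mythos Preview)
Your proposal is correct and follows essentially the same approach as the paper: the paper also multiplies \eqref{u-u_epsEQN} by $u-u_{\epsilon}$ for $k=0$, introduces the same weighted unknown $\mu := \partial_{x}(u-u_{\epsilon})/(v_{\epsilon}-1)$ for $k=1$, invokes \eqref{linearestimate1} for the $\mathcal{L}_{\epsilon}$-term, and then carries out the detailed $k=1$ source estimate you sketch (their treatment splits $\int \mu\,\partial_{x}G/(v_{\epsilon}-1)$ into eight integrals $I_{1}$--$I_{8}$, exactly the terms produced by your integration by parts combined with \eqref{Ef}--\eqref{dxEf} and $\partial_{x}u=\partial_{x}g+\partial_{x}u_{\epsilon}$). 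The paper's continuity-in-time argument is precisely the ``explicit formula'' route you mention parenthetically: it writes $u-u_{\epsilon}=\partial_{x}W_{0}+\phi_{\epsilon}(v)\partial_{x}v-\phi_{\epsilon}(v_{\epsilon})\partial_{x}v_{\epsilon}$ and deduces $C([0,T];H^{1})$ from the already-established regularity $V\in C([0,T];H^{2})$.
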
 \begin{proof}
We start from the equation satisfied by $u-u_{\epsilon}$:
\begin{equation*} 
        \partial_t (u - u_\epsilon) - \partial_x(\phi_\epsilon(v_\epsilon)\partial_{x}(u-u_{\epsilon})) = \partial_x[(\phi_\epsilon(v)-\phi_\epsilon(v_\epsilon))\partial_x u] = G.
\end{equation*}
 Multiplying by $u-u_{\epsilon}$ and integrating by parts where appropriate, we have
\begin{equation*}
    \begin{aligned}
        \frac{1}{2} \int_{\mathbb{R}} |u-u_{\epsilon}|^{2}(t)~dx + \int_{0}^{t}\int_{\mathbb{R}} \phi_\epsilon(v_\epsilon) |\partial_{x}(u-u_{\epsilon})|^{2}~dxd\tau &-  \frac{1}{2} \int_{\mathbb{R}} |u-u_{\epsilon}|^{2}(0)~dx \\[1ex] &= - \int_{0}^{t}\int_{\mathbb{R}} \partial_{x}(u-u_{\epsilon}) (\phi_{\epsilon}(v) - \phi_{\epsilon}(v_{\epsilon}))\partial_{x}u~dxd\tau.
    \end{aligned}
\end{equation*}
With \eqref{Ef} and the triangle inequality, the right-hand side can be bounded by
\begin{align*}
    \left|\int_{0}^{t}\int_{\mathbb{R}} \partial_{x}(u-u_{\epsilon}) (\phi_{\epsilon}(v) - \phi_{\epsilon}(v_{\epsilon}))\partial_{x}u\right| \leq & C\|\eta\|_{L^\infty_{t,x}}\|\sqrt{\phi_\epsilon}\partial_x(u-u_\epsilon)\|_{L^2_{t,x}}\|\sqrt{\phi_\epsilon}\partial_x u\|_{L^2_{t,x}} \\
    \leq & C\|\eta\|_{L^\infty_{t,x}}\|\sqrt{\phi_\epsilon}\partial_x(u-u_\epsilon)\|_{L^2_{t,x}}^2 + C\|\eta\|_{L^\infty_{t,x}}\|\sqrt{\phi_\epsilon}\partial_xu_\epsilon\|_{L^2_{t,x}}^2 \\
    \leq & C\delta \|\sqrt{\phi_\epsilon}\partial_x(u-u_\epsilon)\|_{L^2_{t,x}}^2 + C(\delta,v_+,s,\epsilon),
\end{align*}


by using $\partial_x u_\epsilon = -s\partial_x v_\epsilon$ and $\|\eta\|_\infty \leq \delta$.
Hence we obtain the following estimate (up to reducing $\delta$):
\begin{equation*}
    \begin{aligned}
        \int_{\mathbb{R}} |u-u_{\epsilon}|^{2}(t)~dx + \int_{0}^{t}\int_{\mathbb{R}} \phi_\epsilon(v_\epsilon) |\partial_{x}(u-u_{\epsilon})|^{2}~dx \leq  \int_{\mathbb{R}} |u-u_{\epsilon}|^{2}(0)~dx + C(\delta,v_+,s,\epsilon).
    \end{aligned}
\end{equation*}
We now wish to find an estimate for $\partial_{x}(u-u_\epsilon)$. Notice that \eqref{u-u_epsEQN} has the same structure as the equation for $V$ (i.e. \eqref{linearised}), up to the source term.
As we did for $V$, we define 
\begin{equation}
\mu := \mu(\partial_x u) = \frac{\partial_x(u-u_\epsilon)}{v_\epsilon-1}.
\end{equation}
The computations that we did for $V$ imply that $\mu$ is solution of 
\begin{equation}
\partial_t \mu -\partial_x(\phi_\epsilon(v_\epsilon) \partial_x \mu) + \mathcal{L}_\epsilon(\mu) = \frac{\partial_x G}{v_\epsilon-1},
\end{equation}
where $\mathcal{L}_\epsilon$ is the same as in Equation \eqref{etaLIN}. Multiplying by $\mu$ and integrating yields
\begin{equation}
\label{basicmuestimate}
\frac{1}{2}\int_{\mathbb{R}}\mu^2(t) +\int_0^t\int_{\mathbb{R}}\phi_\epsilon(v_\epsilon)(\partial_x\mu)^2 = \frac{1}{2}\int_{\mathbb{R}}\mu^2|_{t=0} -\int_0^t\int_{\mathbb{R}}\mathcal{L}_\epsilon(\mu)\mu +\int_0^t\int_{\mathbb{R}}\mu\frac{\partial_x G}{v_\epsilon-1}.
\end{equation}
By \eqref{linearestimate1}, for  $\alpha=1/2$, there exists $C>0$ such that 
\begin{equation*}
\left|\int_0^t\int_{\mathbb{R}}\mathcal{L}_\epsilon(\mu)\mu \right| \leq \frac{1}{2} \int_0^t\int_{\mathbb{R}}\phi_\epsilon(v_\epsilon)(\partial_x\mu)^2 + C \int_0^t\int_{\mathbb{R}}\phi_\epsilon(v_\epsilon)[\partial_x(u-u_\epsilon)]^2.
\end{equation*}
The first integral can be absorbed by the left-hand side of \eqref{basicmuestimate}. The second integral can be bounded by using the energy estimate satisfied by $u-u_\epsilon$. Hence it is enough to bound the last integral in the right-hand side of \eqref{basicmuestimate}. We compute that 
\begin{align*}
\int_0^t\int_{\mathbb{R}}\mu\frac{\partial_x G}{v_\epsilon-1} = & -\int_0^t\int_{\mathbb{R}}\partial_x\mu\frac{G}{v_\epsilon-1} + \int_0^t\int_{\mathbb{R}}\mu\frac{\partial_x v_\epsilon G}{(v_\epsilon-1)^2} \\
=& -\int_0^t\int_{\mathbb{R}}\partial_x\mu \partial_x(\phi_\epsilon(v)-\phi_\epsilon(v_\epsilon))\mu - \int_0^t\int_{\mathbb{R}} \partial_x \mu \partial_x(\phi_\epsilon(v)-\phi_\epsilon(v_\epsilon))\frac{\partial_x u_\epsilon}{v_\epsilon-1}\\
& - \int_0^t\int_{\mathbb{R}}\partial_x\mu(\phi_\epsilon(v)-\phi_\epsilon(v_\epsilon))\frac{\partial_x^2(u-u_\epsilon)}{v_\epsilon-1} -\int_0^t\int_{\mathbb{R}}\partial_x\mu(\phi_\epsilon(v)-\phi_\epsilon(v_\epsilon))\frac{\partial_x^2 u_\epsilon}{v_\epsilon-1} \\
& +\int_0^t\int_{\mathbb{R}}\frac{\mu\partial_x v_\epsilon}{v_\epsilon-1} \partial_x(\phi_\epsilon(v)-\phi_\epsilon(v_\epsilon))\mu +\int_0^t\int_{\mathbb{R}} \frac{\mu\partial_x v_\epsilon}{v_\epsilon-1} \partial_x(\phi_\epsilon(v)-\phi_\epsilon(v_\epsilon))\frac{\partial_x u_\epsilon}{v_\epsilon-1}\\
& + \int_0^t\int_{\mathbb{R}}\frac{\mu\partial_x v_\epsilon}{(v_\epsilon-1)^2}(\phi_\epsilon(v)-\phi_\epsilon(v_\epsilon))\partial_x^2(u-u_\epsilon) + \int_0^t\int_{\mathbb{R}}\frac{\mu\partial_x v_\epsilon}{(v_\epsilon-1)^2}(\phi_\epsilon(v)-\phi_\epsilon(v_\epsilon))\partial_x^2 u_\epsilon \\
& =: \sum_{k=1}^8 I_k.
\end{align*}
By \eqref{dxEf} and Young's inequality, for any $\alpha>0$,
\begin{align*}
|I_1| \leq & C\int_0^t\int_{\mathbb{R}}\phi_\epsilon(v_\epsilon)|\mu\partial_x\mu|\left(\frac{\partial_x v_\epsilon |\eta|}{v_\epsilon-1}+ \frac{|\partial_{x}^{2} V|}{v_\epsilon-1}\right) \\
\leq & C\|\sqrt{\phi_\epsilon}\partial_x\mu\|_{L^2_{t,x}}\left(\frac{1}{\epsilon}\|\sqrt{\phi_\epsilon}\partial_x(u-u_\epsilon)\|_{L^2_{t,x}}\|\eta\|_{L^\infty_{t,x}}+\|\mu\|_{L^\infty_{t} L^2_{x}}\left\|\sqrt{\phi_\epsilon}\frac{\partial_{x}^{2}V}{v_\epsilon-1}\right\|_{L^2_{t} L^\infty_{x}}\right)\\
 \leq & \alpha \|\sqrt{\phi_\epsilon}\partial_x\mu\|_{L^2_{t,x}}^2 + \frac{C\delta^2}{\alpha}\left(\frac{1}{\epsilon^2}\|\sqrt{\phi_\epsilon}\partial_x(u-u_\epsilon)\|_{L^2_{t,x}}^2 + \|\mu\|_{L^\infty_{t} L^2_{x}}^2\right),
\end{align*}
where the last line uses the fact that 
\begin{equation*}
    \|\eta\|_{L^\infty} + \left\|\sqrt{\phi_\epsilon}\frac{\partial_x^2V}{v_\epsilon-1}\right\|_{L^2_tL^\infty_x}\leq \frac{C}{\epsilon^{3/2}}\delta \epsilon^{3/2}\leq C\delta.
\end{equation*}
(see Equation \eqref{dx2VcomputationL2LINF}). Similarly,
\begin{align*}
|I_2| \leq ~ & C\int_0^t\int_{\mathbb{R}}\phi_\epsilon(v_\epsilon)|\partial_x\mu|\left(\frac{\partial_x v_\epsilon |\eta|}{v_\epsilon-1}+ \frac{|\partial_{x}^{2} V|}{v_\epsilon-1}\right)\frac{\partial_x u_\epsilon}{v_\epsilon-1} \\
 \leq & \alpha  \|\sqrt{\phi_\epsilon}\partial_x\mu\|_{L^2_{t,x}}^2 + \frac{C}{\alpha}\left(\|\sqrt{\phi_\epsilon}\partial_x V\|_{L^2_{t,x}}^2\| \frac{\partial_x v_\epsilon}{(v_\epsilon-1)^{2}}\|^2_{L^\infty_{t,x}}+\|\sqrt{\phi_\epsilon}\frac{\partial_{x}^{2}V}{v_\epsilon-1}\|^2_{L^2_{t,x}}\right)\| \frac{\partial_x u_\epsilon}{v_\epsilon-1}\|^2_{L^\infty_{t,x}} \\
\leq & \alpha  \|\sqrt{\phi_\epsilon}\partial_x\mu\|_{L^2_{t,x}}^2 + C(\alpha,\epsilon,\delta,s,v_+,\gamma).
\end{align*}
By \eqref{Ef},
\begin{align*}
|I_3| \leq ~ & C\int_0^t\int_{\mathbb{R}}\phi_\epsilon |\partial_x\mu||\eta|\frac{|\partial_x^2(u-u_\epsilon)|}{v_\epsilon-1} \\
\leq & \alpha  \|\sqrt{\phi_\epsilon}\partial_x\mu\|_{L^2_{t,x}}^2 + \frac{C\delta^2}{\alpha}\left(\|\sqrt{\phi_\epsilon}\partial_x\mu\|_{L^2_{t,x}}^2 + \frac{1}{\epsilon^2} \|\sqrt{\phi_\epsilon}\partial_x(u-u_\epsilon)\|_{L^2_{t,x}}^2\right)
\end{align*}
by using 
\begin{equation*}
\frac{\partial^2_x(u-u_\epsilon)}{v_\epsilon-1} = \partial_x\mu + \frac{\partial_x v_\epsilon\partial_x(u-u_\epsilon)}{(v_\epsilon-1)^2}.
\end{equation*}
Then,
\begin{align*}
|I_4| &\leq C\int_0^t\int_{\mathbb{R}} \phi_{\epsilon}|\partial_x\mu|  |\eta|\frac{\partial_x^2u_\epsilon}{v_\epsilon-1} \leq \alpha  \|\sqrt{\phi_\epsilon}\partial_x\mu\|_{L^2_{t,x}}^2 + \frac{C\delta^2}{\alpha}\|\sqrt{\phi_\epsilon}\frac{\partial_x^2 u_\epsilon}{v_\epsilon-1}\|_{L^2_{t,x}}^2 \\
& \leq \alpha\|\sqrt{\phi_\epsilon}\partial_x\mu\|_{L^2_{t,x}}^2 + C(\alpha,\epsilon,\delta,s,v_+,\gamma),
\end{align*}
\begin{align*}
|I_5| \leq ~ & C \int_0^t\int_{\mathbb{R}}\mu^2\phi_\epsilon\frac{\partial_x v_\epsilon}{v_\epsilon-1}\left(\frac{\partial_x v_\epsilon |\eta|}{v_\epsilon-1}+ \frac{|\partial_{x}^{2} V|}{v_\epsilon-1}\right) \\
& \leq \frac{C\delta}{\epsilon^2}  \int_0^t\int_{\mathbb{R}}\phi_\epsilon [\partial_x(u-u_\epsilon)]^2 + \frac{1}{\epsilon}\|\mu\|_{L^\infty_{t} L^2_{x}}\|\sqrt{\phi_\epsilon}\partial_x(u-u_\epsilon)\|_{L^2_{t,x}}\left\|\sqrt{\phi_\epsilon}\frac{\partial_x^2V}{v_\epsilon-1}\right\|_{L^2_{t}L^\infty_{x}} \\
& \leq \alpha \|\mu\|_{L^\infty_{t} L^2_{x}}^2 + C(\alpha,\epsilon,\delta,s,v_+,\gamma),
\end{align*}
\begin{align*}
|I_6| \leq ~ & C \int_0^t\int_{\mathbb{R}}|\mu|\phi_\epsilon\frac{\partial_x v_\epsilon}{v_\epsilon-1}\left(\frac{\partial_x v_\epsilon |\eta|}{v_\epsilon-1}+ \frac{|\partial_{x}^{2} V|}{v_\epsilon-1}\right)\frac{\partial_x u_\epsilon}{v_\epsilon-1} \\
 \leq & \frac{C}{\epsilon^2}\left\|\sqrt{\phi_\epsilon}\partial_x(u-u_\epsilon)\right\|_{L^2_{t,x}}\left(\frac{1}{\epsilon}\left\|\sqrt{\phi_\epsilon}\partial_x V\right\|_{L^2_{t,x}} + \left\|\sqrt{\phi_\epsilon}\frac{\partial_x^2V}{v_\epsilon-1}\right\|_{L^2_{t,x}}\right) \\
 \leq & C(\epsilon,\delta,s,v_+,\gamma),
\end{align*}
\begin{align*}
|I_7| \leq & C\int_0^t\int_{\mathbb{R}}|\mu|\phi_\epsilon |\eta|\frac{\partial_x v_\epsilon\partial_x^2(u-u_\epsilon)}{(v_\epsilon-1)^2} \\
\leq & \frac{C}{\epsilon}\|\eta\|_{L^\infty_{t,x}}\left\|\sqrt{\phi_\epsilon}\partial_x(u-u_\epsilon)\right\|_{L^2_{t,x}}\left(\left\|\sqrt{\phi_\epsilon}\partial_x\mu\right\|_{L^2_{t,x}} + \frac{1}{\epsilon}\left\|\sqrt{\phi_\epsilon}\partial_x(u-u_\epsilon)\right\|_{L^2_{t,x}} \right)\\
\leq & \alpha\left\|\sqrt{\phi_\epsilon}\partial_x\mu\right\|_{L^2_{t,x}}^2 + C(\alpha,\epsilon,\delta,v_+,s,\gamma),
\end{align*}
and 
\begin{align*}
|I_8| \leq & C\int_0^t\int_{\mathbb{R}}|\mu|\phi_\epsilon |\eta|\frac{\partial_x v_\epsilon|\partial_x^2u_\epsilon|}{(v_\epsilon-1)^2} \leq  \frac{C}{\epsilon^3}\left\|\sqrt{\phi_\epsilon}\partial_x(u-u_\epsilon)\right\|_{L^2_{t,x}}\left\|\sqrt{\phi_\epsilon}\partial_x V\right\|_{L^2_{t,x}} \leq C(\epsilon,\delta,s,v_+,\gamma).
\end{align*}
Hence, coming back to Equation \eqref{basicmuestimate}, by taking $\alpha>0$ sufficiently small, up to reducing $\delta$, we obtain that there exists $\alpha_0>0$ such that 
\begin{equation*}
\alpha_0\left(\left\|\mu\right\|_{L^\infty_{t} L^2_{x}}^2 + \left\|\sqrt{\phi_\epsilon}\partial_x\mu\right\|_{L^2_{t,x}}^2\right) \leq \frac{1}{2}\|\mu_0\|_{L^2_{x}}^2+C(\alpha_0,\epsilon,\delta,s,v_+,\gamma),
\end{equation*}
with $C(\alpha_0,\epsilon,\delta,s,v_+,\gamma)\rightarrow +\infty$ as $\epsilon\rightarrow 0$.

\color{black}

With this estimate, the existence and uniqueness of $u-u_{\epsilon}$ is classical. It remains to verify that $u-u_{\epsilon}$ is continuous in time. Let $f \in H^{1}(\mathbb{R})$ with $\|f\|_{H^{1}(\mathbb{R})} = 1$. Then using $\partial_{t}(v-v_{\epsilon}) = \partial_{x}(u-u_{\epsilon})$, we have
   \begin{equation*}
       \begin{aligned}
            \langle \partial_{t}(v-v_{\epsilon}), f \rangle_{H^{-1} \times H^{1}} &= (\partial_{x}(u-u_{\epsilon}), f) = (u-u_{\epsilon}, \partial_{x}f)  \\[1ex] &\le  \| (u-u_{\epsilon})(t)\|_{L^{2}_{x}} 
        \end{aligned}
    \end{equation*}
    and so $\|\partial_t(v-v_\epsilon)\|_{L^\infty_t H^{-1}_x}\leq \|u-u_\epsilon\|_{L^\infty_t L^2_x}$.
    Since $v-v_\epsilon= \partial_x V$, we have that $\partial_{t}\partial_{x}V \in L^{\infty}(0,T; H^{-1}(\mathbb{R}))\subset L^{2}(0,T; H^{-1}(\mathbb{R}))$. Since we also have $\partial_{x}V \in L^{2}(0,T, H^{2}(\mathbb{R}))$, it follows (e.g. from Theorem II.5.13. of \cite{boyer_mathematical_2012}) that $\partial_{x}V \in C([0,T]; H^{1}(\mathbb{R}))$ and so $V \in C([0,T]; H^{2}(\mathbb{R}))$. Now since $w = u - \phi_{\epsilon}(v)\partial_x v$ and $w_{\epsilon} = u_{\epsilon} - \phi_{\epsilon}(v_{\epsilon})\partial_x v_\epsilon$, we have that $\partial_{x}W_0 = u-u_{\epsilon} + \phi_{\epsilon}'(v)\partial_{x}v - \phi_{\epsilon}'(v_{\epsilon})\partial_{x}v_{\epsilon}$. After rearranging, we find that \begin{equation*}
       u - u_{\epsilon} = \partial_{x}W_0 - \phi_{\epsilon}'(v)\partial_{x}V + (\phi_{\epsilon}'(v) - \phi_{\epsilon}'(v_{\epsilon}))\partial_{x}v_{\epsilon}.
    \end{equation*} The continuity in time of $W_0, v_{\epsilon}, \phi_\epsilon'(v)$ and $V$ imply that $u- u_{\epsilon} \in C([0,T]; H^{1}(\mathbb{R}))$ as claimed.
\end{proof}
\subsection{Bounding \texorpdfstring{$(v-v_{\epsilon})$ in $L^{1}(\mathbb{R})$}{Lg}} We first obtain a $L^{1}_{x}$ bound for $u-u_{\epsilon}$. Let $\{j_{n}\}_{n \in \mathbb{N}} \subset C^{2}(\mathbb{R})$ be a convex approximation of $| \cdot |$ with $|j_{n}'| \le C, ~j_{n}'' > 0$. For example, we may take $  j_{n}(z) := \sqrt{z^{2} + n^{-1}}-\sqrt{n^{-1}}$. Multiplying \eqref{u-u_epsEQN} by $j_{n}'(u-u_{\epsilon})$, we get
\begin{equation} \label{uL1preestimates}
    \begin{aligned}
        &\int_{\mathbb{R}} j_{n}(u-u_{\epsilon})(t)~dx-\int_{\mathbb{R}} j_{n}(u-u_{\epsilon})(0)~dx + \int_{0}^{t} \int_{\mathbb{R}} j_{n}''(u-u_{\epsilon}) \phi_{\epsilon}(v) |\partial_{x}(u-u_{\epsilon})|^{2} \\[1ex] =& \int_{0}^{t} \int_{\mathbb{R}} j_{n}'(u-u_{\epsilon}) \left( (\phi_{\epsilon}(v)-\phi_{\epsilon}(v_{\epsilon})) \partial_{x}^{2}u_{\epsilon} + (\partial_{x}\phi_{\epsilon}(v) - \partial_{x}\phi_{\epsilon}(v_{\epsilon}))\partial_{x}u_{\epsilon} \right).
    \end{aligned}
\end{equation}
Using $\partial_{x}u_{\epsilon} = -s\partial_{x}v_{\epsilon}$, \eqref{Ef}, \eqref{dxEf} and \eqref{dx2Vcomputation}, the right-hand side can be bounded by
\begin{equation*}
    \begin{aligned}
         &C\int_{0}^{t} \int_{\mathbb{R}} \phi_{\epsilon}(v_{\epsilon}) \partial_{x}V \frac{|\partial_{x}^{2}v_{\epsilon}|}{v_{\epsilon}-1} + C\int_{0}^{t} \int_{\mathbb{R}} \phi_{\epsilon}(v_{\epsilon}) \left( 
         \frac{\partial_{x}v_{\epsilon} |\partial_{x}V|}{(v_{\epsilon}-1)^{2}}  + \frac{|\partial_{x}^{2}V|}{v_{\epsilon}-1}
         \right) \partial_{x}v_{\epsilon}
   \\[1ex]  \le & C\|\sqrt{\phi_{\epsilon}}\partial_{x}V\|_{L^{2}_{t,x}} \left\|\sqrt{\phi_{\epsilon}}\frac{\partial_{x}^2 v_\epsilon}{v_\epsilon-1}\right\|_{L^{2}_{t,x}}  \\& +  C\|\sqrt{\phi_{\epsilon}}\partial_{x}v_{\epsilon}\|_{L^{2}_{t,x}} \left( \|\sqrt{\phi_{\epsilon}
   }\frac{\partial_{x}^{2}V}{v_{\epsilon}-1}\|_{L^{2}_{t,x}} + \left\|\frac{v_\epsilon'}{(v_\epsilon-1)^2}\right\|_{L^\infty_{t,x}}\|\sqrt{\phi_{\epsilon}}\partial_{x}V\|_{L^{2}_{t,x}}\right).
    \end{aligned} 
\end{equation*} By Lemma \ref{dxkveps},
\begin{equation*}
    \left\|\sqrt{\phi_{\epsilon}}\frac{\partial_{x}^2 v_\epsilon}{v_\epsilon-1}\right\|_{L^{2}_{t,x}} + \|\sqrt{\phi_{\epsilon}}\partial_{x}v_{\epsilon}\|_{L^{2}_{t,x}} \leq C(s,\gamma,v_+,\epsilon)t^{1/2},
\end{equation*} 
Hence we obtain a global bound of the form $C(\epsilon,\delta,s,v_+,\gamma)t^{1/2}$ for the right-hand side. Note that the second term on the left-hand side of \eqref{uL1preestimates} has a positive sign and therefore can be discarded. Returning to \eqref{uL1preestimates} and taking $n \to \infty$, we get
\begin{equation} \label{uL1postestimates}
    \begin{aligned}
        \|(u-u_{\epsilon})(t)\|_{L^{1}_{x}} \le \|(u-u_{\epsilon})(0)\|_{L^{1}_{x}} + C(\epsilon,\delta,s,v_+,\gamma)t^{1/2}.
    \end{aligned}
\end{equation} 

We now wish to find an estimate for $v-v_{\epsilon}$. From \eqref{lagrangian} we have that \begin{equation*}
    \partial_{t}(v-v_{\epsilon}) = \partial_{x}(u-u_{\epsilon}).
\end{equation*} Multiplying this equation by $j_{n}'(v-v_{\epsilon})$ where $j_{n}$ is defined as above and integrating in space, we have
\begin{equation} \label{v-veps_L1_preESTIMATE}
    \frac{d}{dt}\int_{\mathbb{R}} j_{n}(v-v_{\epsilon})~dx = \int_{\mathbb{R}} j_{n}'(v-v_{\epsilon}) \partial_{x}(u-u_{\epsilon})~dx \le  \int_{\mathbb{R}} |\partial_{x}(u-u_{\epsilon})|~dx.
\end{equation}Therefore, to obtain a $L^{1}(\mathbb{R})$ estimate for $v-v_{\epsilon}$ it is sufficient to bound $\partial_{x}(u-u_{\epsilon})$ in $L^{1}(\mathbb{R})$. Now, the evolution equation of $\partial_{x}(u-u_{\epsilon})$ can be expressed as 
\begin{align*}
    \partial_{t}\partial_{x}(u-u_{\epsilon}) - \partial_{x}(\phi_\epsilon(v)\partial_{x}^{2}(u-u_{\epsilon})) &= \partial_{x}\left( (\phi_\epsilon(v)-\phi_\epsilon(v_{\epsilon})) \partial_{x}^{2}u_{\epsilon} \right) + \partial_{x}[\partial_{x}(\phi_\epsilon(v)-\phi_\epsilon(v_\epsilon))\partial_{x}u] \\[1ex] &~~~+ \partial_{x}[\partial_{x}\phi_\epsilon(v_\epsilon)\partial_{x}(u-u_{\epsilon})].
\end{align*} Multiplying by $j_{n}'(\partial_{x}(u-u_{\epsilon}))$ and integrating by parts where appropriate,
\begin{equation} \label{dx(u-ueps)_preESTIMATES}
    \begin{aligned}
         &\int_{\mathbb{R}} j_{n}(\partial_{x}(u-u_{\epsilon}))(t)~dx  - \int_{\mathbb{R}} j_{n}(\partial_{x}(u-u_{\epsilon}))(0)~dx  + \int_{0}^{t} \int_{\mathbb{R}} j_{n}''(\partial_{x}(u-u_{\epsilon})) ~\phi_\epsilon(v) |\partial_{x}^{2}(u-u_{\epsilon})|^{2} \\[1ex] =& \int_{0}^{t} \int_{\mathbb{R}} j_{n}'(\partial_{x}(u-u_{\epsilon})) \partial_{x}\left((\phi_\epsilon(v)-\phi_\epsilon(v_{\epsilon}))\partial_{x}^{2}u_{\epsilon})\right)  +\int_{0}^{t} \int_{\mathbb{R}} j_{n}'(\partial_{x}(u-u_{\epsilon})) \partial_{x}[\partial_{x}(\phi_\epsilon(v)-\phi_\epsilon(v_\epsilon))\partial_{x}u]  \\[1ex] &+ \int_{0}^{t} \int_{\mathbb{R}} j_{n}'(\partial_{x}(u-u_{\epsilon})) \partial_{x} \left[ \partial_{x}\phi_\epsilon(v_\epsilon)\partial_{x}(u-u_{\epsilon} )\right] =: \sum_{n=1}^{3}I_{n}.
    \end{aligned}
\end{equation}
Firstly using \eqref{dxEf},
\begin{equation*}
    \begin{aligned}
        |I_{1}| &= \left|\int_{0}^{t} \int_{\mathbb{R}} j_{n}'(\partial_{x}(u-u_{\epsilon})) \left((\partial_{x}\phi_\epsilon(v)-\partial_{x}\phi_\epsilon(v_{\epsilon})) \partial_{x}^{2}u_{\epsilon} + (\phi_\epsilon(v)-\phi_\epsilon(v_{\epsilon}) \partial_{x}^{3}u_{\epsilon} \right)\right| \\[1ex] &\le C\int_{0}^{t} \int_{\mathbb{R}} |j_{n}'(\partial_{x}(u-u_{\epsilon}))| \phi_{\epsilon}(v_{\epsilon}) \left( |\partial_{x}^{2}v_{\epsilon}| \left( \frac{\partial_{x}v_{\epsilon}|\partial_{x}V|}{(v_{\epsilon}-1)^{2}}  + \frac{|\partial_{x}^{2}V|}{v_{\epsilon}-1} \right)   + \frac{|\partial_{x}^{3}v_{\epsilon}|}{v_{\epsilon}-1} |\partial_{x}V|     \right) \\[1ex] &\le \left\|\sqrt{\phi_{\epsilon}} \frac{\partial_{x}^{3}v_{\epsilon}}{v_{\epsilon}-1}\right\|_{L^{2}_{t,x}}\|\sqrt{\phi_{\epsilon}}\partial_{x}V\|_{L^{2}_{t,x}} + \left\|\sqrt{\phi_{\epsilon}} \frac{\partial_{x}^{2}v_{\epsilon}}{v_{\epsilon}-1}\right\|_{L^{2}_{t,x}}\left( \frac{C}{\epsilon} \|\sqrt{\phi_{\epsilon}}\partial_{x}V\|_{L^{2}_{t,x}} +  \|\sqrt{\phi_{\epsilon}}\partial_{x}^{2}V\|_{L^{2}_{t,x}} \right) \\
        &\leq C(\epsilon,\delta,s,v_+,\gamma)t^{1/2}.
    \end{aligned}
\end{equation*} 

Then 
\begin{equation*}
    I_2 =  \int_0^t\int_{\mathbb{R}}j_n'(\partial_{x}(u-u_{\epsilon}))\partial_x^2[\phi_\epsilon(v)-\phi_\epsilon(v_\epsilon)]\partial_x u + \int_0^t\int_{\mathbb{R}}j_n'(\partial_{x}(u-u_{\epsilon}))\partial_x[\phi_\epsilon(v)-\phi_\epsilon(v_\epsilon)]\partial_x^2 u =: K_1 +K_2.
\end{equation*}
Using \eqref{dx2Ef}, 
\begin{align*}
    |K_1|\leq & C\int_0^t\int_{\mathbb{R}}\phi_{\epsilon}(v_{\epsilon})\left( \frac{(\partial_{x}v_\epsilon)^{2}|\partial_x V|}{(v_{\epsilon}-1)^{3}} + \frac{\partial_{x}v_{\epsilon} |\partial_x^2 V|}{(v_{\epsilon}-1)^2} + \left| \frac{\partial_{x}^2V}{v_{\epsilon}-1} \right|^{2} + \frac{|\partial_{x}^{3}V|}{v_{\epsilon}-1}   \right)|\partial_x u| \\
    \leq & C\|\sqrt{\phi_\epsilon}\partial_x u\|_{L^2_{t,x}}\left(\|\sqrt{\phi_\epsilon}\partial_x V\|_{L^2_{t,x}}+ \left\|\sqrt{\phi_\epsilon}\frac{\partial_x^2V }{v_\epsilon-1} \right\|_{L^2_{t,x}}+ \left\|\sqrt{\phi_\epsilon}\frac{\partial_x^3V }{v_\epsilon-1} \right\|_{L^2_{t,x}} \right. \\
    & \left.+\left\|\sqrt{\phi_\epsilon}\frac{\partial_x^2V}{v_\epsilon-1}\right\|_{L^2L^\infty}\left\|\frac{\partial_x^2V}{v_\epsilon-1}\right\|_{L^\infty L^2} \right),
\end{align*}
and the triangle inequality 
\begin{equation*}
    \|\sqrt{\phi_\epsilon}\partial_x u\|_{L^2_{t,x}} \leq \|\sqrt{\phi_\epsilon}\partial_x (u-u_\epsilon)\|_{L^2_{t,x}} + \|\sqrt{\phi_\epsilon}\partial_x u_\epsilon\|_{L^2_{t,x}}
\end{equation*}
enables to bound $K_1$. For $K_2$, \eqref{dxEf} yields
\begin{align*}
    |K_2| \leq & C\int_0^t\int_{\mathbb{R}}\phi_{\epsilon}(v_{\epsilon})\left(   \frac{\partial_{x}v_{\epsilon}|\partial_x V|}{(v_{\epsilon}-1)^{2}}   +  \frac{|\partial_{x}^2 V|}{v_{\epsilon}-1}  \right)|\partial_x^2 u| \\
    \leq & \left(\left\|\sqrt{\phi_\epsilon}\partial_x V\right\|_{L^2_{t,x}}+\left\|\sqrt{\phi_\epsilon}\frac{\partial_x^2 V}{v_\epsilon-1}\right\|_{L^2_{t,x}}\right)\left(\left\|\sqrt{\phi_\epsilon}\partial_x^2(u-u_\epsilon)\right\|_{L^2_{t,x}}+\left\|\sqrt{\phi_\epsilon}\partial_x^2 u_\epsilon\right\|_{L^2_{t,x}}\right).
\end{align*}
For $I_3$, we decompose 
\begin{align*}
    |I_3| = &C\left|\int_0^t\int_{\mathbb{R}}j_n'\left[\left(\phi_\epsilon''(v_\epsilon)(\partial_x v_\epsilon)^2+\phi_\epsilon'(v_\epsilon)\partial_x^2v_\epsilon\right)\partial_x(u-u_\epsilon) + \phi_\epsilon'(v_\epsilon)\partial_x v_\epsilon \partial_x^2(u-u_\epsilon) \right]\right| \\
    \leq & C\left[\left\|\frac{\phi_\epsilon''(v_\epsilon)v_\epsilon'}{\phi_\epsilon(v_\epsilon)}\right\|_{L^\infty_{t,x}}\|\sqrt{\phi_\epsilon}\partial_x v_\epsilon\|_{L^2_{t,x}}+\left\|\frac{\phi_\epsilon'(v_\epsilon)(v_\epsilon-1)}{\phi_\epsilon(v_\epsilon)}\right\|_{L^\infty_{t,x}}\left\|\sqrt{\phi_\epsilon}\frac{\partial_x^2 v_\epsilon}{v_\epsilon-1}\right\|_{L^2_{t,x}}\right]\|\sqrt{\phi_\epsilon}\partial_x(u-u_\epsilon)\|_{L^2_{t,x}} \\
    &+ C\left\|\frac{\phi_\epsilon'(v_\epsilon)(v_\epsilon-1)}{\phi_\epsilon(v_\epsilon)}\right\|_{L^\infty_{t,x}}\|\sqrt{\phi_\epsilon}\partial_x v_\epsilon\|_{L^2_{t,x}}\left\|\sqrt{\phi_\epsilon}\frac{\partial_x^2(u-u_\epsilon)}{v_\epsilon-1}\right\|_{L^2_{t,x}}.
\end{align*}
\color{black}
Passing to the limit $n \to \infty$ with the help of Fatou's lemma gives us 
 \begin{equation}
     \|\partial_{x}(u-u_{\epsilon})(t)\|_{L^{1}_{x}} \le  \|\partial_{x}(u-u_{\epsilon})(0)\|_{L^{1}_{x}} +C(\epsilon,\delta,s,v_+,\gamma)t^{1/2}.
 \end{equation}
We have now shown that $(u-u_{\epsilon})(t), ~(v-v_{\epsilon})(t)$ belong to $L^{1}(\mathbb{R})$ for any $t>0$. Note that since the equations for both quantities are conservative, we actually have that $(u-u_{\epsilon})(t), ~(v-v_{\epsilon})(t) \in L^{1}_{0}(\mathbb{R})$. This marks the end of the proof of Proposition \ref{PROPstabilitylongtime}.
 \subsection{Concluding the proofs of Theorem \ref{THMstability} and Corollary \ref{CORstability}}
We are now in a position to justify the equivalence between the original and integrated systems, which will allow us to conclude the proof of Theorem \ref{THMstability}. Firstly, consider the original system with some initial data $(u_{0}, v_{0})$ which satisfies the assumptions of Theorem \ref{THMstability}, and let $(V,W)$ be the corresponding solution of the integrated system \eqref{ARintegrated}. Since we defined $(u,v) := (u_{\epsilon} + \partial_{x}U, v_{\epsilon} + \partial_{x}V)$, it follows that $(u,v)$ is a solution to the original system. Moreover, Lemma \ref{LEMMAexistence-u-u_eps} tells us that $\partial_{x}U \in C([0,T]; H^{1}(\mathbb{R}))$ and Theorem \ref{GWPintegrated} tells us that $\partial_{x}V \in C([0,T];H^{1}(\mathbb{R})) \cap L^{2}(0,T; H^{2}(\mathbb{R}))$. Thus, $(u,v) \in (u_{\epsilon}, v_{\epsilon}) + (C([0,T];H^{1}(\mathbb{R})))^{2}$ solves the original system and using Proposition \ref{PROPstabilitylongtime} we have that $(u,v)(t) \in L^{1}_{0}(\mathbb{R})$ for any $t \in (0,T]$.
It remains to show that the solution is unique. To this end, suppose that $(u,v)$ is another solution to the original system with initial data $(u_{0}, v_{0})$ satisfying the assumptions of Theorem \ref{THMstability}. Then we may write $(u,v) = (u_{\epsilon}, v_{\epsilon}) + (f,g)$ for some $f,g \in C([0,T]; H^{1}(\mathbb{R}) \cap L^{1}_{0}(\mathbb{R}))$. Now, define the integrated quantities
\begin{equation*}
    U(t,x) := \int_{-\infty}^{x} (u(t,z)-u_{\epsilon}(t,z))~dz, \quad V(t,x) := \int_{-\infty}^{x} (v(t,z)-v_{\epsilon}(t,z))~dz,
\end{equation*}
and \begin{equation*}
    W := U + \phi_{\epsilon}(v) - \phi_{\epsilon}(v_{\epsilon}).
\end{equation*} Then $(V,W)$ is a solution of the integrated system \eqref{ARintegrated}. Furthermore, thanks to the hypotheses on the initial data $(u,v)$ we have that $\partial_{x}V \in C([0,T]; H^{1}(\mathbb{R}) \cap L^{1}_{0}(\mathbb{R})) \cap L^{2}(0,T; H^{2}(\mathbb{R}))$. Therefore $V \in \mathcal{X}$. Moreover, due to the assumptions on the initial data, we in fact have $V \in B_{\delta}$ and so since $\mathcal{A}^{\epsilon}$ is a contraction on $B_{\delta}$ we easily deduce that $V$ is uniquely determined by the initial data. Since $W$ is constant in time, we conclude that the pair $(W,V)$ is unique and therefore so is $(u,v)$.

Finally, let us explain how Corollary \ref{CORstability} is obtained. With the additional assumption \eqref{CORassumption}, the inequality \eqref{k=2postestimates} becomes
\begin{equation} \label{CORk=2postestimates}
    \begin{aligned}
        \sup_{t \in [0,T]}  &\left( \sum_{k=0}^{2}  c_{k}\epsilon^{2k}\left[ E_{k}(t;V_{2}) + \int_{0}^{t}D_{k}(\tau;V_{2})~d\tau \right] \right)  \\[1ex] &\le C( E_{0}(0; V_{2}) + \epsilon^{2}E_{1}(0;V_{2}) + \epsilon^{4}E_{2}(0;V_{2})) + \frac{C}{\epsilon^{3/2}} \|V_{1}\|_{\mathcal{X}}^{2}\|V_{2}\|_{\mathcal{X}},
    \end{aligned}
\end{equation} and so we may take $T = + \infty$ which gives us a solution defined on $\mathbb{R}_{+} \times \mathbb{R}$. Then using \eqref{GNinf} we have that there exists $C > 0$ independent of time such that
\begin{equation}
       \|(v-v_{\epsilon})(t)\|_{L^{\infty}_{x}} \le  C \|(v-v_{\epsilon})(t)\|_{L^{2}_{x}}^{1/2} \|\partial_{x}(v-v_{\epsilon})(t)\|_{L^{2}_{x}}^{1/2},
    \end{equation} where the right hand-side tends to $0$ as $t \to \infty$ since $V \in L^{2}(\mathbb{R}_{+}; H^{2}(\mathbb{R}))$. This shows that $(v-v_{\epsilon})(t) \to 0$ as $t \to \infty$. Similarly, the bounds obtained in Lemma \ref{LEMMAexistence-u-u_eps} imply that \begin{equation*}
        \|(u-u_{\epsilon})(t)\|_{H^{1}_{x}} \to 0 \text{ as } t \to \infty
    \end{equation*} and so we also find that $(u-u_{\epsilon})(t) \to 0$ as $t \to \infty$, which completes the proof.

\appendix

\section{Proof that the map \texorpdfstring{$\mathcal{A}^\epsilon$}{Lg} is well defined}

\label{appendix}

We give here a proof that the map $\mathcal{A}^\epsilon$ of Section \ref{constructglobalsolution} is well defined, i.e. we show that Equation \eqref{linearsystem}:
\begin{equation*}
        \left\{
    \begin{aligned}
        &\partial_t V_{2} -\partial_x W_0 -\partial_x(\phi_\epsilon(v_\epsilon)\partial_xV_{2})=\partial_x H(\partial_xV_{1}),\\
        &V_{2}|_{t=0} = V_{0},
    \end{aligned}
    \right.
\end{equation*}
is well posed. Equation \eqref{linearsystem} rewrites as 

\begin{equation}
\label{system2}
        \left\{
    \begin{aligned}
        &\partial_t V_{2} =\partial_x(\phi_\epsilon \partial_x V_2) +f,\\
        &V_{2}|_{t=0} = V_{0},
    \end{aligned}
    \right.
\end{equation}
where $f=\partial_x H(\partial_x V_1)+\partial_x W_0$. Since the operator $\partial_x(\phi_\epsilon\partial_x\cdot)$ is uniformly elliptic, one can hope that it is the infinitesimal generator of an analytic evolution operator $(U(t,\tau))_{t,\tau\geq 0}$. The unique solution of \eqref{system2} is then given by 
\begin{equation*}
V_2(t) = U(t,0)V_0 + \int_0^t U(t,\tau)f(\tau)d\tau.
\end{equation*}
The proof that $\partial_x(\phi_\epsilon\partial_x\cdot)$ generates an analytic evolution operator is classical. Note that one has to be careful because the coefficient $\phi_\epsilon = \phi_\epsilon(t,x)$ depends on time and is unbounded. In order to simplify the situation, we take advantage of the fact that $\phi_\epsilon$ is a function of the variable $\xi:= x-st$. A change of variable $(t,x)\mapsto (t,\xi)\in (0,T)\times \mathbb{R}$ then yields that \eqref{system2} can be written as 
\begin{equation*}
        \left\{
    \begin{aligned}
        &\partial_t V_{2} =\partial_\xi(\phi_\epsilon(\xi) \partial_\xi V_2) + s\partial_\xi V_2 +f,\\
        &V_{2}|_{t=0} = V_{0},
    \end{aligned}
    \right.
\end{equation*}
Hence in these coordinates, the operator $\partial_\xi(\phi_\epsilon(\xi)\partial_\xi\cdot)+s\partial_\xi$ is independant of time. Consider the Hilbert space $H=L^2(\mathbb{R},\mathbb{C})$, and define the unbounded operator 
\begin{equation*}
T_c V:= \phi_\epsilon^{1/2}\partial_\xi V, \qquad \mathrm{with}\qquad D(T_c) = C^\infty_c(\mathbb{R}).
\end{equation*}
Let $T$ be the closure of $T_c$, i.e. 
\begin{equation*}
D(T)=  \{V\in H,\phi_\epsilon^{1/2}\partial_\xi V\in H, \exists  V_n \in C^\infty_c(\mathbb{R}), V_n \rightarrow V, \phi_\epsilon^{1/2}\partial_\xi V_n \rightarrow \phi_\epsilon^{1/2}\partial_\xi V\}.
\end{equation*} 
The adjoint $T^*$ of $T$ is given by 
\begin{equation*}
D(T^*) = \{V\in H, \partial_\xi(\phi_\epsilon^{1/2}V) \in H\},\qquad\mathrm{with} \qquad T^*V = -\partial_\xi(\phi_\epsilon^{1/2}V).
\end{equation*}
Indeed, if $V\in D(T^*)$, then $T^*V = -\partial_\xi(\phi_\epsilon^{1/2}V)$ in the sense of distributions. Conversely, if $V$ and  $\partial_\xi (\phi_\epsilon^{1/2}V)\in H$, then $<-\partial_\xi(\phi_\epsilon^{1/2}V),\psi>_H = <V, T\psi>_H$ for every $\psi\in C^\infty_c(\mathbb{R})$, and the same is true for every $\psi \in D(T)$ by density.
 
Since $T$ is closed and densely defined, $T^*T$ defined by 
\begin{equation*}
D(T^*T) = \{V\in D(T), TV \in D(T^*) \}, \qquad T^*T V = T^*(TV) =-\partial_\xi(\phi_\epsilon(\xi)\partial_x V)
\end{equation*}
 is self-adjoint and positive (see Theorem 13.13 in \cite{rudin1991functional}, Chapter 13). It follows that $-T^* T$ is the infinitesimal generator of an analytic semigroup. Now define the closed operator $B:= s\partial_\xi$, with $D(B) = H^1(\mathbb{R})$. Since there exists $\alpha > 0$ such that $\phi_\epsilon \geq \alpha$, we deduce that $D(T^* T)\subset D(T) \subset D(B)$. Furthermore, for every $\epsilon >0$, for every $V\in D(T^* T)$, one has that 
 \begin{align*}
 \|BV\| \leq \frac{s}{\alpha}\|TV\| = &\frac{s}{\alpha} |<V,T^* TV>_H|^{1/2} \leq \frac{s}{\alpha}\|V\|^{1/2}\|T^* TV\|^{1/2} \\
 \leq &\epsilon \|T^* TV\| + \frac{s^2}{4\alpha^2\epsilon}\|V\|,
 \end{align*}
by Young's inequality. By Theorem 12.37 in \cite{renardy2006introduction}, it follows that $-T^*T+B$ is the infinitesimal generator of an analytic semigroup.

Note that in fact it is possible to prove that  
\begin{equation*}
 D(T) = \{ V\in H, \phi_\epsilon^{1/2}\partial_x V \in H\}.
\end{equation*}
Indeed, let $\theta$ a smooth function such that $0\leq \theta\leq 1$, $\theta|_{[-\frac{1}{3},\frac{1}{3}]}\equiv 1$, $\mathrm{supp}(\theta)\subset[-1,1]$ and $\int_{\mathbb{R}}\theta=1$. For $V\in H$ with $\phi_\epsilon^{1/2}\partial_\xi V\in H$, define 
\begin{equation*}
V_n(\xi) := \theta\left(\frac{\xi}{n}\right) (\rho_n * V)(\xi),
\end{equation*}
where $\rho_n := n\theta(n\cdot)\in C^\infty_c(\mathbb{R})$ is an approximation of unity. Then $V_n\in C_c^\infty(\mathbb{R})$ and $V_n\rightarrow V$ in $L^2$. Furthermore,
\begin{equation*}
\phi_\epsilon^{1/2}\partial_\xi V_n = \frac{\phi_\epsilon^{1/2}}{n}\theta'\left(\frac{\xi}{n}\right) (\rho_n * V)(\xi) + \phi_\epsilon^{1/2}\theta\left(\frac{\xi}{n}\right) (\rho_n * \partial_\xi V)(\xi).
\end{equation*}
Since $\phi_\epsilon^{1/2}(\xi)\leq C(1+|\xi|^{1/2+1/(2\gamma)})$ and $\theta$ is compactly supported, 
\begin{equation*}
\left|\frac{\phi_\epsilon^{1/2}}{n}\theta'\left(\frac{\xi}{n}\right)\right|\leq Cn^{1/(2\gamma)- 1/2}\mathbf{1}_{n/3\leq |\xi|\leq n}.
\end{equation*}
Hence 
\begin{equation*}
\frac{\phi_\epsilon^{1/2}}{n}\theta'\left(\frac{\xi}{n}\right) (\rho_n * V)(\xi)\rightarrow 0 \qquad\mathrm{and}\qquad \phi_\epsilon^{1/2}\theta\left(\frac{\xi}{n}\right) (\rho_n * \partial_\xi V)(\xi)\rightarrow \phi_\epsilon^{1/2}\partial_\xi V.
\end{equation*} 
Hence $V\in D(T)$.

\section{Proof of Lemma \ref{linearoperatorestimates}}

\label{proofoflemma}
We give here the proof of Lemma \ref{linearoperatorestimates}.
\begin{proof}
We first do the computations of the estimate for $\mathcal{L}_\epsilon$. We compute that 
\begin{align*}
    \int_0^t\int_{\mathbb{R}}\mathcal{L}_\epsilon(\eta)\eta  =& \int_{0}^{t}\int_{\mathbb{R}} \frac{s\eta^{2}v_{\epsilon}'}{v_{\epsilon}-1}~dxd\tau + \int_{0}^{t}\int_{\mathbb{R}} \frac{\phi_{\epsilon}(v_{\epsilon})v_{\epsilon}'}{v_{\epsilon}-1} \eta \left( \partial_{x}\eta  + \frac{\eta v_{\epsilon}'}{v_{\epsilon}-1}\right)~dxd\tau \\
    &+ \int_{0}^{t} \int_{\mathbb{R}} \eta  \partial_{x} \left( \frac{\eta  \phi_{\epsilon}(v_{\epsilon})v_{\epsilon}'}{v_{\epsilon}-1}\right)~dxd\tau  + \int_{0}^{t} \int_{\mathbb{R}} \frac{\eta }{v_{\epsilon}-1} \partial_{x}( \phi_{\epsilon}'(v_{\epsilon})v_{\epsilon}'(v_{\epsilon}-1)\eta )~dxd\tau =:\sum_{k=1}^4 I_k.
\end{align*}
Let us fix $\alpha\in (0,1)$. For $I_1$, we exploit the smallness of $v_{\epsilon}'$ (i.e. \eqref{v'}) to get
\begin{equation*}
    |I_{1}| = \left|\int_{0}^{t}\int_{\mathbb{R}} \frac{s\eta^{2}v_{\epsilon}'}{v_{\epsilon}-1}~dxd\tau \right| \le  \left\| \frac{s v_{\epsilon}'}{\phi_{\epsilon}(v_{\epsilon}-1)^{3}}\right\|_{L^{\infty}_{t,x}}  \int_{0}^{t} \int_{\mathbb{R}} |\sqrt{\phi_{\epsilon}}\partial_{x}V |^{2} \le \frac{C}{\epsilon^{2}} \int_{0}^{t}D_{0}(\tau; V )~d\tau.
\end{equation*} Next,
\begin{equation*}
    \begin{aligned}
        |I_{2}| & = \left| \int_{0}^{t}\int_{\mathbb{R}} \frac{v_{\epsilon}'\phi_{\epsilon}}{v_{\epsilon}-1} \eta \left( \partial_{x}\eta + \frac{\eta v_{\epsilon}'}{v_{\epsilon}-1}\right)~dxd\tau  \right| \\
        &\le \int_{0}^{t} \int_{\mathbb{R}} \left| \frac{v_{\epsilon}' \phi_{\epsilon}}{v_{\epsilon}-1} \right| |\eta| |\partial_{x}\eta| + \int_{0}^{t} \int_{\mathbb{R}}  \left| \frac{v_{\epsilon}' \phi_{\epsilon}}{v_{\epsilon}-1}  \frac{v_{\epsilon}' \partial_{x}V }{(v_{\epsilon}-1)^{2}} \frac{\partial_{x}V }{v_{\epsilon}-1} \right| \\[1ex] &\le \left\| \frac{v_{\epsilon}'}{(v_{\epsilon}-1)^{2}}\right\|_{L^{\infty}_{t,x}} \| \sqrt{\phi_{\epsilon}}\partial_{x}V \|_{L^{2}_{t,x}} \|\sqrt{\phi_{\epsilon}}\partial_{x}\eta\|_{L^{2}_{t,x}} + \left\|\frac{(v_{\epsilon}')^{2}}{(v_{\epsilon}-1)^{4}}\right\|_{L^{\infty}_{t,x}} \|\sqrt{\phi_{\epsilon}} \partial_{x}V \|_{L^{2}_{t,x}}^{2} \\[1ex] &\le \frac{C}{\epsilon} \left( \int_{0}^{t}D_{0}(\tau; V )~d\tau \right)^{\frac{1}{2}}\left( \int_{0}^{t}D_{1}(\tau; V )~d\tau\right)^{\frac{1}{2}} + \frac{C}{\epsilon^{2}} \int_{0}^{t} D_{0}(\tau;V )~d\tau \\[1ex] &\le \frac{\alpha}{3} \int_{0}^{t} D_{1}(\tau; V )~d\tau + \frac{C}{\alpha\epsilon^{2}} \int_{0}^{t} D_{0}(\tau; V )~d\tau,
    \end{aligned} 
\end{equation*} 
where we used Young's inequality for the last line. Similarly,
\begin{equation*}
    \begin{aligned}
        |I_{3}| & = \left| \int_{0}^{t} \int_{\mathbb{R}} \eta \partial_{x} \left( \frac{\eta \phi_{\epsilon}(v_{\epsilon})v_{\epsilon}'}{v_{\epsilon}-1}\right)~dxd\tau \right| \\
        &\le \int_{0}^{t} \int_{\mathbb{R}} |\partial_{x}\eta| \left| \frac{\eta v_{\epsilon}'\phi_{\epsilon}}{v_{\epsilon}-1}\right| \le \left\| \frac{v_{\epsilon}'}{(v_{\epsilon}-1)^{2}} \right\|_{L^{\infty}_{t,x}} \|\sqrt{\phi_{\epsilon}}\partial_{x}\eta\|_{L^{2}_{t,x}} \|\sqrt{\phi_{\epsilon}}\partial_{x}V \|_{L^{2}_{t,x}} \\[1ex] &\le \frac{\alpha}{3} \int_{0}^{t} D_{1}(\tau;V )~d\tau + \frac{C}{\alpha\epsilon^{2}} \int_{0}^{t}D_{0}(\tau;V )~d\tau.
    \end{aligned}
\end{equation*}
To deal with the last term, we first integrate by parts to get \begin{equation*}
    I_{4} = \int_{0}^{t} \int_{\mathbb{R}} \frac{\eta}{v_{\epsilon}-1} \partial_{x}( \phi_{\epsilon}'(v_{\epsilon})v_{\epsilon}'(v_{\epsilon}-1)\eta )~dxd\tau =- \int_{0}^{t} \int_{\mathbb{R}} \frac{\partial_{x}\eta}{v_{\epsilon}-1} v_{\epsilon}' \phi_{\epsilon}' \partial_{x}V  + \int_{0}^{t} \int_{\mathbb{R}} \frac{(v_{\epsilon}')^{2} \eta}{(v_{\epsilon}-1)^{2}} \phi_{\epsilon}' \partial_{x}V .
\end{equation*} Then in the same spirit as the previous estimates and with the estimate \eqref{phiderivatives} on $\phi_\epsilon'$,
\begin{equation*}
    \begin{aligned}
        |I_{4}| &\le \left\| \frac{v_\epsilon'}{(v_\epsilon-1)^2}\right\|_{L^{\infty}_{t,x}} \|\sqrt{\phi_{\epsilon}}\partial_{x}\eta\|_{L^{2}_{t,x}} \|\sqrt{\phi_{\epsilon}}\partial_{x}V \|_{L^{2}_{t,x}} + \left\| \frac{(v_{\epsilon}')^{2}}{(v_{\epsilon}-1)^{4}}\right\|_{L^{\infty}_{t,x}}\|\sqrt{\phi_{\epsilon}}\partial_{x}V \|_{L^{2}_{t,x}}^{2} \\[1ex] &\le \frac{\alpha}{3} \int_{0}^{t} D_{1}(\tau; V )~d\tau + \frac{C}{\alpha\epsilon^{2}} \int_{0}^{t}D_{0}(\tau;V )~d\tau.
    \end{aligned} 
\end{equation*} 
We finally put all terms together to obtain that 
\begin{equation*}
    \left|\int_0^t\int_{\mathbb{R}}\mathcal{L}_\epsilon(\eta)\eta\right|\leq \sum_{k=1}^4 |I_k| \leq \alpha\int_0^t D_1(\tau;V)d\tau + \frac{C}{\alpha\epsilon^2}\int_0^t D_0(\tau; V)d\tau.
\end{equation*}

We now move on to the proof of the second estimate. Again, we fix $\alpha\in (0,1)$ and compute  that
\begin{equation}
    \begin{aligned}
        \int_{0}^{t} \int_{\mathbb{R}}\partial_{x}\eta  ~ \mathcal{C}_\epsilon (\eta ) =& -\int_{0}^{t} \int_{\mathbb{R}} \partial_{x}\phi_{\epsilon} \partial_{x}^{2}\eta  \partial_{x}\eta  +  \int_{0}^{t} \int_{\mathbb{R}} \eta  \partial_{x} \eta  \partial_{x} \left( \frac{sv_{\epsilon}'}{v_{\epsilon}-1} \right) \\[1ex] & +\int_0^t\int_{\mathbb{R}}(\partial_x \eta)^2\partial_x\left(\phi_\epsilon\frac{v_\epsilon'}{v_\epsilon-1}\right) +  \int_{0}^{t} \int_{\mathbb{R}} \eta  \partial_{x}\eta  \partial_{x}  \left(\frac{(v_{\epsilon}')^{2}\phi_{\epsilon}(v_{\epsilon})}{(v_{\epsilon}-1)^{2}}\right) \\[1ex] &+  \int_{0}^{t} \int_{\mathbb{R}} \partial_{x}\eta  \partial_{x} \left( \eta  \partial_{x}\left( \frac{v_{\epsilon}'\phi_{\epsilon}(v_{\epsilon})}{v_{\epsilon}-1}\right)\right) - \int_{0}^{t} \int_{\mathbb{R}} \partial_{x}\eta  \frac{v_{\epsilon}' \partial_{x}(\phi_{\epsilon}'(v_{\epsilon}) v_{\epsilon}'(v_{\epsilon}-1) \eta )}{(v_{\epsilon}-1)^{2}}  \\[1ex] &+ \int_{0}^{t} \int_{\mathbb{R}} \frac{\partial_{x}\eta }{v_{\epsilon}-1} \partial_{x}(\partial_{x}(v_{\epsilon}'\phi_{\epsilon}'(v_{\epsilon})(v_{\epsilon}-1))\eta ) =: \sum_{n=1}^{7}K_{n}.
    \end{aligned}
\end{equation}
We can estimate each of these terms using the same strategy as our estimates for $I_{1}-I_{4}$. This leads to
\begin{equation*}
\begin{aligned}
    &|K_{1}|\leq \left\|\frac{\partial_x\phi_\epsilon}{\phi_\epsilon}\right\|_{L^\infty_{t,x}}\|\sqrt{\phi_\epsilon}\partial_x^2\eta\|_{L^2_{t,x}}\|\sqrt{\phi_\epsilon}\partial_x\eta\|_{L^2_{t,x}} \le  \frac{\alpha}{6}  \int_{0}^{t} D_{2}(\tau;V )~d\tau + \frac{C}{\alpha\epsilon^{2}} \int_{0}^{t} D_{1}(\tau;V )~d\tau, 
    \\[1ex]&|K_{2}| =\left|\int_0^t\int_{\mathbb{R}}\frac{sv_\epsilon'}{v_\epsilon-1}\left((\partial_x\eta)^2+\eta\partial_x^2\eta\right)\right|\le \frac{\alpha}{6}  \int_{0}^{t} D_{2}(\tau;V )~d\tau + \frac{C}{\epsilon^{2}} \int_{0}^{t} D_{1}(\tau;V )~d\tau  + \frac{C}{\alpha\epsilon^{4}} \int_{0}^{t}D_{0}(\tau;V )~d\tau, 
    \\[1ex] &|K_3| =2\left|\int_0^t \phi_\epsilon \frac{v_\epsilon'}{v_\epsilon-1} \partial_x^2\eta \partial_x\eta \right|\leq \frac{\alpha}{6} \int_{0}^{t} D_{2}(\tau;V )~d\tau +\frac{C}{\alpha\epsilon^2}\int_0^tD_1(\tau;V)d\tau,
    \\[1ex] &|K_{4} | = \left|\int_0^t\int_{\mathbb{R}}\frac{(v_\epsilon')^2\phi_\epsilon}{(v_\epsilon-1)^2}\left(\eta\partial_x^2\eta+(\partial_x\eta)^2\right)\right| \le \frac{\alpha}{6} \int_{0}^{t} D_{2}(\tau;V )~d\tau + \frac{C}{\epsilon^{2}}\int_{0}^{t} D_{1}(\tau;V )~d\tau + \frac{C}{\epsilon^{4}} \int_{0}^{t}D_{0}(\tau;V )~d\tau, 
    \\[1ex]
    &|K_{5}|=\left|\int_0^t\int_{\mathbb{R}}\partial_x\left(\phi_\epsilon \frac{v_\epsilon'}{v_\epsilon-1}\right) \eta \partial_x^2\eta \right|,
\end{aligned}
\end{equation*} 
and we compute with the estimate \eqref{phiderivatives} and Lemma \ref{vepsestimate} that 
\begin{equation*}
    \left|\partial_x\left(\phi_\epsilon \frac{v_\epsilon'}{v_\epsilon-1}\right)\right| \leq \frac{C}{\epsilon^2}\phi_\epsilon (v_\epsilon-1)^{2\gamma} \leq \frac{C}{\epsilon^2}\phi_\epsilon,
\end{equation*}
hence 
\begin{equation*}
    |K_5|\leq  \frac{\alpha}{6}  \int_{0}^{t} D_{2}(\tau;V )~d \tau+ \frac{C}{\alpha\epsilon^{4}} \int_{0}^{t}D_{0}(\tau;V )~d\tau.
\end{equation*}
Next,
\begin{equation*}
    K_6 = - \int_{0}^{t} \int_{\mathbb{R}} (\partial_{x}\eta)^2  \frac{v_{\epsilon}' \phi_{\epsilon}'(v_{\epsilon}) v_{\epsilon}'(v_{\epsilon}-1) }{(v_{\epsilon}-1)^{2}} -\int_{0}^{t} \int_{\mathbb{R}} \eta \partial_{x}\eta  \frac{v_{\epsilon}' \partial_{x}(\phi_{\epsilon}'(v_{\epsilon}) v_{\epsilon}'(v_{\epsilon}-1) )}{(v_{\epsilon}-1)^{2}}.
\end{equation*}
With the estimate \eqref{phiderivatives} and Lemma \ref{vepsestimate}, we see that 
\begin{equation}
\label{auxiliary}
    \left|\partial_x[\phi_\epsilon' v_\epsilon'(v_\epsilon-1)]\right|\leq \frac{C}{\epsilon}(v_\epsilon-1)^\gamma.
\end{equation}
Hence 
\begin{equation*}
    |K_6| \leq  \frac{C}{\epsilon^{2}}\int_{0}^{t} D_{1}(\tau;V )~d\tau + \frac{C}{\epsilon^{4}} \int_{0}^{t}D_{0}(\tau;V )~d\tau .
\end{equation*}
To estimate $K_{7}$, we first integrate by parts to obtain
\begin{equation*}
    \begin{aligned}
        K_{7} =& - \int_{0}^{t} \int_{\mathbb{R}} \frac{\eta\partial_{x}^{2}\eta }{v_{\epsilon}-1} \partial_{x}[\phi_\epsilon' v_\epsilon'(v_\epsilon-1)] + \int_{0}^{t} \int_{\mathbb{R}} \frac{\eta_2\partial_{x}\eta }{(v_{\epsilon}-1)^{2}} v_{\epsilon}' \partial_{x}[\phi_\epsilon' v_\epsilon'(v_\epsilon-1)].
    \end{aligned}
\end{equation*} 

Then using again \eqref{auxiliary} and the Holder and Young inequalities,
\begin{equation}
\begin{aligned}
    |K_{7}| &\le \frac{C}{\epsilon}\|\sqrt{\phi_{\epsilon}}\partial_{x}V \|_{L^{2}_{t,x}} \left(\left\|\frac{(v_\epsilon-1)^{\gamma-2}}{\phi_\epsilon}\right\|_{L^\infty_{t,x}}\|\sqrt{\phi_{\epsilon}}\partial_{x}^{2}\eta \|_{L^{2}_{t,x}}  + \left\|\frac{v_\epsilon'(v_\epsilon-1)^{\gamma-3}}{\phi_\epsilon}\right\|_{L^\infty_{t,x}} \|\sqrt{\phi_{\epsilon}}\partial_{x}\eta \|_{L^{2}_{t,x}}\right) \\[1ex] 
    &\le \frac{C}{\epsilon^{2}}\int_{0}^{t} D_{1}(\tau;V )~d\tau + \frac{C}{\alpha\epsilon^{4}} \int_{0}^{t}D_{0}(\tau;V )~d\tau + \frac{\alpha}{6} \int_{0}^{t} D_{2}(\tau;V )~d\tau.
\end{aligned}
\end{equation} 
Summing $|K_1|,\dots, |K_7|$ finally yields the desired estimate.

\end{proof}

\bibliographystyle{abbrv}
\bibliography{ref}

\end{document}